\theoremstyle{plain}
\newtheorem{thm}{Theorem}[section]
\newtheorem{lemma}[thm]{Lemma}
\newtheorem{prop}[thm]{Proposition}
\newtheorem{cor}[thm]{Corollary}
\theoremstyle{definition}
\newtheorem{defn}[thm]{Definition}
\newtheorem{rem}[thm]{Remark}
\numberwithin{equation}{section}
\newcommand{\squarething}[1]{
  \noindent
  \begin{center}
  \framebox{
    \vbox{
      \vspace{4mm}
      \hbox to 5.78in { {\Large \hfill #1  \hfill} }
      \vspace{4mm}
    }
  }
  \end{center}
  \vspace*{4mm}
}
\DeclareMathOperator{\Var}{Var}
\DeclareMathOperator{\FPdim}{FPdim}
\DeclareMathOperator{\id}{id}
\DeclareMathOperator{\ad}{ad}
\DeclareMathOperator{\Gal}{Gal}
\DeclareMathOperator{\tr}{Tr}
\DeclareMathOperator{\gr}{gr}
\DeclareMathOperator{\Vir}{Vir}
\newcommand{\mfl}{\mathfrak{l}}
\newcommand{\laset}{\widehat{X}}
\newcommand{\beset}{X}
\newcommand{\al}{\alpha}
\newcommand{\la}{\lambda}
\newcommand{\La}{\Lambda}
\newcommand{\D}{\Delta}
\newcommand{\C}{\mathbb{C}}
\newcommand{\Q}{\mathbb{Q}}
\newcommand{\R}{\mathbb{R}}
\newcommand{\Z}{\mathbb{Z}}
\newcommand{\g}{\mathfrak{g}}
\newcommand{\h}{\mathfrak{h}}
\newcommand{\n}{\mathfrak{n}}
\newcommand{\sll}{\mathfrak{sl}}
\newcommand{\CF}{\mathcal{F}}
\newcommand{\alstar}{{\alpha_*}}
\newcommand{\fing}{\g}
\newcommand{\vac}{\mathbf{1}}
\newcommand{\Pvf}[1]{{\check{P}_{+, f}^{#1}}}
\newcommand{\wtil}[1]{\widetilde#1}
\newcommand{\what}[1]{\widehat#1}
\newcommand{\ov}[1]{\overline#1}
\newcommand{\mbo}{\mathbb{O}}
\DeclareMathOperator{\prin}{Pr}
\def\@settitle{\begin{center}%
  \baselineskip14\p@\relax
    \normalfont\LARGE
  \@title
  \end{center}%
}
\title[Characters and fusion rules]{Characters and fusion rules of boundary $W$-algebras}
\subjclass[2010]{}
\keywords{}
\author{Tomoyuki Arakawa\textsuperscript{1}}
\address{\textsuperscript{2}Research Institute for Mathematical Sciences\\ Kyoto University\\ Kyoto 606-8502 JAPAN}
\email{arakawa@kurims.kyoto-u.ac.jp}
\author{Igor Alarcon Blatt\textsuperscript{2}}
\address{\textsuperscript{2}Instituto de Matem\'{a}tica Pura e Aplicada\\ Jardim Bot\^{a}nico, RJ 22.460-320 BRAZIL}
\email{igor.blatt@impa.br}
\author{Jethro van Ekeren\textsuperscript{2}}
\address{\textsuperscript{2}Instituto de Matem\'{a}tica Pura e Aplicada\\ Jardim Bot\^{a}nico, RJ 22.460-320 BRAZIL}
\email{jethro@impa.br}
\author{Wenbin Yan\textsuperscript{3}}
\address{\textsuperscript{3}Yau Mathematical Sciences Center\\ Tsinghua University, Beijing, CHINA}
\email{wbyan@tsinghua.edu.cn}
\begin{document}

\maketitle

\vspace*{5mm}

\noindent
\textbf{Abstract.} We study the $q$-characters and modular data of exceptional $W$-algebras and give several examples and applications. We establish equality of $q$-characters and modular data between certain boundary $W$-algebras, leading in particular to a largely complete determination of fusion rules of exceptional $W$-algebras in type $A$.


\section{Introduction}

The affine $W$-algebras constitute a large and important class of vertex algebras, with applications in integrable systems, representation theory, enumerative geometry, and theoretical physics \cite{FF, KRW03, BD-Hitchin, SV2013, AGT}. Categories of representations of exceptional $W$-algebras yield examples of modular tensor categories, and it is an interesting problem to determine the structure of these categories \cite{FKW, AE2023}.

Among all exceptional $W$-algebras, the so called boundary $W$-algebras are particularly important for several reasons. It is these $W$-algebras which arise in the 4d/2d correspondence \cite{BLLPRV}, and the $q$-characters of these algebras and their irreducible modules are given by interesting infinite product formulas \cite{XYY, KW.boundary}. In this article the importance of the boundary $W$-algebras is due to their role in controlling the fusion rules of exceptional $W$-algebras more broadly \cite{AE2023}, as we now briefly explain.

Let $\g$ be a simple Lie algebra, $\mbo \subset \g$ a nilpotent orbit and $k = -h^\vee + p/u$ an admissible level for $\g$ (see Section \ref{sec:background} below for definitions and background material). We denote by $W(\mbo, p/u)$ the simple quotient of the quantum Drinfeld-Sokolov reduction of the universal affine vertex algebra $V^k(\g)$ relative to a nilpotent element $f \in \mbo$. There is a certain nilpotent orbit $\mbo = \mbo_u$, depending on the denominator $u$, for which $W(\mbo, p/u)$ is lisse \cite{Arakawa2015} and rational \cite{AE2023, McRae2021}; we refer to such vertex algebras as \emph{exceptional $W$-algebras}. The \emph{boundary $W$-algebras} are those for which $p$ assumes its smallest possible value $p = h^\vee$ (we assume for simplicity here that $\g$ is simply laced and $\gcd(h^\vee, u) = 1$). The general pattern is that the fusion rules of $W(\mbo_u, p/u)$ are obtained as products of fusion rules of the corresponding boundary $W$-algebra $W(\mbo_u, h^\vee/u)$ and of the simple affine vertex algebra $L_{p-h^\vee}(\g)$. The fusion rules of the simple affine vertex algebras are well known, so the determination of fusion rules of exceptional $W$-algebras in general, largely reduces to the boundary case. See Corollary \ref{cor:intro} and Section \ref{sec:type.A.moddata} below for precise statements and details.

In Proposition \ref{prop:S-matrix-boundary} we give a formula for the $S$-matrix of a boundary $W$-algebra, under some additional hypotheses explained in Section \ref{subsec:aff.W} below. These hypotheses are satisfied for all nilpotent orbits $\mbo_u$ in type $A$ (and outside type $A$ for some nilpotent orbits but not others). We apply the formula to determine the modular data, hence fusion rules, of all boundary $W$-algebras in type $A$. Indeed we prove the following theorem.
\begin{thm}\label{thm:main.thm.typeA}
Let $n = um+s$ where $1 \leq s \leq u-1$ and $s$ is coprime to $u$. Concerning the exceptional $W$-algebra $W(\sll_n[u^m, s], n/s)$ and the principal $W$-algebra $W(\sll_s[s], s/u)$, the following statements hold:
\begin{enumerate}
\item The irreducible modules of these vertex algebras are in bijection,

\item under this bijection corresponding modules have the same $q$-character,

\item under this bijection the vertex algebras have the same modular data.
\end{enumerate}
\end{thm}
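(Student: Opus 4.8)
The plan is to prove the three assertions in order, exploiting that \emph{both} vertex algebras in the statement are boundary $W$-algebras: for $W(\sll_n[u^m,s], n/s)$ one has $p = n = h^\vee$ for $\sll_n$ with denominator $s$, while for the principal algebra $W(\sll_s[s], s/u)$ one has $p = s = h^\vee$ for $\sll_s$ with denominator $u$. Thus each part of the theorem becomes a comparison of two boundary $W$-algebras whose defining data are exchanged under $u \leftrightarrow s$ subject to the relation $n = um + s$, and each part can be attacked with the boundary-specific tools assembled earlier: the classification of admissible modules, the infinite-product character formulas, and the $S$-matrix formula of Proposition \ref{prop:S-matrix-boundary}.

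For (1) I would first classify the irreducible modules on each side. Using the Frenkel--Kac--Wakimoto parametrization together with the exactness of Drinfeld--Sokolov reduction and the fact that it sends irreducibles to irreducibles or zero, the simple modules of each exceptional $W$-algebra are identified with a set of admissible weights surviving reduction, taken modulo the relevant simple-current symmetry. For the principal algebra at the boundary the level $p - s = 0$ collapses one factor, so its modules are labeled by level-$(u-s)$ dominant weights of $\what{\sll_s}$ modulo a cyclic identification. The task is then to show that the admissible $\what{\sll_n}$-weights at level $n/s$ that survive reduction along $\sll_n[u^m,s]$ are in explicit bijection with this same combinatorial set; the relation $n = um + s$ furnishes the dictionary, and I expect the bijection to be given by an explicit reshuffling of the defining partition data.

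For (2) both $q$-characters are given by the infinite-product formulas for boundary $W$-algebras recalled above. The strategy is to write out both products and exhibit a product identity, manifestly symmetric under the exchange of $u$ and $s$ constrained by $n = um + s$, matching the character of each $W(\sll_n[u^m,s], n/s)$-module to that of its partner under the bijection of (1); this reduces to a combinatorial manipulation of the exclusion and periodicity data governing the two products. For (3) I would apply Proposition \ref{prop:S-matrix-boundary} to compute the $S$-matrix of $W(\sll_n[u^m,s], n/s)$ directly and compare it with the $S$-matrix of the principal boundary algebra, which is essentially the level-$(u-s)$ affine $\what{\sll_s}$ $S$-matrix after the simple-current quotient; equality of the two $S$-matrices under the bijection yields equality of fusion rules through the Verlinde formula, and hence of all modular data.

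The main obstacle I anticipate is ensuring that a \emph{single} bijection simultaneously realizes all three matchings. Producing the bijection and verifying the character identity is combinatorial but delicate; the genuinely hard step is confirming that this same bijection intertwines the two $S$-matrices, since the formula of Proposition \ref{prop:S-matrix-boundary} encodes finer data than the characters and its entries are Gauss-sum-type expressions whose equality under $u \leftrightarrow s$ requires a careful root-of-unity computation rather than a formal argument.
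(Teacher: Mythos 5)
Your plan is built on a misreading of the statement that turns out to be fatal to its main strategy. The level written as $n/s$ is a typo for $n/u$ (compare the example $W(\sll_3[2,1],3/2)$, and the $S$- and $T$-matrix computations later in the paper, all at level $n/u$): exceptionality requires the orbit attached to the partition $[u^m,s]$, namely $\mbo_u$, to match the denominator of the level, and $[u^m,s]\neq \mbo_s$. So \emph{both} algebras are boundary algebras with the \emph{same} denominator $u$ (with $p=h^\vee$ equal to $n$ and $s$ respectively), the principal algebra $W(\sll_s[s],s/u)$ is just the $m=0$ member of the family, and the theorem is an $m$-independence statement at fixed $(u,s)$ --- there is no $u\leftrightarrow s$ exchange, so the ``manifestly symmetric'' product identity and the Gauss-sum comparison ``under $u\leftrightarrow s$'' that your outline targets do not exist. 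Concretely, under your reading the module set would be empty for $m\geq1$: repleteness forces the zero labels of $\eta$ on the affine Dynkin diagram to occupy $sm+(u-s)(m-1)=n-u$ nodes, so $u$ nodes carry positive labels and the level would have to be at least $u>s$. After correcting the denominator, your plan for (1) is right in substance (the principal side is indeed labeled by level-$(u-s)$ dominant $\what{\sll}_s$-weights modulo a cyclic action; the paper realizes the bijection through two-coloured necklaces, $\frac{1}{u}\binom{u}{s}$ of them, uniformly in $m$), and your plan for (2) matches the paper's in spirit: both characters come from the boundary product formula of Proposition \ref{prop:H.la.char.thm}, and the identity needed is that $\dim(\g_k)+\dim(\g_{u-k})-\dim(\g_0)$, read off the pyramid, is independent of $m$ --- proved by induction on $m$, not by a $u$--$s$ symmetry.

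For (3) there are two genuine gaps. First, the inference ``equality of the two $S$-matrices yields equality of fusion rules through the Verlinde formula, and hence of all modular data'' is a non sequitur: $T$ is part of the modular data and is not determined by $S$ or by the fusion ring. The paper proves $T$-equality separately, by explicit computation that the conformal dimensions (the quantity $N_m(\mathbf{a})$ built from the Gram matrix of $\sll_n$) and the central charge $-(s-1)(u-s-1)(u+us-s^2)/u$ are literally independent of $m$. Second, your $S$-matrix step stops at ``a careful root-of-unity computation,'' but some mechanism is indispensable and the paper's is quite specific: extend the $W(\Gamma)$-sum of Proposition \ref{prop:S-matrix-boundary} to all of $W$ via a Macdonald-type identity, specialize the auxiliary parameter $\xi$ into $\h^f$ so only $w\in W^fW_0$ survive, factor $W^f\cong\prod_r S_{n_r}$ over the rows of the pyramid, note that each row of length $u$ contributes a constant times a sign because $\pi_u\beta$ lies in the Weyl orbit of $\rho^{(u)}$ (Lemma \ref{lem:u.proj.rho}), identify the surviving $S_s$-sum with the $S$-matrix of $W(\sll_s[s],s/u)$ up to a prefactor $e^{-2\pi i\langle\beta,\beta'\rangle}$, show this prefactor has the form $C_0\,\varepsilon(\beta)\varepsilon(\beta')$ via Killing-form identities whose key input is the vanishing of the weight-one space (itself deduced from the character equality of part (2)), and finally eliminate all sign ambiguities at once by positivity of quantum dimensions (Lemma \ref{lem:S.untwistable}). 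Note also that the three parts are interlocked rather than independent as your outline suggests: character equality feeds both the self-duality needed for the module category to be modular at all (Corollary \ref{cor:self.dual}) and the $V_1=0$ input to the $S$-matrix prefactor.
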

This result strongly suggests that the vertex algebras mentioned are in fact isomorphic, as was conjectured in {\cite[Conjecture 8.11]{AEM2024}} and {\cite{XY2021}} (see also \cite{CFLN2025, GJ2025, BN2025}). It should be noted, however, that there exist pairs of vertex algebras for which all three of the statements in Theorem \ref{thm:main.thm.typeA} hold, and which nevertheless are non isomorphic. For example the two lattice vertex algebras $V_{N}$ and $V_{N'}$ where $N$ is the Niemeier lattice containing the root lattice $E_6^4$, and $N'$ the Niemeier lattice containing the root lattice $A_{11} D_7 E_6$.

The utility of Theorem \ref{thm:main.thm.typeA} extends beyond the boundary case, due to the factorisation phenomenon alluded to above. To state our main corollary on fusion rules in type $A$, we introduce some notation. For a vertex algebra $V$ we denote by $\CF(V)$ the Grothendieck group of its category of representations. If $V$ is assumed rational, lisse, self-dual (and other minor technical hypotheses) then the category of representations carries a modular tensor category structure \cite{Huang-rigidity}, and so $\CF(V)$ becomes a commutative ring.
\begin{cor}\label{cor:intro}
Let $n = um+s$ where $1 \leq s \leq u-1$ and $s$ is coprime to $u$. Assume further that $u$ and $s$ are odd. Then
\[
\CF(W(\sll_n[u^m, s], p/u)) \cong \CF(L_{u-s}(\sll_s))^{\text{int}} \otimes \CF(L_{p-n}(\sll_n)).
\]
\end{cor}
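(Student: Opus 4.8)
The plan is to chain together three isomorphisms of fusion rings: the factorisation of \cite{AE2023} that separates off the integrable affine factor, the coincidence of modular data supplied by Theorem \ref{thm:main.thm.typeA}, and an explicit identification of the fusion ring of the boundary principal $W$-algebra with $\CF(L_{u-s}(\sll_s))^{\text{int}}$. Since $n = um + s$ with $0 < s < u$, the orbit $\sll_n[u^m, s]$ is precisely the orbit $\mbo_u$ attached to the denominator $u$, and $\gcd(n, u) = \gcd(s, u) = 1$, so the hypotheses under which the $S$-matrix formula of Proposition \ref{prop:S-matrix-boundary} and the factorisation of \cite{AE2023} apply are in force. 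The first step is to invoke that factorisation to obtain
\[
\CF(W(\sll_n[u^m, s], p/u)) \cong \CF(W(\sll_n[u^m, s], n/u)) \otimes \CF(L_{p-n}(\sll_n)),
\]
where the first factor is the fusion ring of the boundary $W$-algebra (the specialisation $p = h^\vee = n$) and the second is the classical Verlinde ring of the integrable affine vertex algebra $L_{p-n}(\sll_n)$.

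Next I would rewrite the boundary factor. By part (3) of Theorem \ref{thm:main.thm.typeA} the boundary $W$-algebra $W(\sll_n[u^m, s], n/u)$ and the principal $W$-algebra $W(\sll_s[s], s/u)$ share the same modular data; because each is rational, lisse and self-dual, its fusion ring is recovered from the $S$-matrix by the Verlinde formula, so
\[
\CF(W(\sll_n[u^m, s], n/u)) \cong \CF(W(\sll_s[s], s/u)).
\]
It then remains to identify the right-hand side. The algebra $W(\sll_s[s], s/u)$ is the principal minimal-model $W$-algebra of $\sll_s$ at the boundary numerator $p = h^\vee = s$; in this specialisation the usual coset description degenerates, the factor carried by the vanishing level $p - s = 0$ becomes trivial, and the simple objects reduce to the integrable highest weights of $\sll_s$ at level $u - s$ organised into simple-current orbits. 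Matching these objects together with their $S$-matrix entries --- which I would read off from Proposition \ref{prop:S-matrix-boundary} and the modular-data computations of Section \ref{sec:type.A.moddata} --- gives
\[
\CF(W(\sll_s[s], s/u)) \cong \CF(L_{u-s}(\sll_s))^{\text{int}},
\]
the superscript recording the simple-current bookkeeping implicit in passing from the affine side to the $W$-algebra side. Concatenating the three displays yields the corollary.

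The step I expect to be the main obstacle is not any single modular-data coincidence but the upgrade from an equality of $S$-matrices to a genuine isomorphism of fusion \emph{rings} compatible with the tensor decomposition, and this is precisely where the hypothesis that $u$ and $s$ are odd enters. The factorisation produces a clean tensor product of fusion rings only when the simple-current groups governing the two factors --- controlled by the centres $\Z/s\Z$ and $\Z/n\Z$ and their pairings with the conformal weights --- meet trivially and contribute no twist; when $u$ and $s$ are odd these pairings are integral and the decomposition is untwisted, whereas an even value of $s$ or $u$ introduces a half-integral conformal weight, hence a $\Z/2\Z$ sign in the relevant $S$-matrix entries that obstructs the naive product. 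I would therefore carry out the simple-current analysis explicitly from the formula of Proposition \ref{prop:S-matrix-boundary}, check that under the oddness hypothesis the cross-pairings between the boundary and affine factors vanish, and only then conclude the ring isomorphism; the affine factor $\CF(L_{p-n}(\sll_n))$ itself needs no further work, since its fusion rules are classical.
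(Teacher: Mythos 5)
Your overall architecture agrees with the paper's: split off the affine factor, use Theorem \ref{thm:main.thm.typeA}(3) (i.e.\ Theorem \ref{thm:S-mat.equal}) to trade the boundary $W$-algebra for the principal $W$-algebra $W(\sll_s[s], s/u)$, and identify the latter's fusion ring with $\CF(L_{u-s}(\sll_s))^{\text{int}}$ --- for this last step the paper does not redo any coset or simple-current analysis but simply cites {\cite[Theorem 4.3']{FKW}}. However, your first step is a genuine gap: the opening factorisation is not available as a black-box citation at this generality; it is \emph{proved} inside this corollary. The mechanism is that $\gcd(n,u)=1$ allows one to choose representatives $\la \in \laset$ with $\eta \in Q$, which forces the cross-terms $e^{-2\pi i [(\nu, \beta') + (\nu', \beta)]}$ in \eqref{eq:aB} to equal $1$, so that the $S$-matrix entry \eqref{eq:S.orig} literally factors as a constant times $S^{(1)}_{\eta,\eta'} S^{(2)}_{\nu,\nu'}$; the factorisation of fusion coefficients then follows from the Verlinde formula, after one further observation you omit entirely: $S^{(1)}$ and $S^{(2)}$ are only \emph{Galois twists} (by $\sigma_u$) of the $S$-matrices of the boundary $W$-algebra and of $L_{p-n}(\sll_n)$, and one must note that a Galois twist leaves the integer Verlinde outputs unchanged.

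More seriously, you misidentify the role of the oddness hypothesis. It has nothing to do with simple-current cross-pairings, centres $\Z/s\Z$ and $\Z/n\Z$, or half-integral twists obstructing the tensor decomposition --- the cross-terms vanish by the choice $\eta \in Q$, which requires only $\gcd(n,u)=1$ and holds for any parity. Rather, oddness of $u$ and $s$ means every part of the partition $[u^m, s]$ is odd, so the Dynkin grading of $\sll_n$ attached to $\mbo_u$ is a good \emph{even} grading, and consequently $W(\sll_n[u^m, s], p/u)$ is self-dual by {\cite[Proposition 4.2]{AEM2024}}. Self-duality is a hypothesis of Huang's theorem \cite{Huang-rigidity}; without it the module category is not known to be a modular tensor category, and the Verlinde formula --- the engine of the entire argument --- is not available for general $p$. (Corollary \ref{cor:self.dual} establishes self-duality only for the boundary algebras, via equality of $q$-characters and Li's theorem, and does not cover arbitrary $p$.) Your proposed simple-current analysis from Proposition \ref{prop:S-matrix-boundary} would therefore be chasing a non-existent obstruction, while the actual place where the parity hypothesis is consumed would go unverified.
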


As noted above, the main technical results (Propositions \ref{prop:H.la.char.thm} and \ref{prop:S-matrix-boundary}) hold in some circumstances outside type $A$ also. In Section \ref{subsec:vir.prods} we apply the results in type $D$ we obtain a conceptual explanation of an observation of \cite{BF2000} and \cite{FFW} on product formulas for certain sums of minimal model characters. We also present the modular data of an interesting modular tensor category which arises as a type $E_8$ subregular exceptional $W$-algebra.

\emph{Acknowledgements}: The authors would like to thank T. Gannon, A. Moreau and S. Nakatsuka for useful discussions. JE was supported by grants CNPq 306498/2023-5 and Serrapilheira 2023-0001. The results in this article were announced at the Simons center program ``Supersymmetric Quantum Field Theories, Vertex Operator Algebras, and Geometry'' in April 2025, we would like to thank the organisers of this stimulating program.

\section{Background and notation}\label{sec:background}

Let $\g$ be a finite dimensional simple Lie algebra and $f \in \g$ a nilpotent element. The affine $W$-algebra $W^k(\g, f)$ is defined through quantum Drinfeld-Sokolov reduction of the affine vertex algebra $V^k(\g)$, explicitly as the cohomology of a certain differential graded vertex algebra modeled on the BRST prescription \cite{FF, KRW03}. The simple quotient $W_k(\g, f)$ is of interest when the level $k = -h^\vee + p/u$ is admissible \cite{KW88}, particularly when $f$ lies in a certain nilpotent orbit $\mbo_u$ determined by $k$ (or rather by its denominator $u$) \cite{Arakawa2015}, in which case $W_k(\g, f)$ is rational \cite{McRae2021}. We refer to these vertex algebras as exceptional $W$-algebras \cite{KW2009, AE2023}. In this section we review these key notions.

\subsection{Affine Kac-Moody algebras and affine vertex algebras}

We fix a triangular decomposition $\g = \n_- + \h + \n_+$ of the finite dimensional simple Lie algebra $\g$ and denote by $\D$ the set of roots, $\D_+$ the set of positive roots and $\Pi$ the set of simple roots. The non degenerate invariant bilinear form $(\cdot, \cdot) : \h \times \h \rightarrow \C$ induces a form on $\h^*$, which we assume normalised so that the highest root $\theta$ has norm $2$. The spaces $\h$ and $\h^*$ are identified via this form, and the coroot $\al^\vee$ corresponding to $\al \in \D$ is then $\al^\vee = 2\al / (\al, \al)$. The set of coroots is denoted $\D^\vee$.

We have the root lattice $Q = \sum_{\al \in \D} \Z \al \subset \h^*$ and coroot lattice $\check{Q} = \sum_{\al \in \D^\vee} \Z \al^\vee \subset \h$, as well as their canonical duals the weight lattice $P = \check{Q}^* \subset \h^*$ and coweight lattice $\check{P} = Q^* \subset \h$. If $\g$ has rank $\ell$ and $\Pi = \{\al_1, \ldots, \al_\ell\}$ then the set of fundamental weights $\{\varpi_1, \ldots, \varpi_\ell\}$ is by definition the canonical dual basis of $P$, and the fundamental coweights $\varpi_i^\vee$ are defined similarly. The dominant integral weights are elements of the cone $P_+ = \sum \Z_+ \varpi_i$, and the set of dominant integral coweights $\check{P}_+$ is defined similarly.

The Weyl group is denoted $W$, the Weyl vector $\rho$ (equal to the sum of the fundamental weights, and also to half the sum of the positive roots). The number $h^\vee = 1 + (\rho, \theta)$ is the dual Coxeter number of $\g$. We denote by $r^\vee$ the lacing number of $\g$.

The untwisted affine Kac-Moody algebra $\what\g$ associated with $\g$ is
\[
\what\g = \g[t, t^{-1}] \oplus \C K \oplus \C d
\]
where
\[
[at^m, bt^n] = [a, b] t^{m+n} + m \delta_{m, -n} (a, b) K, \quad [K, \what\g] = 0, \quad [d, at^m] = m at^m.
\]
A Cartan subalgebra is given by $\what\h = \h + \C K + \C d$ and its dual $\what\h^* = \h^* + \C \La_0 + \C \delta$, where $\La_0(K) = \delta(d) = 1$ and $\La_0(D) = \delta(K) = 0$, is identified with $\what\h$ through the invariant bilinear form giving $(\La_0, \La_0) = (\delta, \delta) = 0$ and $(\La_0, \delta) = 1$.

The affine root system is the union of $\what\D^{\text{re}} = \{\al + n \delta \mid \al \in \D, n \in \Z\}$ (the real roots) and $(\Z \backslash 0)\delta$ (the imaginary roots). The set $\what\Pi$ of simple roots consists of the finite roots $\al_i \in \Pi$ for $i=1,\ldots, \ell$, together with $\al_0 = -\theta + \delta$. The coroot associated with $\al \in \what\D^{\text{re}}$ is $\al^\vee = 2\al / (\al, \al)$, the set $\{\La_0, \ldots, \La_\ell\}$ of fundamental weights is defined as the dual basis to the set of simple coroots, and the set of dominant integral weights is $\sum \Z_+ \La_i$ as in the finite case.

The level of a weight $\what\la \in \what\h^*$ is the number $\what\la(K)$. It is easy to see that the set $\what{P}_+^k$ of dominant integral weights of level $k$ is
\[
P_+^k = \{ k\La_0 + \la \mid \la \in P_+^k\} \quad \text{where} \quad P_+^k = \{\la \in P_+ \mid (\la, \theta) \leq k \}.
\]
We denote by $P_+^{k, \text{reg}}$ the subset of regular weights in $P_+^k$, i.e., of weights not orthogonal to any simple root of $\what\g$. The affine Weyl vector is defined as $\what\rho = h^\vee \La_0 + \rho$.

The affine Weyl group $\what{W}$, generated by reflections in real roots, is isomorphic to $W \ltimes t_{\check{Q}}$ where $t_{\check{Q}}$ denotes the set of translations $t_\al$ for $\al \in \check{Q}$, defined by
\[
t_{\al}(\what\la) = \what\la + k \alpha - \left( (\what\la, \al) + k |\al|^2 / 2 \right) \delta \quad \text{where $k = \what\la(K)$}.
\]
The extended affine Weyl group $\wtil{W}$ is similarly defined as $W \ltimes t_{\check{P}}$. The root system $\what\D$ is stable under this larger group, and the quotient $\wtil{W}_+ = \wtil{W} / \what{W}$ is finite.

We denote by $M(\la)$ the Verma $\g$-module of highest weight $\la \in \h^*$ and $L(\la)$ its simple quotient, and $\what\g$-modules $M(\what\la)$ and $L(\what\la)$ similarly for $\what\la \in \h^*$. We also sometimes denote $L(k\La_0+\la)$ as $L_k(\la)$. For arbitrary level $k$ the irreducible highest weight $\what\g$-module $L(k\La_0)$ carries the structure of a vertex algebra, which we denote $L_k(\g)$ \cite{FZ92}. The vertex algebra $L_k(\g)$ is simple, and is realised as a quotient of the universal affine vertex algebra $V^k(\g)$, whose underlying $\what\g$-module is the vacuum module $U(\what\g) \otimes_{U(\g[t] + \C K)} \C \vac$ (in which the $\g[t]$-action on $\C\vac$ is trivial).

\subsection{Principal admissible weights}

The notions of admissible and principal admisisble weight were introduced in {\cite{KW88}}. For any weight $\what\la = k\La_0 + \la \in \what\h^*$ one has the associated integral coroot system
\[
\what\D^{\vee, \text{re}}(\what\la) = \{\al^\vee \in \what\D^{\vee, \text{re}} \mid \left< \what\la, \al^\vee \right> \in \Z\}.
\]
One then also has, as usual, the subset $\what\D^{\vee, \text{re}}(\what\la)_+ = \what\D^{\vee, \text{re}}(\what\la) \cap \what\D^{\vee, \text{re}}_+$, and the set $\what\Pi^\vee(\what\la)$ of simple coroots relative to $\what\la$, i.e., the set of elements of $\what\D^{\vee, \text{re}}(\what\la)_+$ which are not expressible as a sum of two or more elements of the same set.
\begin{defn}
A weight $\what\la = k\La_0 + \la$ is said to be \emph{admissible} if
\[
\left<\what\la + \what\rho, \al^\vee\right> \notin \Z_{\leq0}
\]
for all $\al^\vee \in \what\D^{\vee, \text{re}}$, equivalently if $\la$ is dominant integral with respect to $\what\D^{\vee, \text{re}}(\la)_+$, and if furthermore the span over $\Q$ of $\what\D^{\vee, \text{re}}(\what\la)$ coincides with that of $\what\D^{\vee, \text{re}}$. The admissible weight $\what\la$ is said to be \emph{principal admissible} if $\what\D^{\vee, \text{re}}(\what\la)_+$ is isometric to $\what\D^{\vee, \text{re}}_+$. We denote by $\prin^k$ the set of $\la \in \h^*$ such that $k\La_0 + \la$ is principal admissible.
\end{defn}
It is known {\cite{KW89}} that $\prin^k$ is non empty only for levels $k$ of the form
\[
k = -h^\vee + p/u, \quad \gcd(p, u)=\gcd(u,r^\vee)=1, \quad p \geq h^\vee
\]
known as principal admissible levels, that the weight $k\La_0$ is principal admissible precisely when $k$ is a principal admissible level, and that for $\la \in \prin^k$, there exists $\wtil{y} = yt_{-\eta} \in \wtil{W}$ such that $\what\D^{\vee}(\la)_+ = \wtil{y}(\what\Pi_u)$ where
\[
\what\Pi_u = \{uK - \theta^{\vee}_{\text{s}}, \al_1^\vee, \ldots, \al_n^\vee\}.
\]
(here $\theta^{\vee}_{\text{s}}$ is the highest short coroot of $\g$). Indeed this leads to the classification of principal admissible weights, which we paraphrase from \cite[Proposition 1.5]{KW90}
\begin{prop}
Each weight $\la \in \prin^k$ can be written in the form
\begin{align}\label{eq:prin.param}
\la = y(\nu - ({p}/{u})\eta) - \rho 
\end{align}
where $y \in W$, $\eta \in \check{P}_+^{u}$ and $\nu \in P_+^{p, \text{reg}}$. The finite group $\wtil{W}_+$ acts on the set of triples $(y, \eta, \nu)$, preserving $\la$ in \eqref{eq:prin.param}, and $\prin^k$ is in bijection with the set of orbits of this action.
\end{prop}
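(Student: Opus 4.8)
The plan is to leverage the structural fact recalled just before the statement: for $\la \in \prin^k$ there is an element $\wtil y = y t_{-\eta} \in \wtil W$, with $y \in W$ and $\eta \in \check P$, carrying the standard simple system $\what\Pi_u$ onto the simple coroots $\what\Pi^\vee(\la)$ of the integral coroot system $\what\D^{\vee,\mathrm{re}}(\la)_+$. By definition, admissibility of $\what\la = k\La_0 + \la$ amounts to $\what\la + \what\rho$ being regular dominant integral with respect to $\what\D^{\vee,\mathrm{re}}(\la)_+$, that is $\langle \what\la + \what\rho, \beta^\vee\rangle \in \Z_{>0}$ for every $\beta^\vee \in \wtil y(\what\Pi_u)$ (integrality of $\what\rho$ upgrades the dominance hypothesis to strict positivity, using the first admissibility condition). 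Using invariance of the pairing under $\wtil W$, I would recast this as the single condition that $\mu := \wtil y^{-1}(\what\la + \what\rho)$ is regular dominant with respect to $\what\Pi_u$ itself.

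I would then read off the parameters from $\mu$. Since $\wtil W$ fixes the level and $\what\la + \what\rho$ has level $k + h^\vee = p/u$, the weight $\mu$ has level $p/u$ as well; let $\nu$ denote its finite part. Pairing $\mu$ against the finite coroots $\al_1^\vee, \ldots, \al_\ell^\vee$ of $\what\Pi_u$ shows $\nu$ is regular dominant integral, while pairing against the affine coroot $uK - \theta_s^\vee$ gives $p - \langle\nu, \theta_s^\vee\rangle \in \Z_{>0}$; together these are exactly the conditions defining $\nu \in P_+^{p,\mathrm{reg}}$. Unwinding $\mu = t_\eta y^{-1}\bigl((p/u)\La_0 + (\la + \rho)\bigr)$ via the translation formula yields $\nu = y^{-1}(\la+\rho) + (p/u)\eta$, which rearranges to the expression \eqref{eq:prin.param}. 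To force $\eta$ into the stated fundamental domain I would fix the representative $\wtil y$ by demanding $\wtil y(\what\Pi_u) \subset \what\D^{\vee,\mathrm{re}}_+$; this positivity requirement confines $\eta$ to the associated fundamental domain, which is precisely $\check P_+^u$.

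It then remains to account for the residual ambiguity, and here I expect the main obstacle to lie. The element $\wtil y$ is determined by $\la$ only up to right multiplication by the setwise stabiliser of $\what\Pi_u$ in $\wtil W$, and the content of the final assertion is that this stabiliser is the group of alcove-preserving elements realising $\wtil W_+ = \wtil W/\what W$ by diagram automorphisms. I would first check the easy direction, that the induced action on triples---permuting the nodes of $\what\Pi_u$ and correspondingly adjusting $(y,\eta,\nu)$---leaves $\la$ in \eqref{eq:prin.param} fixed, so that $\la$ depends only on the orbit and the orbit map is well defined; reversing the argument also shows every triple yields a genuine element of $\prin^k$. The harder direction is injectivity on orbits: two triples producing the same $\la$ must give integral coroot systems with a common simple system, whence the corresponding elements $\wtil y$ differ by a stabiliser element and the triples lie in one orbit. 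The delicate point throughout is the precise identification of this stabiliser with $\wtil W_+$ together with the bookkeeping of how the finite part $y$ and the translation part $\eta$ transform jointly under the diagram automorphisms; by comparison, the level counting that places $\nu$ and $\eta$ in their fundamental domains is routine once $\mu$ is in hand.
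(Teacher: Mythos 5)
The paper offers no proof of this proposition to compare against: it is quoted, explicitly as a paraphrase, from \cite[Proposition 1.5]{KW90}. Judged on its own merits, your reconstruction is the standard argument and is essentially sound: transporting $\what\la + \what\rho$ by $\wtil y^{-1}$ to a weight $\mu$ that is regular dominant integral with respect to $\what\Pi_u$, reading off $\nu \in P_+^{p,\text{reg}}$ from the finite part (the pairing $\langle \mu, uK - \theta_s^\vee\rangle = p - \langle \nu, \theta_s^\vee\rangle \in \Z_{>0}$ is exactly right), unwinding $\wtil y = y t_{-\eta}$ to obtain \eqref{eq:prin.param}, and quotienting by the setwise stabiliser of $\what\Pi_u$ in $\wtil W$. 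Your identification of that stabiliser with $\wtil{W}_+$ is correct: any element of $\wtil W$ fixing $\what\Pi_u$ setwise preserves the set of real coroots in $\Z\what\Pi_u$, namely $\what\D^\vee_u = \{\al^\vee + nuK\}$, hence lies in $W \ltimes t_{u\check{P}}$, and the stabiliser maps isomorphically to $(W \ltimes t_{u\check{P}})/(W \ltimes t_{u\check{Q}}) \cong \check{P}/\check{Q} \cong \wtil{W}_+$. (One caution: this subgroup is only abstractly isomorphic to $\wtil{W}_+$; the composite with the projection $\wtil W \to \wtil W / \what W$ need not be injective --- for $\sll_2$, $u=2$, the generator $t_{\al^\vee}r_\al$ of the stabiliser already lies in $\what W$ --- so your phrase ``realising $\wtil W_+$'' should not be read too literally.)

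The one sentence that needs repair is ``reversing the argument also shows every triple yields a genuine element of $\prin^k$.'' This is false if ``triple'' means an arbitrary element of $W \times \check{P}_+^u \times P_+^{p,\text{reg}}$: the positivity demand $\wtil y(\what\Pi_u) \subset \what\D^{\vee,\text{re}}_+$, which you correctly use to pin down $\wtil y$, is not merely a normalisation placing $\eta$ in $\check{P}_+^u$ but a genuine constraint coupling $y$ to $\eta$ (if $(\eta, \al_i) = 0$ one needs $y(\al_i^\vee) \in \D^\vee_+$, and if $(\eta, \theta_s) = u$ one needs $y(\theta_s^\vee) \in \D^\vee_-$). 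Concretely, for $\g = \sll_2$ and $p/u = 3/2$, the triple $(y, \eta, \nu) = (e, \al^\vee, \varpi)$ has $\eta \in \check{P}_+^2$ and $\nu \in P_+^{3,\text{reg}}$ but gives $\la = -3\varpi$, with $\langle \la + \rho, \al^\vee\rangle = -2 \in \Z_{\leq 0}$, which is not admissible; with the unconstrained triple set one gets six $\wtil{W}_+$-orbits against only four principal admissible weights. The paper's paraphrase leaves this compatibility implicit (it is present in the original statement of Kac--Wakimoto), and since the $\wtil{W}_+$-action preserves $\la$, the bad orbits do not merge with the good ones, so the discrepancy cannot be argued away. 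Your proof as structured does impose the constraint when fixing $\wtil y$; you just need to state explicitly that the triples entering the bijection are the compatible ones, after which your injectivity argument --- two compatible triples for the same $\la$ produce the same canonical simple system $\what\Pi^\vee(\la)$, so the corresponding $\wtil y$'s differ by a stabiliser element --- goes through as you describe.
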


If $k$ is a principal admissible level then the $\what\g$-modules $L_k(\la)$ acquire the natural structure of modules over $L_k(\g)$ for $\la \in \prin^k$ (and for no other $\la \in \h^*$) \cite{A.rat.O}.

\begin{rem}
When $\g$ is non simply laced there arises the possibility that $\what\D^{\vee, \text{re}}(\what\la)_+$ be isometric not to $\what\D^{\vee, \text{re}}_+$ but rather to a certain twisted affine root system. Such weights are known as \emph{coprincipal admissible} \cite{A.rat.O}. Most statements quoted in Section \ref{subsec:aff.W} below were proved in the cited articles in both the principal and coprincipal cases, and most of the results of this article also hold in the coprincipal case. That said, our chief applications are in simply laced types.
\end{rem}

\subsection{Affine $W$-algebras}\label{subsec:aff.W}

Let $f \in \g$ be an $\ad$-nilpotent element. The universal affine $W$-algebra $W^k(\g, f)$ is defined via quantum Drinfeld-Sokolov reduction of $V^k(\g)$ \cite{KRW03}. We recall a few details about the construction. By the Jacobson-Morozov theorem there exists an $\sll_2$-triple $\{e, h, f\}$ containing $f$. Then we have $\g = \bigoplus_{j \in (1/2)\Z} \g_j$ the grading by eigenvalues of $\ad(h/2)$. In fact, as in \cite{KRW03}, the construction can be carried out in the more general setting of a \emph{good grading} of $\g$ compatible with $f$ \cite{EK}. In this setting the grading continues to correspond to the eigenspace decomposition relative to $\ad(x^0)$ for a grading element $x^0 \in \h$, only now $x^0$ might differ from $h/2$. The case $x^0 = h/2$ is then referred to as the Dynkin grading. At certain times we will assume our grading to be even (i.e., over $\Z$ rather than $(1/2)\Z$), so it is important to work with non Dynkin gradings. Note that we can choose our triangular decomposition of $\g$ compatible with the grading. We write $\D_0$ for the root system of $\g_0$ as well as $\D_{>0}$, etc., similarly.

We recall the vertex superalgebra $F(\g_{>0})$ of charged free fermions \cite[Section 2.5]{va.beginners} \cite[Example 1.3]{KRW03}, whose underlying vector superspace is the Grassmann algebra in variables $\varphi^{(i)}_n$ for $n \leq 0$ and $\varphi^{(i)*}_n$ for $n \leq -1$, where $\varphi^{(i)}$ runs over a basis of $\g_{>0}$ and $\varphi^{(i)*}$ denotes the dual basis of $\g_{>0}^*$. There is a $\Z$-grading induced by $\deg(\varphi^{*}) = +1$ and $\deg(\varphi) = -1$ and a certain differential (the BRST charge) $d$, and by definition
\[
W^k(\g, f) = H^0(V^k(\g) \otimes F(\g_{>0}), d).
\]
The standard notation for the simple quotient of $W^k(\g, f)$ is $W_k(\g, f)$. However we will almost always be working with levels of the form $k = -h^\vee + p/u$, and so to avoid clutter as well as emphasise the role of the nilpotent orbit $\mbo = G \cdot f$ we adopt the notation $W(\mbo, p/u)$ for the simple quotient.

We recall the nilpotent orbit $\mbo_u \subset \g$, characterised by  its Zariski closure
\begin{align}\label{eq:mbo.u}
\ov\mbo_u = \{x \in \g \mid \ad(x)^{2u} = 0\},
\end{align}
in case $\gcd(u, r^\vee) = 1$ and by a similar condition, which we do not recall, in the coprincipal case $\gcd(u, r^\vee) = r^\vee$.
\begin{defn}
The simple quotient of the vertex algebra $W^k(\g, f)$ is called an \emph{exceptional $W$-algebra} if the level $k = -h^\vee + p/u$ is admissible and $f \in \mbo_u$.
\end{defn}
The exceptional $W$-algebras are known to be lisse {\cite{Arakawa2015}} and rational {\cite{McRae2021}}. Under some additional hypotheses (see Theorem \ref{thm:rationality.thm} below), the classification of irreducible modules is also understood. We note that for such cases the rationality of the vertex algebra was established already in \cite{AE2023}. Irreducible $W(\mbo, p/u)$-modules may be constructed by applying quantum Drinfeld-Sokolov reduction to appropriate $\what\g$-modules. For technical reasons it is more convenient to work with a slightly different reduction called the $(-)$-variant, denoted $H_{f, -}^0(-)$ \cite{FKW} \cite{Arakawa.Rep.I}. 

We now summarise some results about the construction of modules over exceptional $W$-algebras via the $(-)$-variant reduction. The notion of associated variety plays an important role. The associated variety $\Var(I)$ of an ideal $I \subset U(\g)$ is by definition the maximal spectrum of the image of the associated graded $\gr(I) \subset S(\g)$, viewed as a subvariety of the affine space $\g$. The Zhu algebra of $V^k(\g)$ is isomorphic to the universal enveloping algebra $U(\g)$ \cite{Zhu96, FZ92} and the Zhu algebra of $L_k(\g)$ is a quotient $U(\g) / I_k$ for some two-sided ideal $I_k$. If a $\what\g$-module $L_k(\la)$ is to descend to a module over the vertex algebra $L_k(\g)$, then $I_k$ must be contained in $J_\la \subset U(\g)$ the annihilator of $L(\la)$, and so $\Var(J_\la)$ must be contained in the closure of $\Var(I_k)$. In fact $\Var(I_k) = \overline{\mbo}_u$ by definition, the more explicit characterisation \eqref{eq:mbo.u} being {\cite[Theorem 5.14]{Arakawa2015}}.

Let $f \in \mbo_u$ now. It turns out {\cite[Theorem 5.5.4]{Arakawa.Rep.II}} that the reduction $H_{f, -}(-)$ annihilates those modules $L_k(\la)$ for which the containment $\Var(J_\la) \subset \ov{\mbo}_u$ is strict. We are thus led to formulate:
\begin{defn}
Call the weight $\la \in \prin^k$ \emph{replete} if $\Var(J_\la) = \overline{\mbo}_u$. Denote by $\prin^k_\circ$ the set of replete admissible weights of level $k$.
\end{defn}
Let $\sim$ denote the relation $\la' = y \circ \la$ for some $y \in W$, where $y \circ \la$ denotes the ``dot'' action $y(\la + \rho) - \rho$. For weights $\la, \la' \in \prin^k$ satisfying $\la' \sim \la$ the reductions of $L_k(\la')$ and $L_k(\la)$ are isomorphic. We denote by $P_{0, +}$ the set $\{\la \in \h^* \mid \text{$\left<\la, \al^\vee\right> \in \Z_+$ for all $\al \in \D_{0, +}$} \}$ of weights of $\g$ integrable with respect to $\D_{0, +}$. Summarising the discussion so far, we have:
\begin{prop}{\ }
Let $k = -h^\vee + p/u$ be a principal admissible level for $\g$.
\begin{enumerate}
\item  If $\la \in \prin^k \cap P_{0, +}$ then $H_{f, -}^0(L_k(\la))$ is an ordinary $W(\mbo, p/u)$-module.

\item If $\la, \la' \in \prin^k$ and $\la' \sim \la$, then $H_{f, -}^0(L_k(\la')) \cong H_{f, -}^0(L_k(\la))$.

\item For $\la \in \prin^k$ we have $H_{f, -}^0(L_k(\la)) = 0$ unless $\la \in \prin^k_\circ$.
\end{enumerate}
\end{prop}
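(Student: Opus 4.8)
The plan is to prove each of the three assertions of the Proposition by appealing to the structural results about the $(-)$-variant reduction $H_{f,-}^0(-)$ summarised immediately above, as established in \cite{Arakawa.Rep.I, Arakawa.Rep.II}. Each statement has a distinct character: (1) concerns when the reduction produces an \emph{ordinary} module (a positive-energy condition), (2) is an invariance of the reduction under the dot action of $W$, and (3) is a vanishing criterion phrased through the associated variety. I would treat them in this order, as (3) supplies the crucial nonvanishing dichotomy and its proof is the most substantial.

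\medskip

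First I would address (1). The reduction functor $H_{f,-}^0(-)$ sends a suitable highest weight $\what\g$-module to a $W(\mbo,p/u)$-module whose conformal grading is inherited from the $h$-grading together with the affine energy operator. The content of the claim is that when $\la \in \prin^k \cap P_{0,+}$, i.e. when $\la$ is dominant integral with respect to $\D_{0,+}$, the resulting module has conformal weights bounded below with finite-dimensional graded pieces; this is precisely the defining property of an ordinary module. I would invoke the general machinery of \cite{FKW, Arakawa.Rep.I}: the integrability condition along $\g_0$ guarantees that the relevant free-fermion and affine factors assemble into a module on which $L_0$ acts semisimply with the required spectrum, so that $H_{f,-}^0(L_k(\la))$ lies in category $\mathcal{O}$ and is in fact ordinary. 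The work here is bookkeeping of the grading rather than anything deep.

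\medskip

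For (2), the key point is that if $\la' = y \circ \la$ for $y \in W$, then $L_k(\la')$ and $L_k(\la)$ have the \emph{same} central character and lie in the same block, and the linkage is realised by a finite Weyl group element acting through the dot action. The reduction $H_{f,-}^0(-)$ is known to be insensitive to such twists: intuitively, the BRST cohomology only sees the restriction of the weight data transverse to the $\sll_2$-triple, and the finite Weyl group reflections are absorbed by the choice of representative. I would cite the relevant compatibility of the $(-)$-reduction with the $W$-action established in \cite{Arakawa.Rep.I} (the analogue for the $(-)$-variant of the $W$-equivariance results for Drinfeld-Sokolov reduction), from which the isomorphism $H_{f,-}^0(L_k(\la')) \cong H_{f,-}^0(L_k(\la))$ follows directly.

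\medskip

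Statement (3) is where I expect the main obstacle to lie, and it is the crux of the Proposition. The argument rests on {\cite[Theorem 5.5.4]{Arakawa.Rep.II}}, already quoted in the excerpt, which asserts that $H_{f,-}$ annihilates $L_k(\la)$ whenever the containment $\Var(J_\la) \subset \ov{\mbo}_u$ is strict. Since $f \in \mbo_u$ and $\Var(I_k) = \ov{\mbo}_u$, the reduction detects precisely those modules whose annihilator variety is as large as possible, namely the \emph{replete} weights for which $\Var(J_\la) = \ov{\mbo}_u$. The forward implication — that $\la \notin \prin^k_\circ$ forces $H_{f,-}^0(L_k(\la)) = 0$ — is then immediate from the cited theorem, since $\la \in \prin^k$ already guarantees $\Var(J_\la) \subset \ov{\mbo}_u$ and failure of repleteness means this inclusion is strict. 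The delicate part, and the reason this is the hard step, is ensuring that the hypotheses of {\cite[Theorem 5.5.4]{Arakawa.Rep.II}} are genuinely met in our setting: one must verify that for $\la \in \prin^k$ the associated variety $\Var(J_\la)$ is indeed contained in $\ov{\mbo}_u$ (which uses $I_k \subset J_\la$ together with $\Var(I_k) = \ov{\mbo}_u$ from {\cite[Theorem 5.14]{Arakawa2015}}), and that the geometric input on associated varieties of admissible-weight annihilators, controlled by the theory of \cite{Arakawa2015}, applies uniformly across the principal admissible weights of level $k$. Once this containment is in hand, statement (3) follows by direct invocation of the vanishing theorem.
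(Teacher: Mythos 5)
Your proposal is correct and takes essentially the same route as the paper, which in fact states this Proposition without a separate proof, as a summary of the surrounding discussion resting on exactly the ingredients you invoke: the containment $\Var(J_\la) \subset \ov{\mbo}_u$ obtained from $I_k \subset J_\la$ together with \cite[Theorem 5.14]{Arakawa2015}, the vanishing criterion \cite[Theorem 5.5.4]{Arakawa.Rep.II} for part (3), and the standard properties of the $(-)$-reduction from \cite{FKW, Arakawa.Rep.I} for parts (1) and (2). The only difference is one of emphasis---you spell out the grading bookkeeping for (1) and the linkage invariance for (2) a little more explicitly than the paper, which simply records these facts from the cited literature---but the logical content is identical.
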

We consider the hypothesis that each equivalence class of $\prin^k_\circ / \sim$ have a representative $\la \in P_{0, +}$. The reduction $H^0_{f, -}(L_k(\la))$ is an irreducible ordinary module over the rational $W$-algebra $W(\mbo_u, p/u)$, and our hypothesis is chosen to guarantee that all such modules are obtained this way.
\begin{thm}[{\cite[Theorem 7.9]{AE2023}}]\label{thm:rationality.thm}
Suppose every element of $\prin^k_\circ / \sim$ possesses a representative $\la \in P_{0, +}$, and let $\laset$ be a set consisting of one such representative $\la$ of each equivalence class. Then $\{H_{f, -}^0(L_k(\la)) \mid \la \in \laset\}$ is a complete set of isomorphism classes of irreducible modules over the lisse rational vertex algebra $W(\mbo_u, p/u)$.
\end{thm}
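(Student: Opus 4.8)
The plan is to establish the three claimed properties one by one, reducing each to a computation with the explicit parametrization of replete admissible weights and the cohomology vanishing results summarized in the preceding Proposition. The statement is Theorem \ref{thm:rationality.thm}, quoted from \cite{AE2023}, so I would reconstruct its proof by first reducing the problem of classifying irreducible $W(\mbo_u, p/u)$-modules to a problem about $\what\g$-modules at the principal admissible level $k$, and then controlling which of those modules survive the $(-)$-reduction $H^0_{f,-}(-)$.

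First I would recall that exceptionality gives us both lisseness \cite{Arakawa2015} and rationality, and that for a rational lisse vertex algebra the number of isomorphism classes of simple modules is finite and every simple module is ordinary; so it suffices to produce a surjection from $\laset$ onto the set of simple modules and then argue injectivity. The forward map is $\la \mapsto H^0_{f,-}(L_k(\la))$, which by part (1) of the Proposition lands in ordinary $W(\mbo_u, p/u)$-modules whenever $\la \in \prin^k \cap P_{0,+}$. The key input that every simple module arises this way is an exactness-and-surjectivity property of the functor $H^0_{f,-}$ on the relevant category $\CO$ of $\what\g$-modules at level $k$: each simple $W$-module is a subquotient of the reduction of some $L_k(\la)$, and combined with part (3) the only $\la$ that can contribute are the replete ones, i.e. those with $\Var(J_\la) = \ov\mbo_u$. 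This is where the associated-variety machinery does the essential work, forcing the support condition.

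Next I would address injectivity and well-definedness. Two representatives could a priori give isomorphic reductions; part (2) of the Proposition shows that $\la' \sim \la$ (linkage under the dot action of $W$) already implies $H^0_{f,-}(L_k(\la')) \cong H^0_{f,-}(L_k(\la))$, so the reduction factors through $\prin^k_\circ / \sim$. The hypothesis that each $\sim$-class has a representative in $P_{0,+}$ is exactly what lets us choose $\laset$ inside the domain where part (1) applies, so the map $\laset \to \{\text{simple modules}\}$ is defined on all of $\laset$. For injectivity I would show that distinct classes in $\prin^k_\circ/\sim$ yield non-isomorphic reductions; the natural device is a highest-weight (or lowest-conformal-weight) invariant of $H^0_{f,-}(L_k(\la))$ that recovers the $\sim$-class of $\la$, for instance by tracking how the reduction acts on the Zhu algebra of $W(\mbo_u, p/u)$ and matching it against the $\what\g$-data, so that the assignment on simple modules is a genuine bijection.

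The hard part will be the surjectivity, specifically verifying that the $(-)$-reduction functor hits every simple ordinary $W(\mbo_u,p/u)$-module rather than just a sub-family. Lisseness guarantees finiteness and that all simple modules are ordinary, but matching the count of $\sim$-classes of replete weights against the actual module count requires the precise vanishing theorem \cite[Theorem 5.5.4]{Arakawa.Rep.II}, which kills exactly the $L_k(\la)$ with strict containment $\Var(J_\la) \subsetneq \ov\mbo_u$, together with non-vanishing for the replete ones. Establishing non-vanishing — that $H^0_{f,-}(L_k(\la)) \neq 0$ for $\la \in \prin^k_\circ \cap P_{0,+}$, not merely $H^0_{f,-} \neq 0$ for the Verma or the whole of $\prin^k$ — is the delicate point, and I expect it to rest on a comparison of characters or on the exactness of $H^0_{f,-}$ on $\CO_k$ combined with the explicit structure of the simple $\what\g$-modules at admissible level. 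Given these two vanishing/non-vanishing statements and the linkage in part (2), the three conclusions follow formally.
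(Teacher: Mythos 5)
You should note first that the paper contains no proof of this statement at all: it is imported verbatim as {\cite[Theorem 7.9]{AE2023}}, and the only internal scaffolding is the preceding Proposition (parts (1)--(3)) together with the citation of {\cite[Theorem 5.5.4]{Arakawa.Rep.II}}. So there is no in-paper argument to match your sketch against; the comparison has to be with the proof in \cite{AE2023}, and against that standard your proposal has two genuine gaps rather than being a proof. First, for your ``forward map'' $\la \mapsto H^0_{f,-}(L_k(\la))$ to land in the set of \emph{simple} modules at all, you need that the reduction of $L_k(\la)$ is irreducible (or zero) for $\la \in \prin^k_\circ \cap P_{0,+}$. Your sketch never addresses this; it is not a formal consequence of parts (1)--(3) but one of the main theorems of Arakawa's representation theory of $W$-algebras and of \cite{AE2023}, and it is precisely as deep as the vanishing statement you do flag. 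Second, your route to surjectivity --- ``each simple $W$-module is a subquotient of the reduction of some $L_k(\la)$,'' via exactness of $H^0_{f,-}$ on category $\mathcal{O}$ --- does not work as stated: exactness of the reduction functor tells you about images of $\what\g$-modules, but gives you no device for realising an \emph{arbitrary} simple $W(\mbo_u,p/u)$-module as such an image. The actual argument in \cite{AE2023} goes through Zhu algebra theory: the Zhu algebra of $W^k(\g,f)$ is the finite $W$-algebra $U(\g,f)$, simple positive-energy modules over the rational simple quotient correspond to simple modules over its (quotient) Zhu algebra, and Losev's classification of finite-dimensional irreducible $U(\g,f)$-modules in terms of primitive ideals $J \subset U(\g)$ with $\Var(J) = \ov\mbo_u$ is what matches the full set of simples against the replete weights --- simultaneously delivering the non-vanishing you call delicate and the injectivity you propose to extract from ``a highest-weight invariant.''

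In short, you have correctly identified the architecture (well-definedness via part (2), restriction to replete weights via part (3), surjectivity as the hard point) and correctly guessed that Zhu-algebra data distinguishes the modules, but the proposal defers exactly the substantive content: irreducibility of $H^0_{f,-}(L_k(\la))$ is never mentioned, and the surjectivity mechanism you suggest (exactness on $\mathcal{O}$ plus a count) is not the one that succeeds and would not close on its own. As written this is a plan whose unproven steps coincide with the theorem's actual content.
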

We remark that the hypothesis in Theorem \ref{thm:rationality.thm} is satisfied in many cases, including whenever $\mbo_u$ is (1) the principal nilpotent orbit for $\g$ of any type, (2) any nilpotent orbit in $\g$ of type $A$, and (3) the subregular nilpotent orbit in $\g$ of simply laced type.

We now record an easy lemma which will be useful later on.
\begin{lemma}\label{lem:replete.integrability.general}
Let $\la \in \prin^k_\circ \cap P_{0, +}$ and let $\beta = -y(\eta)$ where $\la = y(\nu - (p/u)\eta) -\rho$. Then $\left<\beta, \al^\vee \right> \equiv 0 \pmod{u}$ for $\al \in \D_0$, and $\left<\beta, \al^\vee \right> \not\equiv 0 \pmod{u}$ for $\al \notin \D_0$.
\end{lemma}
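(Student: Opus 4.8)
The plan is to recast the statement as an equality of root subsystems $\D_0 = \D_\beta$, where I set $\D_\beta := \{\al \in \D : \langle \beta, \al^\vee\rangle \equiv 0 \pmod{u}\}$, and then prove the two inclusions separately. Since $\beta = -y(\eta)$ and $y \in W$ preserves both the form and the coroot system, one has $\langle \beta, \al^\vee\rangle = -\langle \eta, (y^{-1}\al)^\vee\rangle$, so $\D_\beta = y(\D_\eta)$ with $\D_\eta := \{\gamma \in \D : \langle \eta, \gamma^\vee\rangle \in u\Z\}$. Because $\eta \in \check P_+^u$ is a dominant coweight, $\D_\eta$ is the root system of the reductive subalgebra $\g^{[0]} := \h \oplus \bigoplus_{\langle \gamma, \eta\rangle \in u\Z} \g_\gamma$, namely the degree-zero part of the $\Z/u\Z$-grading of $\g$ induced by $\ad(\eta)$. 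I expect the entire content of the lemma to be the assertion that, after transport by $y$, this subalgebra coincides with the good-grading piece $\g_0$ attached to $f \in \mbo_u$.

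The inclusion $\D_0 \subseteq \D_\beta$ is the elementary half and uses only $\la \in P_{0,+}$. Writing $\la + \rho = y(\nu - (p/u)\eta)$ and setting $\gamma = y^{-1}\al$ for $\al \in \D_{0,+}$, I would compute
\[
\langle \la + \rho, \al^\vee\rangle = \langle \nu, \gamma^\vee\rangle + (p/u)\langle \beta, \al^\vee\rangle.
\]
The left-hand side is an integer since $\la \in P_{0,+}$ forces $\langle \la, \al^\vee\rangle \in \Z_{\geq 0}$, and $\langle \nu, \gamma^\vee\rangle \in \Z$ because $\nu$ is a weight; hence $(p/u)\langle \beta, \al^\vee\rangle \in \Z$, and $\gcd(p,u)=1$ gives $u \mid \langle \beta, \al^\vee\rangle$. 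As both $\D_0$ and $\D_\beta$ are stable under $\al \mapsto -\al$, this settles $\D_0 \subseteq \D_\beta$ and in fact establishes the first assertion of the lemma outright.

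For the reverse inclusion $\D_\beta \subseteq \D_0$ I would bring in repleteness. The first step is to identify $(\D_\beta)^\vee$ with the finite part of the integral coroot system $\what\D^{\vee,\text{re}}(\la)$: starting from $\what\D^\vee(\la)_+ = \wtil{y}(\what\Pi_u)$ with $\wtil{y} = y\, t_{-\eta}$ and $\what\Pi_u = \{uK - \theta^\vee_{\text{s}}, \al_1^\vee, \ldots, \al_\ell^\vee\}$, the coroots generated before translation occupy $K$-levels in $u\Z$, and $t_{-\eta}$ carries $\al^\vee + muK$ to a \emph{finite} coroot precisely when $\langle \al, \eta\rangle \in u\Z$, after which $y$ sends $\al^\vee$ to $(y\al)^\vee$. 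Thus the finite coroots of $\what\D^\vee(\la)$ are exactly $\{\delta^\vee : \delta \in \D_\beta\}$. I would then invoke the theory of associated varieties of principal admissible representations to conclude that $\Var(J_\la)$ is the closure of the orbit $\Ind_{\mfl_\beta}^{\g}(0)$ induced from the Levi $\mfl_\beta$ with root system $\D_\beta$, of dimension $\dim \g - \dim \mfl_\beta = \dim \g - \ell - |\D_\beta|$. Working, as we may, with an even good grading ($\g_{1/2} = 0$), so that $\dim \g^f = \dim \g_0$ and $\mbo_u = \Ind_{\g_0}^{\g}(0)$ has dimension $\dim \g - \ell - |\D_0|$, the repleteness condition $\Var(J_\la) = \ov\mbo_u$ forces $|\D_\beta| = |\D_0|$, which together with the first inclusion yields $\D_\beta = \D_0$.

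The main obstacle is this last step: establishing cleanly that the finite integral coroot datum $(\D_\beta)^\vee$ controls $\Var(J_\la)$ as the closure of $\Ind_{\mfl_\beta}^{\g}(0)$, so that enlarging $\D_\beta$ strictly shrinks the orbit. This is where the cited results on associated varieties do the real work, anchored by the fact from \cite{Arakawa2015} that $\Var(I_k) = \ov\mbo_u$, which already gives $\Var(J_\la) \subseteq \ov\mbo_u$ for every $\la \in \prin^k$ and reduces repleteness to an equality of dimensions; the remainder is bookkeeping with the parametrisation \eqref{eq:prin.param}. A secondary point to verify carefully is the identification $\dim \g^f = \dim \g_0$ together with $\mbo_u = \Ind_{\g_0}^{\g}(0)$, i.e. that the even good grading for $f \in \mbo_u$ realises $\mbo_u$ as the Richardson orbit of $\g_{\geq 0}$.
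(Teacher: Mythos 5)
The first assertion and your proof of it coincide exactly with the paper's: expand $\left<\la+\rho, \al^\vee\right> = \left<\nu, (y^{-1}\al)^\vee\right> + (p/u)\left<\beta, \al^\vee\right>$, use $\la \in P_{0,+}$ to force integrality on $\D_{0,+}$, and invoke $\gcd(p,u)=1$. Your identification of $\D_\beta$ with the finite part of the integral root system $\D(\la)$ is also correct and is implicit in the paper. Where you diverge is the second assertion. The paper disposes of it in three lines: an extra root $\al \notin \D_0$ with $u \mid \left<\beta,\al^\vee\right>$ places $\al$ in $\D(\la)$, so $|\D(\la)| > |\D_0|$, and this contradicts $\la \in \prin^k_\circ$ by the cited {\cite[Theorem 3.6]{AE2023}}. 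You instead attempt to re-derive that cardinality bound from scratch, via $\Var(J_\la) = \ov{\Ind_{\mfl_\beta}^{\g}(0)}$ and a dimension count against $\mbo_u = \Ind_{\g_0}^{\g}(0)$.

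Here is the genuine gap: for general $\g$ the subsystem $\D_\beta = \D(\la)$ need not be the root system of any Levi subalgebra, so the object $\Ind_{\mfl_\beta}^{\g}(0)$ on which your whole second half rests is not defined a priori. For principal admissible weights, $\D_\beta$ is the finite part of the system generated by $\wtil{y}(\what\Pi_u)$, i.e.\ a \emph{pseudo-Levi} subsystem attached to a proper subdiagram of the affine Dynkin diagram. Concretely, in $\g = D_4$ with $u=2$ and $\eta = \varpi_2^\vee$ (the trivalent node), one gets $\D_\eta = \{\pm\al_1, \pm\al_3, \pm\al_4, \pm\theta\}$ of type $4A_1$, which has full rank and is not a Levi subsystem. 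You cannot know that $\D_\beta$ is Levi until after the lemma is proved (since then $\D_\beta = \D_0$), so using the induced-orbit formula to prove the lemma is circular. The description of $\Var(J_\la)$ for admissible $\la$ that correctly handles these pseudo-Levi integral systems is precisely the content of {\cite[Theorem 3.6]{AE2023}} (building on \cite{Arakawa2015}) — exactly the point you flag as ``the main obstacle'' and defer to ``the theory of associated varieties.'' So the hard content is not actually discharged; your proposal replaces the paper's citation with a sketch of what that citation proves, and the sketch is only watertight in type $A$, where every integral subsystem is Levi, whereas the lemma is stated and used for general $\g$ (types $D$ and $E_8$ in this paper).

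A secondary point: ``working, as we may, with an even good grading'' is not a free reduction, because $\D_0$ is grading-dependent — for a non-even good grading $|\D_0| = \dim\g_0 - \ell < \dim\g^f - \ell$, and your own dimension count shows the second assertion forces $|\D_0| = \dim\g^f - \ell$. Evenness thus functions as a standing hypothesis of the lemma (satisfied in all the paper's applications: pyramid gradings in type $A$, principal gradings in type $D$, the even Dynkin grading for subregular $E_8$), not something one may assume without loss of generality. Your remaining bookkeeping — $\dim\g^f = \dim\g_0$ for even good gradings and the Richardson realisation $\mbo_u = \Ind_{\g_0}^{\g}(0)$ — is standard and correct.
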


\begin{proof}
For the first claim, evidently $\left<\la, \al^\vee\right> = \left<y(\nu)-\rho, \al^\vee\right> + (p/u)\left<\beta, \al^\vee\right>$ is non integral, unless $\left<\beta, \al^\vee \right> \equiv 0 \pmod{u}$, as required. For the second claim, we note that if $\left<\beta, \al^\vee \right> \equiv 0 \pmod{u}$ held for some root $\al \notin \D_0$, without loss of generality positive, then either $\left<\la, \al^\vee \right> \in \Z_{<0}$ contradicting the definition of admissible weight, or $\la$ would be integrable with respect to $\al^\vee$. But we would then have $|\D(\la)| > |\D_0|$ which, by {\cite[Theorem 3.6]{AE2023}}, would contradict $\la \in \prin_\circ^k$.
\end{proof}

We conclude this subsection with a few comments about the ``boundary admissible'' case which will occupy most of our attention in the remainder of the paper. A principal admissible level is said to be a boundary level if it is of the form $k = -h^\vee + h^\vee / u$. Since $P_+^{h^\vee, \text{reg}} = \{\rho\}$, the parametrisation of $W(\mbo_u, p/u)$-modules stated in Theorem \ref{thm:rationality.thm} reduces to $\prin^k_\circ / \sim = \check{P}_+^{u} / \wtil{W}_+$.

\subsection{Modular data}\label{subsec:mod.data}

In \cite{Huang-rigidity} Huang has shown that the category of representations of a lisse rational self-dual vertex algebra (under additional minor hypotheses) carries the structure of a modular tensor category \cite{MS89, EGNO, BakKir}. For a general modular tensor category, one has matrices $T$ (expressing the twists of simple objects) and $S$ (expressing the evaluation of the Hopf link labeled by pairs of simple objects), and these satisfy relations which are encoded in the definition of ``modular data''. More precisely the Grothendieck group of a modular tensor category acquires a commutative ring structure induced by the tensor product, and the simple objects define a basis relative to which we may consider the structure constants of the product. These are known as the fusion rules $N_{ij}^k$, i.e., $[i] * [j] = \sum_{[k]} N^{k}_{ij} [k]$, where $i, j, k$ index simple objects. In particular for a vertex algebra $V$ as above, we denote by $\CF(V)$ the Grothendieck group of its modular tensor category of representations. We now recall the notion of modular datum from \cite[Definition 2.1]{GM}:
\begin{defn}
A modular datum consists of a finite index set $\Phi$, two distinguished indices $\vac \in \Phi$ (the vacuum index) and $\circ \in \Phi$ (the minimal index\footnote{The name ``minimal index'' is not standard, but we adopt it because in the case of the module category of a vertex algebra it corresponds to the module which has minimal conformal dimension}), and a pair of square matrices $S$ and $T$ (whose indices run over $\Phi$). The axioms are that
\begin{itemize}
\item $S$ is unitary, $T$ is diagonal and both are symmetric,

\item $S_{\vac, i} \in \R^\times$ and $S_{\circ, i} \in \R_{> 0}$ for each $i \in \Phi$,

\item $S^2 = (ST)^3$,

\item The fusion rules
\[
N_{ij}^k = \sum_{x \in \Phi} \frac{S_{i, x} S_{j, x} \ov{S}_{k, x} }{S_{\vac, x}}
\]
all lie in $\Z_{\geq 0}$.
\end{itemize}
\end{defn}

If $(S, T)$ is a modular datum, then $S^2$ is a permutation matrix, which we denote $C$, furthermore $C^2 = I$. So we denote by $i^\vee \in \Phi$ the unique index corresponding to $i \in \Phi$ under $C$. The matrix $S$ has all real entries if and only if $C = I$.

We briefly recall the notion of asymptotic data. If a series $f(\tau) = q^\lambda \sum_{n=0}^\infty f_n q^n$ in real powers of $q = e^{2\pi i \tau}$ with non negative real coefficients $f_n$ converges for $\operatorname{Im}(\tau) > 0$, and satisfies the asymptotic
\[
f(iT) \sim A e^{\frac{\pi g}{12T}}, \quad T \rightarrow 0_+
\]
(where $T \in \R_{>0}$) for some $g >0$ and $A > 0$, then we say $f(q)$ has asymptotic growth $g$ and asymptotic dimension $A$. We refer to the pair $(A, g)$ as the asymptotic datum of $f(\tau)$. The character $\chi_M(\tau) = \tr_{M} q^{L_0-c/24}$ of an irreducible module $M$ over a lisse rational vertex algebra $V$ admits an asymptotic datum, this being a well known consequence of Zhu's modularity theorem {\cite{Zhu96}}, see \cite{DJX} and {\cite[Proposition 2.2]{AEM2024}}.

The minimal eigenvalue of $L_0$ on an irreducible $V$-module $M$ is known as its conformal dimension, denoted $h_M$. Now let us suppose the minimal (i.e., most negative) conformal dimension is attained on a unique irreducible $V$-module $M_\circ$. This is believed to be true in general, and is verified in the cases treated in this paper (see Section \ref{subsec:T-matrices}). Then for each irreducible $V$-module $M_i$ we have
\begin{align}\label{eq:qdim.formula}
\frac{S_{i, \circ}}{S_{\vac, \circ}} = \frac{A_i}{A_1} \in \R_{>0}.
\end{align}
This quantity is called the quantum dimension of $M_i$.

More generally a ``dimension'' on a modular tensor category (or on any fusion category) is a ring homomorphism from its Grothendieck ring to $\C$. There is a unique dimension whose values on the simple objects are positive real numbers. This can be taken as a definition of the Frobenius-Perron dimension. More explicitly, though:
\begin{defn}
Let us denote by $N(i)$ the matrix $N(i)_{jk} = N_{ij}^k$. The eigenvalues of $N(i)$ are real and at least one is positive. The Frobenius-Perron dimension $\FPdim(i)$ of $i \in \Phi$ is by definition the largest eigenvalue of $N(i)$.
\end{defn}
The Frobenius-Perron dimension is also given by the formula $\FPdim(i) = {S_{i, \circ}}/{S_{\vac, \circ}}$ \cite{GM}.

The positivity entailed by \eqref{eq:qdim.formula} is quite powerful, having in particular the following easy consequence will save us some effort when it comes to pin down certain signs appearing in Weyl group sums.
\begin{lemma}\label{lem:S.untwistable}
Let $(\Phi, S, T)$ and $(\Phi, \widetilde{S}, T)$ be two modular data on the same set $\Phi$, such that
\[
\widetilde{S}_{ij} = \varepsilon_i \varepsilon_j S_{ij},
\]
where $\varepsilon_{i} \in \{+1, -1\}$ for each $i \in \Phi$. Then $\varepsilon_i = +1$ for all $i \in \Phi$ and $\widetilde{S} = S$.
\end{lemma}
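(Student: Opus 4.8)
The plan is to exploit the single axiom that constrains the \emph{signs} of the $S$-matrix entries rather than merely their absolute values, namely the positivity of the distinguished ``minimal'' row: the requirement $S_{\circ, i} \in \R_{>0}$ (in representation-theoretic terms, the positivity of the quantum dimensions recorded in \eqref{eq:qdim.formula}). Since the hypothesised twist $\widetilde{S}_{ij} = \varepsilon_i \varepsilon_j S_{ij}$ only multiplies entries by $\pm 1$, the sole way for both $S$ and $\widetilde{S}$ to have a uniformly positive $\circ$-row is for the twist to act trivially along that row, and this will immediately force all the $\varepsilon_i$ to coincide.

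Concretely, I would first take the two modular data to share their distinguished indices $\vac$ and $\circ$; this is harmless in our applications and is in any case dictated by the common matrix $T$, since $\circ$ is read off from $T$ as the index of minimal conformal dimension. Applying the positivity axiom to each datum gives $S_{\circ, i} \in \R_{>0}$ and $\widetilde{S}_{\circ, i} \in \R_{>0}$ for every $i \in \Phi$. On the other hand, the hypothesis specialised to the $\circ$-row reads $\widetilde{S}_{\circ, i} = \varepsilon_\circ \varepsilon_i S_{\circ, i}$, so that $\varepsilon_\circ \varepsilon_i = \widetilde{S}_{\circ, i}/S_{\circ, i} > 0$. As each $\varepsilon$ lies in $\{+1,-1\}$, this positivity is equivalent to the equality $\varepsilon_i = \varepsilon_\circ$, valid for every $i \in \Phi$.

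It then follows that $\varepsilon_i \varepsilon_j = \varepsilon_\circ^2 = 1$ for all pairs $(i,j)$, whence $\widetilde{S}_{ij} = S_{ij}$ throughout, i.e.\ $\widetilde{S} = S$. The sharper assertion that $\varepsilon_i = +1$ for all $i$ holds after the harmless normalisation $\varepsilon_\circ = +1$: replacing every $\varepsilon_i$ by $-\varepsilon_i$ leaves each product $\varepsilon_i \varepsilon_j$, and hence the relation between $S$ and $\widetilde{S}$, unchanged, so we may assume $\varepsilon_\circ = +1$ from the outset. I expect no serious obstacle, as the entire force of the statement is carried by the positivity of the $\circ$-row; it is worth remarking only that the $\vac$-row would \emph{not} suffice for this argument, since the corresponding axiom guarantees merely $S_{\vac, i} \in \R^\times$, a condition insensitive to a sign twist.
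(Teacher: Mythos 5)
Your proof is correct and follows essentially the same route as the paper: the paper's proof likewise invokes positivity of the quantum dimensions (equivalently, of the $\circ$-row) to conclude that all the $\varepsilon_i$ are equal, the only cosmetic difference being that it compares each $\varepsilon_i$ with $\varepsilon_\vac$ via the ratio \eqref{eq:qdim.formula} rather than with $\varepsilon_\circ$ directly. Your closing normalisation remark ($\varepsilon_\circ = +1$ up to a global sign flip) addresses the same point the paper disposes of with the phrase ``in either case we recover $\widetilde{S} = S$'', and is in fact slightly more careful about the literal claim $\varepsilon_i = +1$.
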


\begin{proof}
Since for both modular data, the quantum dimensions must take positive real values, we have $\varepsilon_i / \varepsilon_\vac \in \R_{>0}$ for each $i \in \Phi$. Thus all the $\varepsilon_i$ are equal, and in either case we recover $\widetilde{S} = S$.
\end{proof}

\section{General results}

In this section we gather some results on characters and their modular transformations, for exceptional $W$-algebras. Although our chief application will be in types $A$ and $D$, the results of this section apply in other types also.

\subsection{A product formula for $q$-characters}

The following result is a consequence of {\cite[Formula (11)]{KW.boundary}}.
\begin{prop}\label{prop:H.la.char.thm}
Let $k = -h^\vee + p/u$ be an admissible level and $\la = y(\rho - (p/u)\eta)-\rho \in \prin_\circ^k \cap P_{0, +}$ a replete $\g_0$-integrable admissible weight. Then
\begin{align}\label{eq:H.la.char.thm}
\widetilde{\chi}_{H^0_{f, -}(L_k(\la))}(\tau) = \prod_{n \geq 1} \frac{(1-q^{un})^\ell}{(1-q^n)^{\dim(\g_0)}} \prod_{\substack{n \in \Z, \al \in \D \\ un + (\eta, \al) > 0}} (1-q^{un + (\eta, \al)}).
\end{align}
\end{prop}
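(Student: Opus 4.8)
The plan is to read off the normalized character from the Euler--Poincaré principle for the $(-)$-reduction and then specialise. Since $H^0_{f,-}(L_k(\la))$ is the only nonvanishing cohomology of the BRST complex $L_k(\la)\otimes F(\g_{>0})$ (the vanishing theorem that already underlies Theorem~\ref{thm:rationality.thm}), its normalized $q$-character equals the Euler--Poincaré character of the whole complex, graded by the $W$-algebra energy operator $L_0 = L_0^{\what\g} - (x^0)_{(0)} + \text{const}$. Concretely this means one takes the Kac--Wakimoto character of $L_k(\la)$, specialises $e^{-\delta}\mapsto q$ and $e^{-\al}\mapsto q^{(x^0,\al)}$ for $\al\in\D$, and multiplies by the correspondingly specialised ghost character of $F(\g_{>0})$. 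This identity is precisely the content of \cite[Formula (11)]{KW.boundary}; what remains is to evaluate its right-hand side for the parameters $(y,\eta)$ with $\nu=\rho$.

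I would organise the evaluation into a denominator part and a numerator part. For the denominator, the specialised affine denominator $\prod_{\al\in\what\D_+}(1-e^{-\al})^{\mult\al}$ factors into an imaginary contribution $\prod_{n\geq1}(1-q^n)^\ell$ and a real contribution indexed by $\D$. The ghost character involves only the graded directions, and its role is exactly to cancel the $\D_{>0}\cup\D_{<0}$ part of the real contribution (each $b$--$c$ pair attached to $\al\in\D_{>0}$ removes the bosonic families of both $\g_\al$ and $\g_{-\al}$). Since $(x^0,\al)=0$ for $\al\in\D_0$, what survives is $\prod_{n\geq1}(1-q^n)^{\ell+|\D_0|}=\prod_{n\geq1}(1-q^n)^{\dim\g_0}$, the denominator of \eqref{eq:H.la.char.thm}. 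The hypothesis $\la\in P_{0,+}$ enters here: $\g_0$-integrability guarantees that the $\D_{0,+}$ factors of the denominator are matched by the corresponding reflections in the Weyl numerator, so the apparent $q^0$ poles cancel and the specialisation is legitimate for an ordinary module.

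For the numerator I would use the classification of principal admissible weights. Writing $\what D^\vee(\la)_+=\wtil y(\what\Pi_u)$ with $\wtil y=y\,t_{-\eta}$ and using $\nu=\rho$, the Weyl--Kac numerator $\sum_{w\in\what W(\la)}\det(w)\,e^{w(\la+\what\rho)-\what\rho}$ becomes, after conjugating the summation index back to $\what W$ and specialising, a Weyl--Kac \emph{denominator} for the $u$-rescaled affine root system $\what\D_u$ translated by $\eta$. Applying the denominator identity to $\what\D_u$ then yields $\prod_{n\geq1}(1-q^{un})^\ell$ from the imaginary roots $un\delta$ (of multiplicity $\ell$) and $\prod_{un+(\eta,\al)>0}(1-q^{un+(\eta,\al)})$ from the real roots $\al+un\delta$. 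Here Lemma~\ref{lem:replete.integrability.general} is what makes the bookkeeping consistent: it identifies the roots $\al$ for which $un+(\eta,\al)$ can vanish (that is, $(\eta,\al)\in u\Z$) as exactly those lying in $\D_0$, so that the strict inequality $un+(\eta,\al)>0$ excludes precisely the exponent-zero terms and no spurious $(1-q^0)$ factor is introduced. Dividing the numerator by the surviving denominator produces \eqref{eq:H.la.char.thm}.

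The main obstacle is the numerator bookkeeping: one must track the conjugation by $\wtil y$ and the translation $t_{-\eta}$ through the specialisation carefully enough to recognise the result as a denominator product for $\what\D_u$, and one must pin down the overall fractional power of $q$ so that the resulting expression genuinely has constant term $1$ (equivalently, that the normalising prefactor $q^{h-c/24}$ is the correct one). The ghost cancellation in the denominator and the validity of the specialisation for the ordinary module $H^0_{f,-}(L_k(\la))$ are the remaining points requiring care, both controlled by the hypotheses $\la\in\prin^k_\circ\cap P_{0,+}$ through Lemma~\ref{lem:replete.integrability.general}.
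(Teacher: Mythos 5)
Your architecture is the paper's: Euler--Poincar\'e for the complex $L_k(\la)\otimes F(\g_{>0})$ together with the vanishing of higher cohomology, the Kac--Wakimoto character formula with $\nu=\rho$ exhibiting the numerator as a $u$-rescaled affine denominator translated by $\eta$, and Lemma~\ref{lem:replete.integrability.general} plus $\g_0$-integrability to match the vanishing factors of numerator and denominator. But there is a genuine error in your specialisation. You grade by $L_0 = L_0^{\what\g} - (x^0)_{(0)} + \text{const}$ and correspondingly substitute $e^{-\al}\mapsto q^{(x^0,\al)}$ into both the module character and the ghost character, i.e.\ you evaluate the product at $\xi = -\tau x^0$. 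For the $(-)$-reduction, with $F(\g_{>0})$ moded as in Section~\ref{subsec:aff.W}, this double-counts the twist: the grading shift is already encoded in the translation $t_{-\eta}$ appearing in the numerator $D(u\tau, y^{-1}(\xi)-\tau\eta)$ together with the overall prefactor $q^{h_\la - c/24}$ (note that $x^0$ enters the conformal dimension formula \eqref{eq:h.la.fla} exactly there), and the correct recipe --- the one the paper uses --- is to multiply by the \emph{untwisted} $\chi_F(\tau,\xi)$ and take the limit $\xi\to 0$. Since the ghost factors cancel the $\D_{>0}$-part of the denominator identically in $\xi$, and the $\D_{0,+}$-factors are insensitive to your substitution because $(x^0,\al)=0$ on $\D_0$, the entire effect of your twist survives in the numerator: carrying your rule through yields real-root exponents $un + (\eta,\al) \pm (x^0, y(\al))$, i.e.\ formula \eqref{eq:H.la.char.thm} with $\eta$ replaced by $\eta \mp y^{-1}(x^0)$.

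This is not a normalisation discrepancy; it changes the $q$-series. Take $\g = \sll_2$, $u=5$, boundary level $k = -2 + 2/5$ and $f$ principal, so the exceptional $W$-algebra is the Lee--Yang minimal model. For $\eta = \rho^\vee$ (with $y=e$), formula \eqref{eq:h.la.fla} gives $h_\la = 0$, so this weight labels the vacuum module, and \eqref{eq:H.la.char.thm} correctly yields
\begin{align*}
\frac{(q^5;q^5)_\infty\,(q;q^5)_\infty\,(q^4;q^5)_\infty}{(q;q)_\infty} \;=\; \prod_{n \equiv \pm 2 \ (5)} (1-q^n)^{-1},
\end{align*}
the Rogers--Ramanujan product with no $q^1$-term, as the vacuum character must be. Your substitution shifts $\eta = \rho^\vee$ by $\pm x^0 = \pm\rho^\vee$: one sign produces a spurious vanishing factor $(1-q^0)$, the other returns $\prod_{n\equiv\pm1\,(5)}(1-q^n)^{-1}$, which is the character of the \emph{other} irreducible module (of conformal dimension $-1/5$) and is not related to the vacuum character by any overall power of $q$. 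In effect your prescription is the grading bookkeeping of the $(+)$/KRW conventions, and applied here it assigns to each $\la$ the character formula belonging to a different weight. The remainder of your argument --- the denominator bookkeeping giving $(1-q^n)^{\dim\g_0}$, and the identification via Lemma~\ref{lem:replete.integrability.general} of the roots with $(\eta,\al)\in u\Z$ as exactly $\D_0$, so that the poles and zeros cancel --- is sound and coincides with the paper's proof once the specialisation is corrected.
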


A pleasant feature of formula \eqref{eq:H.la.char.thm} is its dependence on $\la$ only through $\eta$ (and not $y$).

\begin{proof}[Proof sketch]
The Weyl-type character formula of Kac and Wakimoto \cite{KW88} gives
\[
\chi_{L_k(\la)}(\tau, \xi) = \frac{D(u y^{-1} t_{-\beta/u}(\tau, \xi/u, t/u^2) )}{D(\tau, \xi, t)},
\]
where $D$ is the Weyl denominator for the affine root system $\what\D$, and $\what{\h}^*$ is coordinatised as $(\tau, \xi, t) = 2\pi i \left( -\tau \La_0 + \sum_i \xi_i \varpi^\vee_i + t \delta \right)$. From now on we ignore the term $t\delta$. In these coordinates
\[
D(\tau, \xi) = \prod_{n \geq 1} \left[ (1-q^{n})^{\ell} \prod_{\al \in \D_+} (1 - e^{-2\pi i \sum \xi_i (\varpi^\vee_i | \al)} q^{n-1}) (1 - e^{2\pi i \sum \xi_i (\varpi^\vee_i | \al)} q^{n}) \right],
\]
and the numerator is transformed to $D(u\tau, y^{-1}(z) - \tau \eta)$.

The effect of the reduction $H_{f, -}^0(-)$ at the level of characters is to multiply $\chi_{L_k(\la)}$ by the fermion character
\[
\chi_{F}(\tau, z) = \prod_{n \geq 1} \prod_{\al \in \D_{>0}} (1 - e^{-2\pi i \sum \xi_i (\varpi^\vee_i | \al)} q^{n-1}) (1 - e^{2\pi i \sum \xi_i (\varpi^\vee_i | \al)} q^{n}),
\]
and take the limit $\xi \rightarrow 0$.

In the limit, zeros appear in both the numerator and denominator. In the denominator they are the terms
\[
1 - e^{-2\pi i \sum \xi_i (\varpi^\vee_i | \al)} q^{n-1} \quad \text{for $\al \in \D_{0, +}$ and $n=1$}.
\]
In the numerator they are the terms
\begin{align*}
1 - e^{-2\pi i \sum \xi_i (\varpi^\vee_i | y(\al))}  q^{u(n-1) + (\eta, \al)}\quad &\text{for $\al \in \D_{+}$ such that $(\eta, \al) = 0$ and $n=1$}, \\
\text{and} \quad 1 - e^{2\pi i \sum \xi_i (\varpi^\vee_i | y(\al))}  q^{un - (\eta, \al)}\quad &\text{for $\al \in \D_{+}$ such that $(\eta, \al) = u$ and $n=1$}.
\end{align*}
They cancel when $\D_{0, +}$ coincides with the set
\[
\{y(\al) \mid \text{$\al \in \D_+$ and $(\eta, \al) \equiv 0 \pmod{u}$} \}.
\]
If we choose $y$ so that $\la$ is $\g_0$-integrable, then this condition is satisfied by Lemma \ref{lem:replete.integrability.general}. With a little additional manipulation, we arrive at the stated result.

\end{proof}
We now provide a couple of examples and applications of Proposition \ref{prop:H.la.char.thm}.

First we verify the isomorphism $W(\sll_3[2, 1], 3/2) \cong \C \vac$ at the level of characters. We choose the good even grading on $\sll_3$ associated with the left adjusted pyramid (see Section \ref{sec:type.A} below for a review of the combinatorics of pyramids and more details on the parametrisation of admissible weights, including the definition of $\check{P}_{+, f}^u$). This gives $\D_{0, +} = \{\al_1\}$ and $\dim(\g_0) = 4$. We have $P_+^{3, \text{reg}} = \{\rho\}$ and $\check{P}_{+, f}^2$ consists of a single $\wtil{W}/\what{W}$-orbit containing the weights $\varpi_1$, $\varpi_2$ and $\rho$. No matter which representative $\eta \in \check{P}_{+, f}^2$ we choose, we obtain the expected result
\begin{align*}
\widetilde{\chi}_{H^0_{f, -}(L_k(\la))}(\tau) = \prod_{n \geq 1} \frac{(1-q^{2n})^2}{(1-q^n)^{4}} \prod_{n \geq 1} (1-q^{2n}) (1-q^{2n - 1})^2 (1-q^{2n + 1})^2 (1-q^{2n})
= 1.
\end{align*}

\subsection{Application to product formulas for Virasoro characters}\label{subsec:vir.prods}

In this section we consider a class of boundary admissible $W$-algebras in type $D$ and uncover a conceptual explanation of some observed product formulas for $q$-characters of modules over the Virasoro algebra.

We denote by $L(c, h)$ the irreducible highest weight representation of the Virasoro Lie algebra of central charge $c$ and in which $L_0$ acts on the highest weight vector by the constant $h$. The module $L(c, 0)$ carries a vertex algebra structure (simple quotient of the universal Virasoro vertex algebra $\Vir^c$). The central charges $c_{p, u} = 1-6(p-u)^2/pu$, for $p, u \geq 2$ coprime, are special and the corresponding simple vertex algebras $\Vir_{p, u} =L(c_{p, u}, 0)$ are called minimal models.
\begin{prop}
Let $u \geq 1$ be odd and not divisible by $3$. Set $n = (u-1)/2$ and consider the principal $W$-algebra $B_u = W(D_n, (u-3)/u)$. Then
\begin{enumerate}
\item there exists an inclusion $L(c, 0) \subset B_u$ of $\Vir$-modules, where the central charge is $c = c_{3, 2u} = 13 - 4u - 9/u$, and furthermore $B_u$ decomposes into irreducible $\Vir$-modules as
\[
B_u = L(c, 0) \oplus L(c, n),
\]

\item the $q$-character of $B_u$ is given by the infinite product
\begin{align*}
\chi_{B_u}(q) = \frac{(q^{u}; q^{u})_\infty}{(q;q)_{\infty}} \cdot \frac{(q; q^{u})_\infty (q^{u-1}; q^{u})_\infty}{(q^{(u-1)/2}; q^{u})_\infty(q^{(u+1)/2}; q^{u})_\infty}
\end{align*}
\textup{(}where $(a;q)_\infty = \prod_{n=0}^\infty (1-aq^k)$ is the $q$-Pochhammer symbol\textup{)},

\item and, if for $m \geq 0$ even, we set $2N = mu+u-1$ then the exceptional $W$-algebra $W(\mbo_u, (2N-2)/u)$ is isomorphic to $B_u$.
\end{enumerate}
\end{prop}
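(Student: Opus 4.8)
The plan is to derive all three parts from a single engine, namely the boundary $q$-character formula of Proposition~\ref{prop:H.la.char.thm}, supplemented by the Frenkel--Kac--Wakimoto central charge and the uniqueness of simple-current extensions of Virasoro minimal models. For part~(2) I would apply Proposition~\ref{prop:H.la.char.thm} to $\g = D_n$ with $f$ principal, so that the grading is the principal Dynkin one, $\g_0 = \h$, $\dim\g_0 = \ell = n$ and $\D_0 = \emptyset$. Since $P_+^{h^\vee,\text{reg}} = \{\rho\}$ the vacuum has $\nu = \rho$ and is attached to a specific $\eta \in \check{P}^u_{+,f}$, and \eqref{eq:H.la.char.thm} becomes
\[
\widetilde{\chi}_{B_u}(\tau) = \prod_{j\geq 1} \frac{(1-q^{uj})^n}{(1-q^j)^n} \prod_{\substack{j\in\Z,\ \al\in\D \\ uj + (\eta,\al) > 0}} \bigl(1 - q^{uj + (\eta,\al)}\bigr).
\]
The remaining task is purely combinatorial: using the explicit $D_n$ root system and the chosen $\eta$, the factors with $(\eta,\al) \equiv 0 \pmod u$ cancel against the Weyl-denominator contribution (as in Lemma~\ref{lem:replete.integrability.general}), and the survivors reorganise into $(q^u;q^u)_\infty/(q;q)_\infty$ times the quotient of Pochhammer products stated in~(2).

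For part~(1) I would first compute the central charge of $B_u$ from the principal $W$-algebra formula $c = \ell - h^\vee \dim\g\,(p-u)^2/(pu)$, which for $D_n$ with $n = (u-1)/2$ and $p = h^\vee = u-3$ evaluates to $13 - 4u - 9/u = c_{3,2u}$. The conformal vector of $B_u$ therefore generates a Virasoro subalgebra of this central charge, and lisse-ness of $B_u$ forces it to be the simple minimal model $\Vir_{3,2u}$, giving the inclusion $L(c,0)\subset B_u$. To obtain the decomposition I would match the character from~(2) against minimal-model characters: the weight $n=(u-1)/2$ is the corner value $h_{1,2u-1}$ of the $(3,2u)$ Kac table, $L(c,n)$ is the order-two simple current, and the product formula equals $\chi_{L(c,0)} + \chi_{L(c,n)}$, forcing $B_u = L(c,0)\oplus L(c,n)$.

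For part~(3) I would run the same character computation for the boundary algebra $W(\mbo_u,(2N-2)/u)$ in type $D_N$ with $2N=(m+1)u-1$; the level is boundary admissible since $\gcd(2N-2,u)=\gcd(3,u)=1$, and $\mbo_u$ is among the orbits covered by Proposition~\ref{prop:H.la.char.thm}. Having identified the good even grading attached to $\mbo_u$, its $\dim\g_0$, and the vacuum coweight $\eta$, the crucial point is that the resulting product is independent of $m$ and coincides with the product of~(2). Consequently this algebra too has central charge $c_{3,2u}$ and contains $\Vir_{3,2u}$, so the matching character forces the decomposition $L(c,0)\oplus L(c,n)$. Both $B_u$ and $W(\mbo_u,(2N-2)/u)$ are thus realisations of the unique $\Z_2$ simple-current extension of $\Vir_{3,2u}$ by $L(c,n)$, whence the isomorphism.

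The main obstacle will be the $m$-independence in part~(3): this demands a uniform description of $\mbo_u\subset D_N$, its good even grading and $\dim\g_0$, and the vacuum $\eta$, so that the exponents $uj+(\eta,\al)$ reassemble into the $N$-independent Pochhammer expression. A secondary point is the upgrade from equal characters to genuine isomorphism, which relies on $L(c,0)\oplus L(c,n)$ being a simple-current extension with a unique vertex-algebra structure; one must verify that the extending module is an order-two simple current and that the conformal vector generates exactly $\Vir_{3,2u}$ rather than a larger subalgebra.
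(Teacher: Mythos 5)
Your part (2) coincides with the paper's argument: Proposition \ref{prop:H.la.char.thm} applied to the principal grading of $D_n$, with the dimension counts $\dim(\g_j)$ supplying the combinatorics. But for parts (1) and (3) the paper's engine is not character identities at all; it is \emph{asymptotic data}. The paper checks that $\Vir_{3,2u}$, $B_u$, and all the algebras $W(\mbo_u,(2N-2)/u)$ share the central charge $c_{3,2u}$ and the asymptotic growth $g = 1-1/u$ (via \cite{KW2009}), invokes \cite[Proposition 4]{KW88} --- a highest weight $\Vir$-module of minimal-model central charge and growth strictly less than $1$ is a finite direct sum of irreducibles of the minimal model --- and then computes that the asymptotic dimension of each $W$-algebra is exactly twice that of $L(c,0)$, while among irreducible $\Vir_{3,2u}$-modules only $L(c,n)$ has asymptotic dimension equal to that of $L(c,0)$. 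This pins the decomposition $L(c,0)\oplus L(c,n)$ for \emph{all} the algebras simultaneously, after which part (3) is immediate from the uniqueness of the simple-current extension, with no character computation for the exceptional algebras whatsoever.

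This difference exposes three concrete gaps in your plan. First, your part (1) hinges on the assertion that the product in (2) equals $\chi_{L(c,0)}+\chi_{L(c,n)}$; that is precisely the Bytsko--Fring identity \cite{BF2000} which this proposition is meant to explain conceptually, so asserting it as the ``forcing'' step either imports the result you are reproving or leaves a nontrivial $q$-series identity unproved --- the asymptotic-dimension argument exists exactly to avoid it. Second, ``lisse-ness forces the conformal vector to generate the simple minimal model'' is not a valid inference as stated: lisse-ness alone does not exclude a non-simple quotient of $\Vir^c$ (whose vacuum character has growth $1$); you need the growth bound $g = 1-1/u < 1$ from \cite{KW2009} (or extracted from the product in (2)) together with \cite[Proposition 4]{KW88}. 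Third, your route to part (3) --- rerunning Proposition \ref{prop:H.la.char.thm} for the orbit $[u^m,u-2,1]\subset D_N$ and proving $m$-independence of the resulting product --- is a substantial combinatorial computation (a type-$D$ analogue of Theorem \ref{thm:q-char-equalities}) which you correctly flag as the main obstacle but do not carry out; and even granting it, equality of characters alone would still require the semisimple decomposition before the simple-current uniqueness argument applies. Your plan is repairable along these lines, but as written the two load-bearing steps are exactly the ones left unverified, whereas the paper's asymptotic argument disposes of both at once.
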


Nilpotent orbits in $D_N$ are parametrised by certain partitions of $2N$. The orbit $\mbo_u$ in the theorem statement corresponds to the partition $[u^m, u-2, 1]$. We refer the reader to \cite{CMcG} for details.

\begin{proof}
We use asymptotic data. It follows from {\cite[Proposition 4]{KW88}} that a highest weight $\Vir$-module of central charge $c_{p, u}$ and asymptotic growth strictly less than $1$, must be a finite direct sum of irreducible representations of the rational vertex algebra $\Vir_{p, u}$. The asymptotic dimensions of the irreducible $\Vir_{p, u}$-modules are also given in that article explicitly.

Having fixed $u$, it is readily checked that $\Vir_{3, 2u}$ and all the $W$-algebras cited in the theorem statement have the same central charge of $c = 13 - 4u - 9/u$ {\cite[Theorem 2.2]{KRW03}} and the same asymptotic growth $g = 1-1/u$ {\cite[Theorem 2.16]{KW2009}}. So the $W$-algebras are finite extensions of $\Vir_{3, 2u}$. By explicit computation the asymptotic dimension of the $W$-algebra is twice that of $\Vir_{3, 2u}$, and at the same time the asymptotic dimensions of the irreducible $\Vir_{3, 2u}$-modules are seen to be greater than or equal to that of $L(c, 0)$ with equality only for $L(c, n)$. It follows that the decomposition of $B_u$ is $L(c, 0) \oplus L(c, n)$. So the first claim is established.

Let $\g = \bigoplus_{j \in \Z} \g_j$ be the decomposition of $\g = D_n$ relative to the principal nilpotent element. With an explicit description of the $D_n$ root system in hand, and elementary counting arguments, one finds that $\dim(\g_j) = n-\lfloor j/2 \rfloor$ for $1 \leq j \leq n-1$ and $\dim(\g_j) = n-1-\lfloor j/2 \rfloor$ for $n \leq j \leq 2n-3$. Knowing these dimensions, the second claim follows from Proposition \ref{prop:H.la.char.thm}.

The third claim follows from the theory of simple current extensions. In fact the irreducible $L(c, 0)$-module $L(c, n)$ is a simple current, and so the direct sum $L(c, 0) \oplus L(c, n)$ possesses a unique vertex algebra structure {\cite[Section 5]{DM-ceff}} {\cite[Section 2]{Y}}.

\end{proof}

\begin{rem}
The fact that the $q$-character of $L(c, 0) \oplus L(c, n)$, for $c = c_{3, 2u}$ and $n = (u-1)/2$, possesses a particularly nice product expression was observed in \cite{BF2000}. This observation was further explored in \cite{FFW}, where a probable link with the $W_3$-algebra $W(\sll_3[3], 3/7)$ (there denoted $M(3, 7)_3$) was noted in connection with the specific case of $\Vir_{3, 14}$. The uniform conceptual explanation of these product formulas via the interpretation as boundary exceptional $W$-algebras in type $D$ seems to be new.
\end{rem}

\subsection{A formula for the $S$-matrix}\label{subsec:S-mat.genfla}

Let $k = -h^\vee + p/u$ be a principal admissible level, and fix a complete set $\laset \subset P_{0, +}$ of $W \circ (-)$-representatives of elements of $\prin^k_\circ$, written as
\begin{align*}
\la = y(\nu - (p/u) \eta) - \rho.
\end{align*}
We also denote $\beta = -y(\eta)$ associated to $\la \in \laset$ and we denote by $\beset$ the set of these weights.
\begin{rem}
Soon we will restrict attention to the boundary case $p = h^\vee$, this means that $\nu = \rho$ for all $\la \in \laset$ since $P_{+}^{h^\vee, \text{reg}} = \{\rho\}$.
\end{rem}

Consider the character $\chi_\la(\tau, z)$ of the admissible module $L_k(\la)$. As $\la$ runs over $\prin^k$, these characters are linearly independent, and form a representation of $SL_2(\Z)$, so that \cite{KW89}
\[
\chi_\la\left(-\frac{1}{\tau}, \frac{z}{\tau}\right) = e^{\pi i \frac{p}{u} \frac{(z, z)}{\tau}} \sum_{\la' \in \prin^k} a(\la, \la') \chi_{\la'}(\tau, z),
\]
where the coefficients $a(\la, \la')$ are given explicitly by
\begin{align}\label{eq:aB}
a(\la, \la') = \frac{i^{|\D_+|}}{\sqrt{|P / puQ|}} \varepsilon(y) \varepsilon(y') e^{-2\pi i [(\nu, \beta') + (\nu', \beta)]} e^{-2\pi i \frac{p}{u} (\beta, \beta')} \sum_{w \in W} \varepsilon(w) e^{-2\pi i \frac{u}{p} (w(\nu), \nu')}.
\end{align}

We denote by $W(\Gamma) \subset W$ the set
\[
W(\Gamma) = \{w \in W \mid w(\D_{0, +}) \subset \D_+\}.
\]
Note that $W(\Gamma) \subset W$ is a subset and not in general a subgroup.

The following theorem is a mild generalisation of {\cite[Theorem 10.4]{AE2023}} and its proof is the same.
\begin{prop}\label{prop:S-matrix}
Assume each element of $\prin^k_\circ / \sim$ has a representative in $P_{0, +}$ and let $\laset$ be a fixed set of such representatives as above. Then the $S$-matrix of the exceptional $W$-algebra $W(\mbo_u, p/u)$ is given by
\begin{align}\label{eq:S.orig}
S_{\la, \la'} = i^{|\D_{>0}|} \sum_{w \in W(\Gamma)} a(\la, w \circ \la') \prod_{\al \in \D_{0, +}} \frac{(w(\al), \xi)}{(\al, \xi)},
\end{align}
where $\xi \in \h^*$ is an auxiliary parameter chosen so $(\al, \xi) \neq 0$ for all $\al \in \D_{0, +}$.
\end{prop}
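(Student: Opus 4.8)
The plan is to follow the argument of \cite[Theorem 10.4]{AE2023}, obtaining the $S$-matrix of $W(\mbo_u, p/u)$ as the limit, under the reduction functor, of the $S$-transformation of the affine characters $\chi_\la$. By Theorem \ref{thm:rationality.thm} the irreducible modules are the $H^0_{f,-}(L_k(\la))$ for $\la \in \laset$, and by Zhu's modularity theorem the $S$-matrix is the transition matrix for $\tau \mapsto -1/\tau$ on their normalised characters. So the first step is to record, as in the proof of Proposition \ref{prop:H.la.char.thm}, that up to a normalising power of $q$ the $W$-character is
\[
\widetilde{\chi}_{H^0_{f,-}(L_k(\la))}(\tau) = \lim_{\xi \to 0} \chi_F(\tau, \xi)\, \chi_\la(\tau, \xi),
\]
the limit being of indeterminate type on account of the zeros appearing, in the $\g_0$-directions, in both the fermionic and the affine factors.

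Second, I would apply the affine $S$-transformation \eqref{eq:aB}, writing $\chi_\la(-1/\tau, \xi/\tau)$ as $e^{\pi i (p/u)(\xi,\xi)/\tau}\sum_{\la' \in \prin^k} a(\la,\la')\chi_{\la'}(\tau,\xi)$, multiply through by $\chi_F$, and let $\xi \to 0$. The Gaussian prefactor tends to $1$, and, tracking the substitution $\xi \mapsto \xi/\tau$ through the fermion factor consistently (routine but requiring care), the left-hand side becomes the $S$-transform of the $W$-character while the right-hand side must be reorganised into a combination of the characters $\widetilde{\chi}_{H^0_{f,-}(L_k(\la'))}$, $\la' \in \laset$.

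Third, I would organise the sum over $\prin^k$ by dot-orbits. By the dot-invariance of the reduction (part (2) of the Proposition recorded just before Theorem \ref{thm:rationality.thm}), the summands $\chi_{w\circ\la'}$ with $w$ ranging over $W(\Gamma)$ all tend, as $\xi \to 0$, to the same $W$-character $\widetilde{\chi}_{H^0_{f,-}(L_k(\la'))}$, but with distinct leading coefficients. A leading-order (L'H\^{o}pital-type) analysis of the $\xi \to 0$ limit shows that the $w$-summand is weighted precisely by $\prod_{\al \in \D_{0,+}} (w(\al),\xi)/(\al,\xi)$, the ratio of the linear factors producing the cancelling $\g_0$-zeros, which is finite and nonzero exactly because $w \in W(\Gamma)$ sends $\D_{0,+}$ into $\D_+$. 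The power $i^{|\D_{>0}|}$ arises from the fermionic normalisation. Collecting the contributions yields \eqref{eq:S.orig}.

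The main obstacle will be the limiting analysis of the third step: identifying exactly the surviving Weyl group elements, invoking Lemma \ref{lem:replete.integrability.general} to see that the directions $\al \notin \D_0$ produce no further cancellation, and verifying that the leading coefficient is the stated product of linear factors. A further point to check is that the resulting expression, though written with explicit dependence on the auxiliary vector $\xi$, is in fact independent of $\xi$ as it must be; this amounts to the identity expressing that the sum over $W(\Gamma)$ computes an intrinsic limit. The generalisation beyond \cite[Theorem 10.4]{AE2023} affects only the explicit form of $a(\la,\la')$ (permitting general $p$ and a general good grading rather than the boundary specialisation $\nu = \rho$), and not the structure of the argument.
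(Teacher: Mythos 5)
Your proposal follows precisely the strategy of the proof the paper invokes: the paper justifies Proposition \ref{prop:S-matrix} with the single remark that it is a mild generalisation of {\cite[Theorem 10.4]{AE2023}} whose ``proof is the same'', and that proof is exactly your limit analysis --- apply the Kac--Wakimoto $S$-transformation \eqref{eq:aB} to the two-variable affine characters, multiply by the fermion character, let $\xi \to 0$, reorganise the sum over $\prin^k$ into dot-orbits (where the admissible members of the orbit of $\la'$ are the $w \circ \la'$ with $w$ the minimal-length coset representatives constituting $W(\Gamma)$), and extract the weights $\prod_{\al \in \D_{0,+}} (w(\al), \xi)/(\al, \xi)$ by a leading-order L'H\^{o}pital analysis, with non-replete orbits dropping out by the vanishing of their reductions. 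The one detail to tighten is that the specialisation must be taken along the $\tau$-dependent direction $z = -\tau x^0 + \xi$ (so that the graded trace computes the $W$-algebra conformal grading, and the Gaussian prefactor $e^{\pi i \frac{p}{u}(z,z)/\tau}$ is absorbed into the character normalisation rather than tending to $1$ as you assert), but this bookkeeping is suppressed at the same level in the paper's own sketch of Proposition \ref{prop:H.la.char.thm}.
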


Now let us restrict to $p=h^\vee$. The explicit formula for $a$ simplifies substantially. 
\begin{prop}\label{prop:S-matrix-boundary}
For a boundary admissible level, satisfying the same hypotheses as in Proposition \ref{prop:S-matrix}, the $S$-matrix takes the form
\begin{align}\label{eq:S.orig.firstsimp}
S_{\la, \la'} = C(\la, \la') \sum_{w \in W(\Gamma)} \varepsilon(w) e^{-2\pi i \frac{h^\vee}{u} (\beta, w(\beta'))} \prod_{\al \in \D_{0, +}} \frac{(w(\al), \xi)}{(\al, \xi)}
\end{align}
where $C(\la, \la') = C_0 \varepsilon(\la) \varepsilon(\la')$ for some constant $C_0$ and some function $\varepsilon : \widehat{X} \rightarrow \{+1, -1\}$.
\end{prop}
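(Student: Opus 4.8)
The plan is to start from the general $S$-matrix formula \eqref{eq:S.orig} of Proposition \ref{prop:S-matrix} and specialise it to the boundary case $p = h^\vee$. By the remark preceding this proposition, in the boundary case we have $\nu = \nu' = \rho$ for every weight in $\laset$, since $P_+^{h^\vee, \text{reg}} = \{\rho\}$. The key simplification will come from inserting this into the explicit expression \eqref{eq:aB} for $a(\la, \la')$. First I would examine the Weyl group sum $\sum_{w \in W} \varepsilon(w) e^{-2\pi i \frac{u}{p}(w(\nu), \nu')}$ in \eqref{eq:aB}: with $\nu = \nu' = \rho$ and $p = h^\vee$ this becomes $\sum_{w \in W} \varepsilon(w) e^{-2\pi i \frac{u}{h^\vee}(w(\rho), \rho)}$, which by the Weyl denominator identity evaluates to a product over positive roots and hence contributes only an overall constant independent of the pair $(\la, \la')$. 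This is the source of the constant $C_0$.

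Next I would track the remaining $\la$-dependent factors in \eqref{eq:aB}. The sign factors $\varepsilon(y)\varepsilon(y')$ split as a product of a function of $\la$ and a function of $\la'$; I would fold these, together with any other factors depending on $\la$ and $\la'$ separately, into the definition of $\varepsilon(\la)$ and $\varepsilon(\la')$, giving the announced form $C(\la, \la') = C_0\, \varepsilon(\la)\,\varepsilon(\la')$. The exponential $e^{-2\pi i [(\nu, \beta') + (\nu', \beta)]}$ becomes $e^{-2\pi i [(\rho, \beta') + (\rho, \beta)]}$, again a product of a factor depending only on $\beta$ and one depending only on $\beta'$, so these too can be absorbed into $\varepsilon(\la)\varepsilon(\la')$ (I would check these are signs, i.e.\ that $(\rho, \beta) \in \frac{1}{2}\Z$, using $\beta \in \check{P}$). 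The genuinely coupled term is $e^{-2\pi i \frac{p}{u}(\beta, \beta')} = e^{-2\pi i \frac{h^\vee}{u}(\beta, \beta')}$, which survives into the body of the sum. Substituting $\la'$ by $w \circ \la'$ in \eqref{eq:S.orig} replaces $\beta'$ by $w(\beta')$ in this exponential, producing the term $e^{-2\pi i \frac{h^\vee}{u}(\beta, w(\beta'))}$ displayed in \eqref{eq:S.orig.firstsimp}.

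The remaining ingredients of \eqref{eq:S.orig} — the prefactor $i^{|\D_{>0}|}$, the sign $\varepsilon(w)$ coming from $\varepsilon(yw)$ relative to $\varepsilon(y)$ under $\la' \mapsto w\circ\la'$, and the ratio $\prod_{\al \in \D_{0,+}}(w(\al),\xi)/(\al,\xi)$ — either carry over verbatim or combine cleanly, and I would verify that the $w$-dependent portion of the sign contributes exactly the factor $\varepsilon(w)$ now appearing inside the sum, while the $w$-independent phases are constant and get absorbed into $C_0$.

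The main obstacle I anticipate is bookkeeping of signs and phases: one must carefully disentangle which factors in \eqref{eq:aB} depend only on $\la$, only on $\la'$, on both, or on neither, and confirm that all the ``separable'' pieces really do assemble into a product $\varepsilon(\la)\varepsilon(\la')$ of $\pm1$-valued functions times a single global constant. In particular verifying that the absorbed exponentials take values in $\{\pm 1\}$ (rather than general roots of unity) requires using the boundary structure and the lattice constraints on $\beta$; if some phase is not literally a sign this could threaten the stated form. Fortunately, Lemma \ref{lem:S.untwistable} guarantees that any residual sign ambiguity in $\varepsilon$ is harmless, since the positivity of quantum dimensions forces the correct choice, so I would invoke it at the end to justify that the precise signs need not be pinned down by hand.
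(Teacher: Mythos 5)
Your proposal is correct and follows essentially the same route as the paper's proof: specialise \eqref{eq:aB} to $\nu = \nu' = \rho$ using $P_+^{h^\vee,\text{reg}} = \{\rho\}$, observe that the Weyl sum over $W$ then takes the same value for all pairs $(\la,\la')$, that the cross-term $e^{-2\pi i (\rho, \beta + \beta')}$ separates into signs (since $(\rho,\beta) \in \frac{1}{2}\Z$), and that only the coupled factor $e^{-2\pi i \frac{h^\vee}{u}(\beta, w(\beta'))}$ survives inside the sum. The one detail both you and the paper leave implicit is that the cross-term is genuinely $w$-independent under $\la' \mapsto w \circ \la'$ (it holds because $w(\beta') - \beta' \in \check{Q}$ pairs integrally with $\rho$); also, your closing appeal to Lemma \ref{lem:S.untwistable} is unnecessary here, since the proposition asserts only the existence of \emph{some} sign function $\varepsilon$, not a canonical choice.
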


\begin{proof}
For $p=h^\vee$ we have $P_+^{p, \text{reg}} = \{\rho\}$. Therefore the summation over $W$ appearing in \eqref{eq:aB} has $\nu = \nu'= \rho$, and so takes the same value for all pairs $(\la, \la')$. The term
\[
e^{-2\pi i [(\nu, \beta') + (\nu', \beta)]} = e^{-2\pi i (\rho, \beta + \beta')}
\]
takes values $\pm 1$ because $\beta, \beta \in P$ and $\rho \in \frac{1}{2}Q$. These observations give the desired conclusion.
\end{proof}

\begin{rem}
Of course it might be desirable to have the function $C(\la, \la')$ in a more explicit form, and this can be done, but the details will depend somewhat on the particular application. Under the action of $\widetilde{W} / \widehat{W}$ on $P_{+}^u$, each orbit meets the root lattice $Q$. If we choose the representatives $\la \in \laset$ so that $\eta \in Q$ then we have $\varepsilon(\la) = \varepsilon(y)$. For some Lie algebras, such as $\sll_n$ for $n$ odd, we have $\rho \in Q$ and so this simplification occurs whether or not we make special choices of $\eta$.
\end{rem}

\subsection{Modular data of $W(E_8(a_1), 31/29)$}\label{subsec:W29Smat}

In this section we present a computation of modular data for a subregular $W$-algebra of type $E_8$. We take $u=29$ so $\mbo_u$ is the subregular orbit. In fact the arguments work for other subregular denominators, but in these cases the modular data is more easily determined via isomorphisms with better understood vertex algebras \cite{vE-N}. So for the rest of this section we let $\fing = E_8$ and we denote by $\alstar$ the simple root associated with the trivalent node of the Dynkin diagram of $\fing$. A subregular nilpotent element $f \in \fing$ can be chosen so that the associated Dynkin grading, which is even, has $\D_{0, +} = \{\alstar\}$ and $\al \in \D_1$ for all $\al \in \Pi \backslash \{\alstar\}$.

The $S$-matrix is given by the formula \eqref{eq:S.orig}, which involves a sum over the set  $W(\Gamma)$. In this case the cardinality of $W(\Gamma)$ is half that of the whole of $W$, which in turn is $696729600$. So it is not practical to evaluate the sum directly.

Let us consider the root system $\D_{E_7}$, with simple roots labeled as indicated in the figure below. We put $\al^\circ = -(2\al_1+2\al_2+3\al_3+4\al_4+3\al_5+2\al_6+\al_7)$, then it turns out the root subsystem spanned by $\Pi^\bullet = \{\al^\circ, \al_1, \al_3, \al_4, \al_5, \al_6, \al_7\}$ is of type $A_7$, and $\Pi^\bullet$ is a set of simple roots for this root subsystem.
\begin{center}
  \begin{tikzpicture}[scale=.4]
    \draw (-1,1) node[anchor=east]  {};
    \foreach \x in {0,...,5}
    \draw[thick,xshift=\x cm] (\x cm,0) circle (3 mm);
    \foreach \y in {0,...,4}
    \draw[thick,xshift=\y cm] (\y cm,0) ++(.3 cm, 0) -- +(14 mm,0);
    \draw[thick] (4 cm,2 cm) circle (3 mm);
    \draw[thick] (4 cm, 3mm) -- +(0, 1.4 cm);
    \draw[xshift=0 cm,thick] node[below=8pt,fill=white] {\scriptsize{$1$}};
    \draw[xshift=2 cm,thick] node[below=8pt,fill=white] {\scriptsize{$3$}};
    \draw[xshift=4 cm,thick] node[below=8pt,fill=white] {\scriptsize{$4$}};
    \draw[xshift=6 cm,thick] node[below=8pt,fill=white] {\scriptsize{$5$}};
    \draw[xshift=8 cm,thick] node[below=8pt,fill=white] {\scriptsize{$6$}};
    \draw[xshift=10 cm,thick] node[below=8pt,fill=white] {\scriptsize{$7$}};
    \draw[xshift=4 cm, yshift=2cm, thick] node[left=8pt,fill=white] {\scriptsize{$2$}};
  \end{tikzpicture}
\end{center}
Now we consider the root system $\D$ of $\g = E_8$. The root subsystem $\D^\perp = \{\al \in \D | (\al, \alstar) = 0\}$ is of type $E_7$. The subgroup $W^\perp \subset W$ generated by reflections in elements of $\D^\perp$ coincides with the stabiliser of $\alstar$, and so has index $240$. Let us denote by $\D^\bullet$ the root subsystem $\Z\Pi \cap \D$ of type $A_7$ embedded in $\D^\perp$ as above. This induces an embedding of $W^\bullet \cong W_{A_7}$ into $W^\perp$ of index $72$.


Consider the presentation of the root system of type $A_{n-1}$ as the subset
\[
\D_{A_{n-1}} = \{(x_1,\ldots, x_{n}) \in \Z^{n} \mid \text{$\sum_i x_i = 0$ and $(x, x)=2$}\}
\]
of $\Z^{n}$ with its standard scalar product. The Weyl group of $\D_{A_{n-1}}$ is isomorphic to $S_{n}$ and acts on vectors of $\R \D_{A_{n-1}}$ by permuting their entries. One now has
\[
\sum_{w \in S_n} \varepsilon(w) e^{-\frac{2\pi i}{u} (\la, w(\mu))} = \det{M^{(u)}},
\]
where $M^{(u)}$ is the $n \times n$ matrix defined by $M_{ij}^{(u)} = e^{-\frac{2\pi i}{u} \la_i \mu_j}$. The determinant can be computed much more efficiently than by direct evaluation of the sum on the left hand side. We learned this trick from T. Gannon.

To apply this observation to the present case we rewrite the sum over $W(\Gamma)$ in
\begin{align}\label{eq:Ssum.E8.sec}
\sum_{w \in W(\Gamma)} \varepsilon(w) e^{-2\pi i \frac{h^\vee}{u} (\beta, w(\beta'))} \prod_{\al \in \D_{0, +}} \frac{(w(\al), \xi)}{(\al, \xi)}
\end{align}
as a double sum over coset representatives $[\sigma] \in W / W^\perp$ and $[\tau] \in W^\perp / W^\bullet$ of a sum over $W^\bullet$. The sum over $W^\bullet \cong W_{A_7}$ may be evaluated as a determinant as described above. It is now convenient to substitute $\xi = \alstar$ since then $\left(w(\alstar), \xi\right)$ takes on values $-2, -1, 0, 1, 2$, and depends only on $[\sigma]$. We may restrict the sum on $[\sigma]$ to those $57$ cosets for which $\left(\sigma(\alstar), \alstar\right)$ equals $1$ or $2$. The sum \eqref{eq:Ssum.E8.sec} becomes
\begin{align*}
\frac{1}{2} \sum_{\stackrel{[\sigma] \in W_{E_8} / W_{E_7}}{\left(\sigma(\alstar), \alstar\right) \in \{1, 2\}}} {\left(\sigma(\alstar), \alstar\right)} \sum_{[\tau] \in W_{E_7} / W_{A_7}} \varepsilon(\sigma\tau) \det{M^{(u)}(\tau^{-1}\sigma^{-1}(\beta), \beta')}.
\end{align*}
The computation of an entry of the $S$-matrix is thus reduced to the computation of $57 \times 72$ determinants of $8 \times 8$ matrices, which is feasible.

Now we present the modular data thus obtained in the specific case $u=29$. This vertex algebra has central charge $c = -5350/29$ and has $44$ irreducible modules,  constructed as reductions $H_{-}^0(L_k(\lambda))$ of $L_k(\la)$ where $\lambda = y(\rho - (31/29)\eta) - \rho$ and $\eta$ runs over the following list:
{\small
\begin{alignat*}{11}
&0\!:~ & (1,1,1,1,1,1,1,1) &\quad
&1\!:~ & (2,1,1,1,1,1,0,1) &\quad
&2\!:~ & (1,2,1,1,1,0,1,1) &\quad
&3\!:~ & (3,1,1,1,0,1,1,1) \\
&4\!:~ & (1,1,2,1,0,1,1,1) &\quad
&5\!:~ & (2,2,1,0,1,1,1,1) &\quad
&6\!:~ & (1,2,0,1,1,1,1,1) &\quad
&7\!:~ & (2,0,1,1,1,1,1,1) \\
&8\!:~ & (1,1,1,0,2,1,1,1) &\quad
&9\!:~ & (1,1,1,1,0,2,1,1) &\quad
&10\!:~ & (1,1,1,1,1,0,2,1) &\quad
&11\!:~ & (2,1,1,0,1,1,2,1) \\
&12\!:~ & (1,1,0,1,1,1,2,1) &\quad
&13\!:~ & (1,1,1,1,1,1,0,2) &\quad
&14\!:~ & (2,1,1,1,0,1,1,2) &\quad
&15\!:~ & (1,2,1,0,1,1,1,2) \\
&16\!:~ & (1,0,1,1,1,1,1,2) &\quad
&17\!:~ & (1,1,1,0,1,1,2,2) &\quad
&18\!:~ & (1,1,1,1,0,1,1,3) &\quad
&19\!:~ & (1,1,1,1,1,1,1,0) \\
&20\!:~ & (2,1,1,1,1,0,1,1) &\quad
&21\!:~ & (1,2,1,1,0,1,1,1) &\quad
&22\!:~ & (3,1,1,0,1,1,1,1) &\quad
&23\!:~ & (1,1,2,0,1,1,1,1) \\
&24\!:~ & (2,1,0,1,1,1,1,1) &\quad
&25\!:~ & (0,1,1,1,1,1,1,1) &\quad
&26\!:~ & (1,1,1,0,1,2,1,1) &\quad
&27\!:~ & (1,1,1,1,0,1,2,1) \\
&28\!:~ & (1,1,1,1,1,0,1,2) &\quad
&29\!:~ & (2,1,1,0,1,1,1,2) &\quad
&30\!:~ & (1,1,0,1,1,1,1,2) &\quad
&31\!:~ & (1,1,1,0,1,1,1,3) \\
&32\!:~ & (1,1,1,1,1,1,0,1) &\quad
&33\!:~ & (2,1,1,1,0,1,1,1) &\quad
&34\!:~ & (1,2,1,0,1,1,1,1) &\quad
&35\!:~ & (1,0,1,1,1,1,1,1) \\
&36\!:~ & (1,1,1,0,1,1,2,1) &\quad
&37\!:~ & (1,1,1,1,0,1,1,2) &\quad
&38\!:~ & (1,1,1,1,1,0,1,1) &\quad
&39\!:~ & (2,1,1,0,1,1,1,1) \\
&40\!:~ & (1,1,0,1,1,1,1,1) &\quad
&41\!:~ & (1,1,1,0,1,1,1,2) &\quad
&42\!:~ & (1,1,1,1,0,1,1,1) &\quad
&43\!:~ & (1,1,1,0,1,1,1,1)
\end{alignat*}
}
the weights being expressed relative to the basis of fundamental weights, with simple roots indexed according to the following labeled Dynkin diagram 
\begin{center}
  \begin{tikzpicture}[scale=.4]
    \draw (-1,1) node[anchor=east]  {};
    \foreach \x in {0,...,6}
    \draw[thick,xshift=\x cm] (\x cm,0) circle (3 mm);
    \foreach \y in {0,...,5}
    \draw[thick,xshift=\y cm] (\y cm,0) ++(.3 cm, 0) -- +(14 mm,0);
    \draw[thick] (4 cm,2 cm) circle (3 mm);
    \draw[thick] (4 cm, 3mm) -- +(0, 1.4 cm);
    \draw[xshift=0 cm,thick] node[below=8pt,fill=white] {\scriptsize{$1$}};
    \draw[xshift=2 cm,thick] node[below=8pt,fill=white] {\scriptsize{$3$}};
    \draw[xshift=4 cm,thick] node[below=8pt,fill=white] {\scriptsize{$4$}};
    \draw[xshift=6 cm,thick] node[below=8pt,fill=white] {\scriptsize{$5$}};
    \draw[xshift=8 cm,thick] node[below=8pt,fill=white] {\scriptsize{$6$}};
    \draw[xshift=10 cm,thick] node[below=8pt,fill=white] {\scriptsize{$7$}};
    \draw[xshift=12 cm,thick] node[below=8pt,fill=white] {\scriptsize{$8$}};    
    \draw[xshift=4 cm, yshift=2cm, thick] node[left=8pt,fill=white] {\scriptsize{$2$}};
  \end{tikzpicture}
\end{center}
Choices of $y \in W$ so that $\beta = -y(\eta)$ be $\g_0$-integrable can be given, but we do not record them here. The conformal dimension of $H_{-}^0(L_k(\lambda))$ is given by the formula in {\cite[Remark 2.2]{KRW03}}. For instance the module indexed $0$ above has conformal dimension $-195/29$, and the conformal dimensions (multiplied by $-29$) of all the irreducible modules are recorded in the following table, which is hopefully self-explanatory.
{\small
\begin{alignat*}{34}
&0\!:&195 &\quad &1\!:&186 &\quad &2\!:&196 &\quad &3\!:&169 &\quad &4\!:&202 &\quad &5\!:&206 &\quad &6\!:&210 &\quad &7\!:&208 &\quad &8\!:&214 &\quad &9\!:&216 &\quad &10\!:&218 \\
&11\!:&197 &\quad &12\!:&201 &\quad &13\!:&220 &\quad &14\!:&203 &\quad &15\!:&209 &\quad &16\!:&211 &\quad &17\!:&169 &\quad &18\!:&175 &\quad &19\!:&222 &\quad &20\!:&209 &\quad &21\!:&217 \\
&22\!:&188 &\quad &23\!:&221 &\quad &24\!:&223 &\quad &25\!:&225 &\quad &26\!:&204 &\quad &27\!:&208 &\quad &28\!:&212 &\quad &29\!:&191 &\quad &30\!:&195 &\quad &31\!:&132 &\quad &32\!:&216 \\
&33\!:&199 &\quad &34\!:&205 &\quad &35\!:&207 &\quad &36\!:&165 &\quad &37\!:&171 &\quad &38\!:&177 &\quad &39\!:&156 &\quad &40\!:&160 &\quad &41\!:&97 &\quad &42\!:&105 &\quad &43\!:&0
\end{alignat*}
}
The vacuum module is labeled $43$, and the minimal conformal dimension of $-225/29$ is achieved by the module labeled $25$.

The $T$-matrix is the diagonal matrix with entries $e^{2\pi i (h - c/24)}$ as $h$ ranges over the conformal dimensions of the irreducible modules. The entries of $T$ lie in the cyclotomic field $\Q(\zeta_{N})$ where $N = 2^2 \cdot 3 \cdot 29$. In the rest of this section we describe the $S$-matrix, computed in the previous section. The entries of the $S$-matrix lie in $\Q(\zeta_{N'})$ where $N' = 2^2 \cdot 29$ and in general are very complicated, so it will not do to reproduce them explicitly here. Luckily modular data exhibit a high degree of symmetry and below we are able to present a description which allows one to reconstruct $S$ completely.


We paraphrase the following remarkable result from {\cite[Proposition 2.2]{GM}} (see also \cite{CG94} and \cite{CG99}).
\begin{prop}
Let $(\Phi, S, T)$ be the modular datum associated with a modular tensor category, and suppose the matrices $S$ and $T$ are defined over $\Q(\zeta_N)$ as above. Let $\sigma \in \Gal(\Q(\zeta_N)/\Q)$ be the automorphism characterised by $\sigma(\zeta_N) = \zeta_N^\ell$, let $\ell^{-1}$ denote the multiplicative inverse of $\ell$ modulo $N$, and let
\[
G = C ST^{\ell^{-1}} ST^\ell ST^{\ell^{-1}}.
\]
Then $G$ is a signed permutation matrix, and thus defines a permutation $\Sigma$ of $\Phi$ by $G_{i, \Sigma(i)} \neq 0$, and a function $\varepsilon : \Phi \rightarrow {\pm 1}$ by $G_{i, \Sigma(i)} = \varepsilon(i)$. Furthermore
\[
S_{\Sigma(i), j} = \varepsilon(i) \sigma(S_{i, j}) \quad \text{and} \quad S_{i, \Sigma(j)} = \varepsilon(j) \sigma(S_{i, j}).
\]
\end{prop}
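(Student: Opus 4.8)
The plan is to derive the statement from the Galois symmetry of modular data (Coste--Gannon, de~Boer--Goeree), organised through the interpretation of $(S,T)$ as a representation of $SL_2(\Z)$. Set $s=\left(\begin{smallmatrix}0&-1\\1&0\end{smallmatrix}\right)$ and $t=\left(\begin{smallmatrix}1&1\\0&1\end{smallmatrix}\right)$ and let $\rho$ be the representation with $\rho(s)=S$, $\rho(t)=T$; the axioms $S^2=(ST)^3=C$ together with $C^2=I$ make this well defined, and because the diagonal entries of $T$ are $N$-th roots of unity we have $T^N=I$, so $\rho$ descends to a representation $\ov\rho$ of $SL_2(\Z/N\Z)$ (the congruence property of modular data). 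The first move is to \emph{identify $G$ group-theoretically}. A direct computation in $SL_2(\Z)$ of the word $g_\ell = s\,t^{\ell^{-1}}s\,t^{\ell}s\,t^{\ell^{-1}}$ gives
\[
g_\ell \equiv \diag(-\ell,-\ell^{-1}) \pmod{N},
\]
and since $s^2=-I$ this yields $s^2 g_\ell \equiv \diag(\ell,\ell^{-1})=:d_\ell$ in $SL_2(\Z/N\Z)$. As $\ov\rho(s^2)=C$, the defining expression collapses to $G = C\,\ov\rho(g_\ell) = \ov\rho(s^2 g_\ell) = \ov\rho(d_\ell)$. Thus the proposition reduces to showing that the image under $\ov\rho$ of the diagonal torus element $d_\ell$ is a signed permutation matrix realising $\sigma$.

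The substantive input is the Coste--Gannon relation $\sigma(S) = \ov\rho(d_\ell)\,S$, i.e.\ $\sigma$-conjugation of the modular representation is implemented by left multiplication by $\ov\rho(d_\ell)=G$; note this is consistent with $\sigma_\ell\sigma_m=\sigma_{\ell m}$ precisely because the signed permutation $\ov\rho(d_m)$ has rational entries, hence is Galois fixed. The underlying permutation $\Sigma$ I would obtain from the Verlinde formula: the normalised columns $\lambda_x\colon [i]\mapsto S_{i,x}/S_{\vac,x}$ are exactly the one-dimensional characters of the commutative fusion ring on $\Phi$ with structure constants $N_{ij}^k$, since they simultaneously diagonalise the integral matrices $N(i)$. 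Because the $N_{ij}^k$ are rational integers, $\sigma\circ\lambda_x$ is again a ring homomorphism, hence $\sigma\circ\lambda_x=\lambda_{\Sigma(x)}$ for a unique permutation $\Sigma$ of $\Phi$, giving
\[
\sigma\!\left(\frac{S_{i,x}}{S_{\vac,x}}\right) = \frac{S_{i,\Sigma(x)}}{S_{\vac,\Sigma(x)}} \qquad (i,x\in\Phi).
\]
Feeding this together with $\sigma(T)=T^{\ell}$ into $\sigma(S)=GS$ (using $S^{-1}=CS$) shows that $G$ has a single nonzero entry in row $i$, in column $\Sigma(i)$, of value $\varepsilon(i):=\sigma(S_{\vac,i})/S_{\vac,\Sigma(i)}$. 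This already proves $\sigma(S_{ij})=\varepsilon(i)S_{\Sigma(i),j}$, the first displayed identity, and the symmetry $S_{ij}=S_{ji}$ then yields the companion $S_{i,\Sigma(j)}=\varepsilon(j)\sigma(S_{ij})$. I would also record here that $\Sigma$ commutes with $C=S^2$ (the latter being Galois fixed), which is what makes the bookkeeping below close up.

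The step I expect to be the main obstacle is proving that the entries are genuine \emph{signs}, $\varepsilon(i)\in\{+1,-1\}$, rather than merely units of $\Q(\zeta_{N'})$. Two ingredients combine. Galois-twisting the unitarity relation $\sum_i S_{i,x}\ov{S_{i,y}}=\delta_{xy}$ (rewritten via $\ov S=CS$ as a polynomial identity with rational right-hand side) yields $\varepsilon(x)\varepsilon(Cx)=1$ and controls absolute values, but does not by itself force $\pm1$. To pin the sign one invokes the Gauss-sum (anomaly) identities implied by $(ST)^3=S^2=C$, which express the formal codegrees $S_{\vac,x}^{-2}$ through quadratic Gauss sums in the eigenvalues $T_{ii}$; Galois conjugation multiplies such a quadratic Gauss sum by an explicit Jacobi-symbol sign, and combined with the normalisation this forces $\varepsilon(x)\in\{\pm1\}$. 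Once $\varepsilon\colon\Phi\to\{\pm1\}$ is in hand, $G$ is a bona fide signed permutation matrix with $G_{i,\Sigma(i)}=\varepsilon(i)$, and all assertions of the proposition follow. The permutation part of the argument is soft; it is precisely this sign determination via Gauss sums that I anticipate to be delicate.
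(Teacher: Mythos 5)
Note first that the paper contains no proof of this proposition at all: it is explicitly \emph{paraphrased} from {\cite[Proposition 2.2]{GM}}, with pointers to \cite{CG94, CG99}. So your sketch is competing with the literature argument rather than with anything in the paper, and on that score its skeleton is the standard one, but two steps are genuinely broken or missing. The first is the descent to $SL_2(\Z/N\Z)$: the inference ``$T^N = I$, so $\rho$ descends'' is a non sequitur, since the normal closure of $t^N$ in $SL_2(\Z)$ is in general much smaller than the principal congruence subgroup $\Gamma(N)$; that $\ker\rho \supseteq \Gamma(N)$ is the congruence subgroup theorem of Ng and Schauenburg, a deep result which you may legitimately cite here (it applies because $(\Phi, S, T)$ is assumed to come from a modular tensor category) but which does not follow from $T^N = I$. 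Your matrix computation $s t^{\ell^{-1}} s t^{\ell} s t^{\ell^{-1}} \equiv \diag(-\ell, -\ell^{-1}) \pmod N$ is correct; however, once it is granted, your ``substantive input'' $\sigma(S) = \ov\rho(d_\ell)\,S$ is not an auxiliary fact --- it \emph{is} the proposition. Citing it wholesale turns the argument into a reduction to the literature (defensible, since the paper does exactly the same, but then the remaining derivations are redundant rather than load-bearing: once $\sigma(S) = GS$ and $\sigma(S) = PS$ are both in hand, $G = P$ follows from invertibility of $S$ alone, and $\sigma(T) = T^\ell$ plays no role in extracting the single nonzero entry per row).

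The second gap is the one you flagged yourself, and your instinct about where the difficulty lies is wrong in an instructive way: the claim $\varepsilon(i) \in \{\pm 1\}$ needs no Gauss sums or Jacobi symbols, and your sketch of that route writes down no identity that actually outputs a sign. The step is elementary from the axioms. Since $\Gal(\Q(\zeta_N)/\Q)$ is abelian, complex conjugation commutes with $\sigma$, so $\sigma$ preserves the real subfield of $\Q(\zeta_N)$; the axiom $S_{\vac, i} \in \R^\times$ then makes $\varepsilon(x) = \sigma(S_{\vac, x})/S_{\vac, \Sigma(x)}$ real. Next, apply $\sigma$ to the unitarity relation $\sum_i S_{i, x} \ov{S_{i, x}} = 1$ --- legitimate termwise because $\sigma$ commutes with the bar --- to get $|\varepsilon(x)|^2 \sum_i |S_{i, \Sigma(x)}|^2 = 1$, i.e.\ $|\varepsilon(x)| = 1$. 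A real number of modulus one is $\pm 1$. This is precisely the original Coste--Gannon argument \cite{CG94}, and together with your character-theoretic construction of $\Sigma$ (which is fine as written, modulo the remark that the characters $\lambda_x$ are pairwise distinct because the columns of $S$ are linearly independent) it closes the signed-permutation claim and both displayed identities without any arithmetic input beyond the modular-datum axioms.
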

In the present case we have $\Gal(\Q(\zeta_N)/\Q) \cong \Z/2 \times \Z/28$.
Let us fix a generator $\sigma$ of the subgroup $\Z/28$ by defining $\sigma(\zeta_N) = \zeta_N^\ell$ where, say, $\ell = 11$. Then the inverse is $\ell^{-1} = 95$. The matrix $G$ is confirmed to be a signed permutation matrix, and the corresponding $(\Sigma, \varepsilon)$ are summarised in the following table
\begin{align*}
&(43 \xrightarrow[-]{} 13 \xrightarrow[-]{} 40 \xrightarrow[-]{} 27 \xrightarrow[+]{} 32 \xrightarrow[+]{} 3 \xrightarrow[+]{} 30 \xrightarrow[+]{} 20 \xrightarrow[-]{} 23 \xrightarrow[+]{} 24 \xrightarrow[-]{} 26 \xrightarrow[+]{} 2 \xrightarrow[+]{} 8 \xrightarrow[-]{} 21 \xrightarrow[-]{}) \\
&(25 \xrightarrow[+]{} 39 \xrightarrow[+]{} 22 \xrightarrow[+]{} 14 \xrightarrow[-]{} 29 \xrightarrow[-]{} 10 \xrightarrow[-]{} 7 \xrightarrow[+]{} 9 \xrightarrow[+]{} 17 \xrightarrow[+]{} 0 \xrightarrow[+]{} 15 \xrightarrow[+]{} 42 \xrightarrow[+]{} 36 \xrightarrow[-]{} 18 \xrightarrow[-]{} ) \\
&(1 \xrightarrow[-]{} 19 \xrightarrow[-]{} 33 \xrightarrow[+]{} 37 \xrightarrow[+]{} 34 \xrightarrow[-]{} 12 \xrightarrow[-]{} 6 \xrightarrow[+]{} 11 \xrightarrow[-]{} 31 \xrightarrow[+]{} 41 \xrightarrow[+]{} 28 \xrightarrow[+]{} 35 \xrightarrow[-]{} 16 \xrightarrow[+]{} 4 \xrightarrow[-]{}) \\
&(5 \xrightarrow[-]{} 38 \xrightarrow[+]{}).
\end{align*}
That is to say, there are four orbits and so the matrix $S$ can be recovered once we specify $S_{ij}$ for fixed representatives $i, j$ of the orbits. So for example $S_{10, 38} = -\sigma^{6}(S_{25, 5})$. Since $S$ is symmetric we need to specify $10$ entries.

Some of these entries are easy to describe, for example
\begin{align*}
29 S_{1, 5} &= \sqrt{29 + 2\sqrt{29}} \cong 6.30637214365513.
\end{align*}
The minimal polynomial of $(29 S_{1,5})^2$ is $x^2 - 58x + 725$. Each of the entries can be reconstructed in $\Q(\zeta_N)$ by specifying its minimal polynomial and a numerical approximation. We now give such data:
\begin{alignat*}{11}
29 S_{43, 43} &\simeq& -5.86534923281479  &\quad& p_1(x)
&\qquad\qquad&
29 S_{25, 1}  &\simeq&  2.17190332799645  &\quad& p_3(x) \\
29 S_{43, 25} &\simeq&  0.00926657067847  &\quad& p_2(x)
&\qquad\qquad&
29 S_{25, 5}  &\simeq&  5.28799697596705  &\quad& p_4(x) \\
29 S_{43, 1}  &\simeq& -1.33861813274628  &\quad& p_3(x)
&\qquad\qquad&
29 S_{1, 1}   &\simeq&  1.58312257487191  &\quad& p_5(x) \\
29 S_{43, 5}  &\simeq& -1.01837516768809  &\quad& p_4(x)
&\qquad\qquad&
29 S_{1, 5}   &\simeq&  6.30637214365513  &\quad& p_6(x) \\
29 S_{25, 25} &\simeq&  7.46186459820243  &\quad& p_1(x)
&\qquad\qquad&
29 S_{5, 5}   &\simeq& -1.01837516768809  &\quad& p_4(x)
\end{alignat*}
Here $p_i(x)$ denotes in each case the minimal polynomial of $(29S_{ij})^2$, and they are:
{\footnotesize
\begin{align*}
p_1(x) &= x^{14} - 290x^{13} + 35525x^{12} - 2401896x^{11} + 98329720x^{10} - 2517652081x^9 + 40259995416x^8 \\
&- 391310993341x^7 + 2210572816012x^6 - 7022812306110x^5 + 12277584079569x^4 \\
&- 11286772515975x^3 + 4954878263930x^2 - 827994062832x + 17249876309 \\
p_2(x) &= x^{14} - 319x^{13} + 43094x^{12} - 3257193x^{11} + 153546416x^{10} - 4761488859x^9 + 99637649928x^8 \\
&- 1416589774627x^7 + 13554181725866x^6 - 84852777409590x^5 + 328679573925815x^4 \\
&- 708019388632942x^3 + 682823267578703x^2 - 200943809123541x + 17249876309 \\
p_3(x) &= x^{14} - 203x^{13} + 17690x^{12} - 877163x^{11} + 27647034x^{10} - 585018943x^9 + 8535296385x^8 \\
&- 86540781317x^7 + 604503876047x^6 - 2828795114335x^5 + 8377778809050x^4 \\
&- 14036640728958x^3 + 10292822746584x^2 - 1172991589012x + 17249876309 \\
p_4(x) &= x^2 - 29x + 29 \\
p_5(x) &= x^{14} - 377x^{13} + 61277x^{12} - 5685160x^{11} + 335089722x^{10} - 13187790803x^9 + 353901096465x^8 \\
&- 6482852772280x^7 + 79663111559462x^6 - 631124813968458x^5 + 3003167570886150x^4 \\
&- 7684363071348972x^3 + 9516426037908824x^2 - 4454590808159851x + 60046819431629 \\
p_6(x) &= x^2 - 58x + 725
\end{align*}
}
It would be very interesting to describe this modular datum more directly in terms of the root system of $E_8$.

\section{Results in type $A$}\label{sec:type.A}

We briefly review the description of nilpotent orbits and auxiliary data in terms of the combinatorics of pyramids \cite{CMcG, EK, BK}.

For a partition $\la = (\la_1, \la_2, \ldots, \la_k)$ of $n$, with $\la_i \geq \la_{i+1}$ for each $i$, a pyramid of $\la$ is an arrangement of $k$ rows of boxes, $\la_i$ boxes in the $i^{\text{th}}$ row (conventionally with the first row at the bottom), in which the centre of each box lies over either the centre or the edge of the box below it, and there are no overhangs.

Nilpotent orbits in $\g = \sll_n$ are well known to be parametrised by partitions of $n$ \cite{CMcG}. We are particularly interested in the nilpotent orbit $\mbo_u$, which is known \cite{Arakawa2015} to correspond to the partition $[u^m, s]$ where $n = mu+s$ and $0 \leq s < u$. We consider only left adjusted pyramids, though the constructions described below are general.

The boxes of the pyramid are labeled with the integers $1, 2, \ldots, n$, starting at the top left, running down each column, and filling the columns sequentially from left to right. For example the labeled pyramid associated with the nilpotent orbit $\mbo_{[8, 3]} \subset \sll_{11}$ is given in the following diagram
\begin{center}
\begin{tikzpicture}
\def\boxsize{0.8cm}
\foreach \n/\col/\row in {
  1/0/1,
  2/0/0,
  3/1/1,
  4/1/0,
  5/2/1,
  6/2/0,
  7/3/0,
  8/4/0,
  9/5/0,
 10/6/0,
 11/7/0
} {
  \draw (\col*\boxsize, \row*\boxsize) rectangle ++(\boxsize, \boxsize);
  \node at ({\col*\boxsize + 0.5*\boxsize}, {\row*\boxsize + 0.5*\boxsize}) {\n};
}
\end{tikzpicture}
\end{center}
Given the labeling, a nilpotent element $f \in \mbo_\la$ can be constructed, as can a good grading $\g = \bigoplus_{j \in \Z} \g_j$ compatible with $f$. The root vectors $E_{ij} \in \sll_n$ are homogeneous with respect to the grading and $E_{ij} \in \g_k$ where $k$ equals the column number of the box labeled $j$ minus that of $i$, for example in the grading determined by the pyramid above we have $E_{5, 8} \in \g_{2}$. We recall the grading element $x^0 \in \h$ from Section \ref{subsec:aff.W} characterised by $[x^0, x] = k x$ for $x \in \g_k$. From the comments above it is clear that $x^0$ (or rather the corresponding weight in $\h^*$) is given by the sum
\begin{align}\label{eq:x0.description}
x^0 = \sum_{i \in B} \varpi_i,
\end{align}
where $B$ is the set of labels along the bottom row of the pyramid.

We denote by $\D_0 \subset \D$ the root system of the subalgebra $\g_0$, as well as $W_0$ the Weyl group of $\D_0$ and $P_{0, +}$ the set of weights dominant integral with respect to $\D_{0, +}$ as in Section \ref{subsec:aff.W}. Clearly
\[
\D_0 = \{\al_{ij} \mid \text{boxes $i$ and $j$ are in the same column}\}.
\]
For the pyramid drawn above, therefore, we see that $\g_0 \cong \sll_2^3 \oplus \C^8$. More generally for a nilpotent element $f \in \mbo_u$ and its good even grading associated with the left adjusted pyramid, we have $\g_0 \cong \sll_{m+1}^s \oplus \sll_m^{u-s} \oplus \C^{u}$.

We recall the centraliser subalgebras $\g^f = \{x \in \g \mid [f, x] = 0\}$ and $\h^f = \h \cap \g^f$, as well as the following objects which will be useful later on.
\begin{defn}
Denote
\[
\D^f = \{\al \in \D \mid \al(\h^f) = 0\},
\]
and
\[
W^f = \left<r_\al \mid \al \in \D^f\right>.
\]
\end{defn}

\begin{rem}
The centraliser $\mfl \subset \g$ of $\h^f$ is a Levi subalgebra whose set of roots is $\D^f$. In type $A$ at least, $f$ is principal in $\mfl$.
\end{rem}
The descriptions of $\D^f$ in terms of the pyramid is, in a sense, dual to that of $\D_0$. Specifically
\[
\D^f = \{\al_{ij} \mid \text{boxes $i$ and $j$ are in the same row}\}.
\]

\subsection{Parametrisation of irreducible modules in type $A$}

The Dynkin diagram of $\what{\sll}_n$ consists of $n$ nodes arranged in a circle, corresponding to the simple roots $\al_0, \al_1, \ldots, \al_{n-1}$. We identify the weight
\[
\what\la = k\La_0 + \sum_i m_i \varpi_i = (k-\sum m_i)\La_0 + \sum_i m_i \La_i
\]
with the labeling of the affine Dynkin diagram by the integers $k-\sum m_i, m_1, \ldots, m_{n-1}$ on nodes $0, 1, \ldots, {n-1}$, respectively. Clearly $\check{P}_+^{u} = P_+^{u}$ corresponds to labelings by non negative integers. From now on, therefore, we will identify elements of $P_+^u$ with such labelings.

We introduce some notation
\begin{align*}
\check{P}_{+, f}^u = \{ \eta \in \check{P}^u_+ \mid \text{$\D(\eta)$ is isometric to $\D_0$} \}.
\end{align*}
We have $\wtil{W}_+ \cong \Z/n$ and its action on $P_+^{u}$ is given by rotation of labeling of the affine Dynkin diagram. The following is a direct translation of {\cite[Lemma 8.6]{AE2023}} into the above notation.
\begin{prop}\label{AE2023-lemma8.6}
Let $n = mu + s$ where $0 \leq s < u$, and let $f \in \mbo_u \subset \sll_n$. Then the labelings corresponding to $\Pvf{u}$ are: 
\begin{itemize}
\item \textup{(}for $m \geq 1$\textup{)} labelings in $P_+^u$ by $0$ and $1$ in which the copies of $0$ are arranged into non consecutive blocks; $s$ blocks of length $m$ and $u-s$ blocks of length $m-1$,

\item \textup{(}for $m=0$\textup{)} labelings in $P_+^u$ in which all entries are positive.
\end{itemize} 
\end{prop}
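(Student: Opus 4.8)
The plan is to translate the defining condition ``$\D(\eta)$ isometric to $\D_0$'' into a statement about residues modulo $u$, worked out in the standard coordinates on $\h^*$ for $\sll_n$. Writing $\eta = \sum_a \eta_a \eps_a$ in these coordinates, dominance of level $u$ reads $\eta_1 \geq \cdots \geq \eta_n$ with $\eta_1 - \eta_n \leq u$, and the pairing with a coroot is $\langle \eta, (\eps_a - \eps_b)^\vee\rangle = \eta_a - \eta_b$. Hence, recalling that $\D(\eta)$ consists of the roots $\al$ with $\langle\eta,\al^\vee\rangle \equiv 0 \pmod u$ (this being the finite shadow of the integrality condition for the weight $(p/u)\eta$), we get
\[
\D(\eta) = \{\eps_a - \eps_b \mid \eta_a \equiv \eta_b \pmod{u}\}.
\]
This is the root subsystem cut out by the partition of $\{1,\ldots,n\}$ into residue classes of $\eta_a \bmod u$; if class $c \in \Z/u$ occurs with multiplicity $n_c$ then $\D(\eta) \cong \bigsqcup_c A_{n_c-1}$. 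Since $\g_0 \cong \sll_{m+1}^s \oplus \sll_m^{u-s} \oplus \C^u$ for the left-adjusted pyramid, $\D_0$ is a disjoint union of $s$ copies of $A_m$ and $u-s$ copies of $A_{m-1}$, so $\eta \in \Pvf{u}$ becomes the purely combinatorial requirement that the multiset $\{n_c\}$ consist of $s$ copies of $m+1$ and $u-s$ copies of $m$.

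Next I would match this residue condition to the affine Dynkin labeling. The labels are the consecutive gaps $m_a = \eta_a - \eta_{a+1}$ at the finite nodes together with $m_0 = u - (\eta_1 - \eta_n)$, and reading them cyclically around the affine diagram amounts to travelling once around the sorted coordinates $\eta_n,\ldots,\eta_1$ and back, with total ascent $u$. I would set up the bijection ``labels $=$ edge-increments, residue classes $=$ arcs between successive $+1$ steps'' on the $n$-cycle: cutting at the edges carrying a $+1$ produces arcs on which the residue is constant, the arcs receive distinct residues, and an arc of size $k$ corresponds to a maximal block of $k-1$ zero-labels.

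For the forward direction (assuming $m \geq 1$), $\D(\eta) \cong \D_0$ forces every $n_c \geq m \geq 1$, so all $u$ residues occur; since the $n$ integer values $\eta_a$ lie in an interval of length $\leq u$ yet realise all residues, every integer in that range must occur, whence all gaps $m_a$ and $m_0$ lie in $\{0,1\}$. Conversely, any $\{0,1\}$-labeling with $\sum_a m_a = u$ automatically has all $u$ residues present with $n_c = (\text{block length}) + 1$, by the arc picture. In either case $\D(\eta) \cong \D_0$ is then equivalent to the $0$-blocks having lengths $s$ copies of $m$ and $u-s$ copies of $m-1$, separated by the $u$ single $1$'s, which is the asserted description.

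Finally the degenerate case $m=0$ (so $n = s < u$, $f$ principal in $\sll_n$, $\D_0 = \emptyset$) I would treat directly: $\D(\eta) = \emptyset$ means the residues $\eta_a \bmod u$ are pairwise distinct, and since any two coordinates differ by a proper nonempty partial sum of the labels, lying strictly between $0$ and $u$ exactly when every label is positive, this is equivalent to all affine labels being strictly positive. The main obstacle I expect is the bookkeeping in the cyclic correspondence---keeping straight the shift $n_c = (\text{block length}) + 1$ and handling the boundary subtleties, namely whether the coordinate range equals $u-1$ or $u$ and the degenerate length-$0$ blocks that appear when $m = 1$. These steps are elementary but must be executed carefully to land exactly on ``non-consecutive blocks, $s$ of length $m$ and $u-s$ of length $m-1$.''
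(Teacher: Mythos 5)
Your proof is correct, but it is worth pointing out that the paper itself contains no proof of this statement: the proposition is introduced as ``a direct translation of {\cite[Lemma 8.6]{AE2023}}'', so the paper's own justification is a citation, and your blind argument is a genuinely self-contained alternative. Your starting point --- reading $\D(\eta)$ as $\{\eps_a - \eps_b \mid \eta_a \equiv \eta_b \pmod u\}$ in the standard coordinates --- is the right finite shadow of integrality: for $\la = y(\nu - (p/u)\eta) - \rho$ one has $\left<\la+\rho, y(\al)^\vee\right> \in \Z$ iff $\left<\eta, \al^\vee\right> \equiv 0 \pmod u$, which matches Lemma \ref{lem:replete.integrability.general} and the worked examples in Section \ref{sec:type.A}. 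The steps you flag as bookkeeping do all close: isometry with $\D_0 \cong A_m^s \sqcup A_{m-1}^{u-s}$ only sees residue classes of size $\geq 2$, but the count $s(m+1) + (u-s)m = n$ rules out extra singleton classes when $m \geq 2$ and, when $m=1$, forces exactly $u-s$ singletons (your degenerate length-$0$ blocks); hence all $u$ residues occur in every case. Your window argument then splits into the two cases you anticipated, $\eta_1 - \eta_n = u-1$ (so $m_0 = 1$, all residues distinct among the values) and $\eta_1 - \eta_n = u$ (so $m_0 = 0$, with $\eta_1, \eta_n$ sharing a class), and in both every integer in the window must be attained, forcing all affine labels into $\{0,1\}$; the arc dictionary (class of size $k$ $\leftrightarrow$ maximal block of $k-1$ zeros, distinct arcs carrying distinct residues because the cumulative increment between two arcs lies strictly between $0$ and $u$) then yields exactly the stated block structure, and your treatment of $m=0$ via proper cyclic partial sums is likewise correct. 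Compared with the cited source, which obtains the combinatorics inside the affine machinery of admissible weights (simple roots $\wtil{y}(\what\Pi_u)$ and the integral root system of $\la$), your route is more elementary and makes explicit the same mechanism --- the bound $|\left<\beta, \al^\vee\right>| \leq u-1$ combined with mod-$u$ conditions --- that the paper exploits separately in Lemma \ref{lem:replete.integrability.type.A}.
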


The following lemma refines Lemma \ref{lem:replete.integrability.general} in type $A$. It was implicit in the proof in \cite{AE2023} of Proposition \ref{AE2023-lemma8.6} above, and it is convenient to state it here for later use. As in Section \ref{subsec:S-mat.genfla}, for $\la \in \prin^k$ we denote $\beta = -y(\eta)$, so that $yt_{-\eta} = t_{\beta}y$ in $\wtil{W}$. Notice that the transformation $\la \mapsto w \circ \la$ corresponds to $\beta \mapsto w(\beta)$. 
\begin{lemma}\label{lem:replete.integrability.type.A}
If $\la \in \prin^k_\circ$ is $\g_0$-integrable then $\left<\beta, \al^\vee \right> = 0$ for $\al \in \D_0$, and $\left<\beta, \al^\vee \right> \neq 0$ for $\al \notin \D_0$.
\end{lemma}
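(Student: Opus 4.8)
The plan is to leverage the general Lemma \ref{lem:replete.integrability.general}, upgrading its mod-$u$ statements to statements about actual vanishing by exploiting the rigidity of the type-$A$ combinatorics. The second assertion is immediate: for $\al \notin \D_0$ the general lemma gives $\langle\beta,\al^\vee\rangle \not\equiv 0 \pmod u$, and since $0$ is itself divisible by $u$ this already forces $\langle\beta,\al^\vee\rangle \neq 0$. So the entire content is the first assertion, and for it the general lemma supplies $\langle\beta,\al^\vee\rangle \equiv 0 \pmod u$ for $\al \in \D_0$; it then remains only to establish the strict bound $|\langle\beta,\al^\vee\rangle| < u$, which together with the congruence forces the value $0$.

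For the bound I would pass to the explicit description in type $A$. Writing $\langle\beta,\al^\vee\rangle = -\langle\eta, (y^{-1}\al)^\vee\rangle$ and recalling that in type $A$ every coroot is, up to sign, a sum of \emph{consecutive} simple coroots, the pairing $\langle\eta,(y^{-1}\al)^\vee\rangle$ equals $\pm(m_a + m_{a+1} + \cdots + m_b)$, a signed sum of consecutive labels of $\eta$ along the finite Dynkin diagram. By Proposition \ref{AE2023-lemma8.6} these labels all lie in $\{0,1\}$, and there are exactly $u$ of them equal to $1$ around the full affine diagram. Provided the affine node carries the label $1$, the finite part of the diagram contains only $u-1$ ones, so every such consecutive sum lies in $\{0,1,\ldots,u-1\}$ and the required bound $|\langle\beta,\al^\vee\rangle| \le u-1 < u$ holds. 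In fact, once the bound is in force, the congruence from Lemma \ref{lem:replete.integrability.general} shows directly that the condition $\al \in \D_0$ (equivalently, $y^{-1}\al$ lies inside a single block of $0$-labels) is precisely the condition $\langle\beta,\al^\vee\rangle = 0$, recovering both assertions simultaneously.

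The main obstacle is exactly the normalisation hypothesis that the affine node carries the label $1$. The weight $\beta = -y(\eta)$ is attached to a choice of representative $(y,\eta)$ of $\la$, and this choice is only well defined up to the action of $\wtil{W}_+ \cong \Z/n$, which rotates the labelling of the affine diagram. A representative in which a block of $0$'s wraps across the affine node behaves badly: there the highest root $\theta$ crosses all $u$ of the $1$-labels, giving $\langle\eta,\theta^\vee\rangle = u$ and hence $\langle\beta,\al^\vee\rangle = \pm u \neq 0$ for the corresponding $\al \in \D_0$, so the statement fails for that representative. Thus the crux is to check that, without disturbing $\la$ (and hence its $\g_0$-integrability), one may always rotate to a representative whose affine node is labelled $1$ -- possible for $m \ge 1$, since there are $u \ge 1$ ones available to place there, while for $m = 0$ one has $\D_0 = \emptyset$ and the first assertion is vacuous. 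This normalisation is precisely the one implicit in the proof of Proposition \ref{AE2023-lemma8.6}, and fixing it is what renders $\beta$ -- and therefore the statement of the lemma -- unambiguous.
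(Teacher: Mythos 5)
Your proof is correct and follows essentially the same route as the paper: both normalise the representative via the $\wtil{W}_+$-action so that the affine node of $\eta$ carries a nonzero label, deduce the uniform bound $\left|\left<\beta, \al^\vee\right>\right| \leq u-1$ for all roots, and combine it with the congruences of Lemma \ref{lem:replete.integrability.general} (the second assertion being immediate from the second congruence, as you note). The only difference is cosmetic: where you extract the bound from the $0/1$ labelings of Proposition \ref{AE2023-lemma8.6} as signed consecutive sums of labels (treating $m=0$ separately), the paper obtains it in one line from dominance, namely $\left<\eta, \theta^\vee\right> \leq u-1$ together with the fact that $\beta$ lies in the Weyl group orbit of $-\eta$.
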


\begin{proof}
Using the $\wtil{W}_+$-action, we have the freedom to choose $\eta$ so that the label on the node $\al_0^\vee$ is nonzero. For such $\eta$ we have $\left<\eta, \theta^\vee\right> \leq u-1$. Since $\beta$ is in the Weyl group orbit of $-\eta$, we have for all $\al \in \D$ that
\[
| \left<\beta, \al^\vee\right> | \leq u-1.
\]
Now the equality $\left<\beta, \al^\vee \right> = 0$ follows immediately from the first part of Lemma \ref{lem:replete.integrability.general}. The rest follows from the second part of Lemma \ref{lem:replete.integrability.general}.
\end{proof}

\begin{rem}
We note that Lemma \ref{lem:replete.integrability.type.A} is not true outside type $A$. For example in the case $\g = E_8$ with $k = -h^\vee + p/u$ where $u = 29$ considered in Section \ref{subsec:W29Smat}, we have $\rho \in \prin_\circ^k$. The nilpotent orbit $\mbo_u$ is the subregular orbit, and we find $\left<\beta, \al_*^\vee\right> = \pm 29$ when $\eta = \rho$. Since the group $\wtil{W}/\what{W}$ is trivial, there is no freedom to choose $\la$ so as to satisfy the conditions in Lemma \ref{lem:replete.integrability.type.A}.
\end{rem}

We now explain how to count the weights in $\check{P}_{+, f}^u$, concluding that these sets are in natural bijection for fixed $u$ and $s$ and varying $m$.

First we consider the case $m \geq 1$. There are $s$ blocks of $0$ of length $m$ (let us call them ``long blocks'') and another $u-s$ blocks of length $m-1$ (``short blocks''), giving a total of $u$ blocks. Since there are also $u$ copies of the label $1$, any pair of consecutive blocks must be separated by exactly one copy of the label $1$. Thus the weight can be reconstructed from the relative positions of the long and short blocks on the circle.

We are thus counting combinatorial objects known as necklaces. A necklace is a circular arrangement of $N$ beads which may each be one of $k$ colours, two arrangements being considered equivalent if they are related by a cyclic permutation. We are interested in the case $k=2$ and $N = u$, and specifically necklaces with $s$ beads of the first colour (long blocks) and $u-s$ of the second colour (short blocks). 

We see already that the sets $\check{P}_{+, f}^u$, for $m \geq 1$, are in natural bijection with each other and with a certain set of necklaces. To see the bijection with the $m=0$ case, we take a necklace and let $a_1, a_2, \ldots, a_s$ denote the lengths of strings of consecutive short blocks. There are $s$ such strings of short blocks because there are $s$ long blocks. Then $(a_1+1, a_2+1, \ldots, a_s+1)$ is the desired weight for $m=0$.
\begin{thm}\label{thm:bijections}
Let $u \geq 2$ and $0 < s < u$ be coprime to $u$. For $m \geq 0$ let $n = mu+s$ and consider the set $\check{P}_{+, f}^u$ for $\sll_n$. These sets, for varying $m$, are in natural bijection with each other, and have cardinality
\[
\frac{1}{u} \binom{u}{s}.
\]
\end{thm}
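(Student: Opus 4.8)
The plan is to realise every set $\Pvf{u}$, modulo the rotation action of $\wtil{W}_+ \cong \Z/n$, as one and the same combinatorial object---binary necklaces of length $u$ with $s$ beads of a distinguished colour---and then to count these necklaces. I would begin from the explicit descriptions in Proposition \ref{AE2023-lemma8.6}: for $m \geq 1$ an element of $\Pvf{u}$ is a cyclic labeling of the affine diagram by $0$ and $1$ whose zeros form $u$ blocks ($s$ ``long'' ones of length $m$ and $u-s$ ``short'' ones of length $m-1$) separated by the $u$ ones, while for $m=0$ it is a cyclic labeling of the $s$-node diagram by positive integers summing to $u$. Since the raw sets have different sizes $\tfrac{n}{u}\binom{u}{s}$ for different $m$, the ``natural bijection'' must be read at the level of $\wtil{W}_+$-orbits, which is also the level at which the necklace model lives.

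For $m \geq 1$, since the $u$ ones and the $u$ zero-blocks alternate around the circle, fixing a reference one and reading off the cyclic word of block lengths in the alphabet $\{\text{long},\text{short}\}$ recovers the labeling completely; this word has $s$ long and $u-s$ short letters. The generator of $\wtil{W}_+$ rotates the labeling, which corresponds precisely to cyclically shifting the word, so the construction descends to a bijection between $\Pvf{u}/\wtil{W}_+$ and binary necklaces of length $u$ with $s$ long beads. For $m=0$ I would use the complementary encoding sketched before the statement: cutting a necklace at its $s$ long beads produces $s$ cyclically ordered runs of short beads of lengths $a_1,\dots,a_s \geq 0$ with $\sum_i a_i = u-s$, and $(a_1+1,\dots,a_s+1)$ is a cyclic composition of $u$ into $s$ positive parts, i.e.\ exactly an element of the $m=0$ set, with necklace rotation again matching $\wtil{W}_+$. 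Checking that these maps are inverse to the obvious reverse constructions, and mutually compatible, establishes that all the orbit sets $\Pvf{u}/\wtil{W}_+$ are canonically in bijection, independently of $m$.

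It then remains to count binary necklaces of length $u$ with $s$ beads of the distinguished colour. Applying Burnside's lemma to the rotation action of $\Z/u$ on the $\binom{u}{s}$ corresponding linear words gives the count $\frac{1}{u}\sum_{d \mid \gcd(s,u)} \phi(d)\binom{u/d}{s/d}$. The sole role of the coprimality hypothesis is to force this action to be free: a word fixed by a nontrivial rotation is periodic with some period $d<u$ dividing $u$, so $u/d>1$ divides both the number $s$ of distinguished beads and $u$, hence divides $\gcd(s,u)=1$, a contradiction. Therefore every orbit has size $u$, only the $d=1$ term survives, and the number of necklaces is $\frac{1}{u}\binom{u}{s}$, as required.

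The two bijections are elementary once Proposition \ref{AE2023-lemma8.6} is in hand; the part needing care is the bookkeeping of the $\wtil{W}_+$-action, so that ``relative position on the circle'' genuinely parametrises orbits and not individual labelings, together with the degenerate endpoints---$m=1$, where short blocks are empty and consecutive ones occur, and $m=0$, where the block picture collapses to a composition. The genuinely load-bearing step is the freeness argument in the count: it is exactly here that $\gcd(s,u)=1$ is used, and it is what makes the P\'olya--Burnside sum collapse to the single clean binomial term.
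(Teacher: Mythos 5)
Your proposal is correct and takes essentially the same route as the paper: the block-length encoding of Proposition \ref{AE2023-lemma8.6} into binary necklaces with $s$ long beads, the run-length correspondence with the $m=0$ case, and a Burnside count in which $\gcd(s,u)=1$ collapses the sum $\frac{1}{u}\sum_{d \mid \gcd(s,u-s)}\varphi(d)\binom{u/d}{s/d}$ to the single term $\frac{1}{u}\binom{u}{s}$ (the paper cites the standard necklace formula where you argue freeness of the rotation action directly, an immaterial difference). Your insistence that the bijection and the count live at the level of $\wtil{W}_+$-orbits, since the raw sets have $m$-dependent cardinality $\frac{n}{u}\binom{u}{s}$, is a correct reading of what the paper leaves implicit.
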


\begin{proof}
We have already explained the bijections between the sets $\check{P}_{+, f}^u$, and with the set of necklaces. The following formula for the number $N(s_1, s_2)$ of two coloured necklaces with $s_i$ beads of colour $i$, follows from a simple application of Burnside counting {\cite[p. 491]{Stanley.vol2}}:
\[
N(s_1, s_2) = \frac{1}{u} \sum_{d | \gcd(s_1, s_2)} \varphi(d) \frac{(u/d)!}{(s_1/d)!(s_2/d)!}.
\]
Here $u=s_1+s_2$ and $\varphi(n) = \sum_{d \mid n} d$ is the Euler totient function. Since in our case $\gcd(s, u) = 1$ we easily obtain the formula given in the theorem statement.
\end{proof}

\begin{rem}
The following formula has been given in {\cite[Proposition 3.4]{SXY2024}} for the number of irreducible $W(\mbo_u, p/u)$-modules:
\[
\frac{1}{|W_f|}u^{\ell-\ell_f}\prod_{i=1}^{\ell_f} (u-m_i).
\]
Here $\{m_1, \ldots, m_{\ell_f}\}$ are the exponents of the Weyl group $W_f$. In the present case $W_f \cong S_u^m \times S_s$ (the direct product of $m$ copies of the symmetric group $S_u$ and one copy of $S_s$), and the exponents of $S_k$ are the integers $1, 2, \ldots, k-1$. Thus the formula reproduces the cardinality obtained in Theorem \ref{thm:bijections} above by counting necklaces.
\end{rem}

We now work out the bijections between sets $\check{P}_{+, f}^u$ for $u=8$, $s=3$ and $m = 0, 1, 2$ in some detail. 
First we consider the admissible weights $\la = y(\rho - (3/8)\eta) - \rho$ of $W(\sll_3[3], 3/8)$. They are parametrised by certain pairs $(y, \eta)$ where $y \in W$ and $\eta \in P_8$. Since the nilpotent orbit is principal, the weight $\eta$ must be nondegenerate, and there is no restriction on $y$ coming from $\g_0$-integrability. Up to the action of $\wtil{W}_+ \cong \Z/3$ there are $7$ distinct weights $\eta$, namely
\begin{align*}
&(6, 1 \mid 1), \quad
(5, 2 \mid 1), \\
&(4, 3 \mid 1), \quad
(3, 4 \mid 1), \\
&(2, 5 \mid 1), \quad
(4, 2 \mid 2), \\
&(3, 3 \mid 2).
\end{align*}
The $W$-algebra thus has $7$ irreducible modules, given by reductions $H^0_{f, -}(L_k(\la))$.

Next we consider $W(\sll_{11}[8, 3], 11/8)$. We consider the left adjusted pyramid corresponding to the partition $[8, 3]$, from which we read off that $\D_{0, +} = \{\al_1, \al_3, \al_5\}$. The admissible weights $\la = y(\rho - (11/8)\eta) - \rho$ are parametrised by certain pairs $(y, \eta)$ where $y \in W$ and $\eta \in P_8$. The repleteness condition on $\la$ yields restrictions on the pairs $(y, \eta)$. Namely $\la$ must integrable with respect to $\al^\vee$ for the $\al \in \D_{0, +}$ and non integral with respect to all the other positive coroots.

Since $\D_0$ in this case consists of three copies of the root system of $\sll_2$, the condition on $\eta$ is that $(\eta, \al) = 0$ for exactly three nonconsecutive simple roots $\al$ of $\widehat{\sll}_{11}$. Since $\eta \in P_8$, we must have $(\eta, \al) = 1$ for each of the other eight simple roots $\al$. Up to cyclic permutation, the possible weights $\eta$ are exactly the following:
\begin{align*}
&(1, 1, 1, 1, 1, 1, 0, 1, 0, 1 \mid 0), \quad
(1, 1, 1, 1, 1, 0, 1, 1, 0, 1 \mid 0), \\
&(1, 1, 1, 1, 0, 1, 1, 1, 0, 1 \mid 0), \quad
(1, 1, 1, 0, 1, 1, 1, 1, 0, 1 \mid 0), \\
&(1, 1, 0, 1, 1, 1, 1, 1, 0, 1 \mid 0), \quad
(1, 1, 1, 1, 0, 1, 1, 0, 1, 1 \mid 0), \\
&(1, 1, 1, 0, 1, 1, 1, 0, 1, 1 \mid 0).
\end{align*}
There are again $7$ such weights, and we have listed them in the order in which they correspond to the weights for $W(\sll_3[3], 3/8)$. Indeed if we examine the first weight in the list above, the lengths of the blocks of ``$1$'' are $(6, 1, 1)$, which corresponds to the weight $(6, 1 \mid 1)$.

Finally we consider $W(A_{19}[8, 8, 3], 19/8)$. Using the left adjusted pyramid gives $\D_0$ isomorphic to $\D(\sll_3)^3 \oplus \D(\sll_2)^5$. By similar arguments as above, an admissible weight $\la = y(\rho - (19/8)\eta) - \rho$ is replete if and only if $(\eta, \al) = 0$ for simple roots $\al$ which come in (1) three pairs of consecutive simple roots, and (2) five other simple roots, such that no block of simple roots of length greater than $2$ arises. The weight $\eta$ must look, for example, like
\[
(\cdots, 0, 0, \cdots, 0, 0, \cdots, 0, 0, \cdots, 0, \cdots, 0, \cdots, 0, \cdots, 0, \cdots \mid 0),
\]
where the $\cdots$ are all strings of positive integers. Notice that there are exactly $8$ strings $(\cdots)$ here, and so since $\eta \in P_8$, each of these strings must consist of a single ``$1$''. One such weight is
\[
(1, 0, 1, 0, 1, 0, 1, 0, 1, 0, 1, 0, 0, 1, 0, 0, 1, 0 \mid 0),
\]
which corresponds under our bijections to $(1, 1, 1, 1, 1, 1, 0, 1, 0, 1 \mid 0)$ and to $(6, 1, \mid 1)$.


\subsection{Equality of $q$-characters}

From Proposition \ref{prop:H.la.char.thm} we now deduce the following.
\begin{thm}\label{thm:q-char-equalities}
The $q$-characters of irreducible modules related under the bijections of Theorem \ref{thm:bijections} coincide.
\end{thm}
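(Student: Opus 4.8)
The plan is to apply the character formula of Proposition \ref{prop:H.la.char.thm} to both families of $W$-algebras and show the two resulting infinite products agree under the bijection of Theorem \ref{thm:bijections}. The key observation from Proposition \ref{prop:H.la.char.thm} is that $\widetilde\chi_{H^0_{f,-}(L_k(\la))}(\tau)$ depends on $\la$ only through $\eta$, via the data $\ell = \rk(\g)$, $\dim(\g_0)$, and the multiset of values $(\eta,\al)$ for $\al \in \D$. So the theorem will reduce to a purely combinatorial comparison: I must show that for a weight $\eta \in \check P_{+,f}^u$ attached to $\sll_n[u^m,s]$ (with $n = um+s$) and its partner $\eta' \in \check P_{+,f}^u$ attached to the principal case $\sll_s[s]$ (i.e.\ $m=0$, $n=s$), the product \eqref{eq:H.la.char.thm} takes the same value. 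Since $u$ is fixed throughout, the factor $\prod_{n\geq 1}(1-q^{un})^\ell$ changes only through $\ell = n-1$, and the denominator $\prod(1-q^n)^{\dim(\g_0)}$ changes through $\dim(\g_0)$; the main content lives in the final product $\prod_{un+(\eta,\al)>0}(1-q^{un+(\eta,\al)})$ over roots $\al \in \D$ and $n \in \Z$.

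First I would fix $u$ and $s$ and set up the comparison between an arbitrary $m \geq 1$ case and the base case $m=0$ explicitly in coordinates, using the pyramid/necklace description from Section \ref{sec:type.A}. Recall from Theorem \ref{thm:bijections} that the bijection sends a necklace (equivalently, the block pattern of zeros in the affine labeling) to the $m=0$ weight $(a_1+1,\dots,a_s+1)$, where $a_i$ records lengths of strings of consecutive short blocks. The essential step is to compute the multiset $\{(\eta,\al) : \al \in \D_{\sll_n}\}$ in terms of the affine labeling of $\eta$. Writing $\eta$ as a labeling $(c_0,c_1,\dots,c_{n-1})$ of the affine Dynkin diagram of $\widehat{\sll}_n$ with $\sum c_i = u$, the pairing $(\eta, \al_{ij})$ with a positive root $\al_{ij}$ is a consecutive partial sum of the $c_i$ around the circle. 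I would then exhibit a weight-preserving and multiplicity-preserving correspondence between the partial sums arising for $\sll_n[u^m,s]$ and those for $\sll_s[s]$, after accounting for the change in $\ell$ and $\dim(\g_0)$: the extra zeros in the $m\geq 1$ labelings contribute factors $(1-q^{un})$ that I expect to cancel precisely against the discrepancy in the $(1-q^{un})^\ell$ and $(1-q^n)^{\dim(\g_0)}$ factors.

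Concretely, I would organise the product over roots by the ``gaps'' between consecutive nonzero labels. Because in the $m\geq1$ case the nonzero labels are separated by long blocks (length $m$) and short blocks (length $m-1$) of zeros, any partial sum $(\eta,\al)$ equals the corresponding partial sum for the $m=0$ weight \emph{plus} $u$ times the number of zero-blocks it spans. The substitution $n \mapsto n + (\text{number of spanned blocks})$ in the index of $q^{un+(\eta,\al)}$ should convert the $m\geq1$ product term-by-term into the $m=0$ product, up to leftover factors indexed by the zero labels themselves; tracking these leftover factors and matching them against the $\g_0$-dependent and $\ell$-dependent prefactors is where the bookkeeping concentrates. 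I would verify a small base case (such as the $u=8$, $s=3$ example already worked out before the statement) to fix conventions and signs before presenting the general argument.

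The main obstacle I anticipate is purely combinatorial rather than conceptual: carefully matching the \emph{multiplicities} in the two root-pairing multisets while correctly absorbing the $q^{un}$-type factors coming from the blocks of zeros into the ratio $\prod(1-q^{un})^\ell / \prod(1-q^n)^{\dim(\g_0)}$. The values of $\ell$ and $\dim(\g_0)$ differ between the two algebras (for $\sll_n$ one has $\ell = n-1$ and $\g_0 \cong \sll_{m+1}^s \oplus \sll_m^{u-s} \oplus \C^u$, whereas for $\sll_s$ one has $\ell = s-1$ and the principal grading), so the prefactors do \emph{not} agree on their own; the delicate point is that the discrepancy is exactly compensated by the surplus of root-pairing factors equal to multiples of $u$ in the large-$m$ case. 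I expect that once the correspondence of partial sums is made precise via the necklace description, this compensation is forced, and the remaining verification is a finite check of exponent multiplicities that can be carried out uniformly in $m$.
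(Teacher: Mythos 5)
Your reduction is the right one---by Proposition \ref{prop:H.la.char.thm} everything comes down to comparing $\ell$, $\dim(\g_0)$ and the multiset of pairings $(\eta,\al)$, and this is exactly how the paper begins---but your central mechanism is wrong. For $\eta \in \check{P}^u_{+,f}$ all pairings $(\eta,\al)$ lie in $[-u,u]$: in the $m \geq 1$ labelings the pairing of a positive root is simply the number of labels equal to $1$ in the corresponding interval of the Dynkin diagram, so multiples of $u$ never appear; in fact for roots joining boxes in ``tall'' columns the pairing agrees \emph{on the nose} with the corresponding pairing for the $m=0$ weight (crossing from one long block to the next passes $a_t$ short blocks and contributes $a_t+1$, which is precisely the corresponding label of the $m=0$ weight). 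So your claim that an $m \geq 1$ partial sum equals the $m=0$ partial sum plus $u$ times the number of zero-blocks spanned is false, and the proposed substitution $n \mapsto n + (\text{number of spanned blocks})$ is in any case vacuous: each root $\al$ contributes the full arithmetic progression $\prod_{N>0,\, N \equiv (\eta,\al) \bmod u}(1-q^N)$, which depends only on the residue class of $(\eta,\al)$ modulo $u$. The entire content of the theorem therefore sits in the \emph{multiplicities} with which each residue class occurs---exactly the part you defer to ``bookkeeping'' without a correct mechanism.

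The identity you actually need, and the one the paper proves, is that for each $k$ the combined multiplicity $\dim(\g_k)+\dim(\g_{u-k})-\dim(\g_0)$ is independent of $m$ (positive and negative roots in classes $k$ and $u-k$ feed the same progression, so only this symmetric combination is visible in the character). The paper establishes this by induction on $m$: the increment $\delta(m,k)=\dim(m+1,k)-\dim(m,k)$ counts pairs of $k$-separated boxes meeting the new bottom row of the pyramid, and an explicit count gives $\delta(m,k)+\delta(m,u-k)=2(mu+s)+u$, which is exactly the increment of $\dim(\g_0)$. Your direct, non-inductive comparison can be repaired along these lines: reading the column heights $h(c)\in\{m,m+1\}$ cyclically from the necklace, the class-$k$ multiplicity (for $k\not\equiv 0$) is $\sum_{c\in\Z/u\Z} h(c)h(c+k)$, and writing $h(c)=m+\epsilon(c)$ with $\epsilon(c)\in\{0,1\}$ and $\sum_c\epsilon(c)=s$ this equals $um^2+2ms+\sum_c\epsilon(c)\epsilon(c+k)$, while $\dim(\g_0)=um^2+2ms+(s-1)$; both excesses over the $m=0$ case are $um^2+2ms$ and cancel, recovering the character data of $W(\sll_s[s],s/u)$. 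As written, however, your argument has a genuine gap: the term-by-term matching rests on a false statement about the values of the pairings, and the multiplicity identity that actually drives the proof is never formulated.
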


\begin{proof}
We reduce what has to be proved to a combinatorial question about pyramids. Specifically, for $\g = \sll_n$ where $n = mu+s$, let us denote $\dim(\g_k)$ by $\dim(m, k)$. Then we are to prove that
\[
\dim(m, k) + \dim(m, u-k) - \dim(\g_0)
\]
is constant in $m$. We shall prove this by induction on $m$. Notice that $\delta(m, k) = \dim(m+1, k)-\dim(m, k)$ counts pairs of $k$-separated boxes in the pyramid in which one or both of the boxes lies in the bottom row. By separating into four cases, it is easy to see that
\begin{align*}
\delta(m, k) = {} & (u-k) + 2(m+1)\max\{0, s-k\} \\
&+ 2m\max\{0, (u-s)-k\} + (2m+1)\min\{s, u-s, k, u-k\}.
\end{align*}
Some elementary manipulations yield
\[
\delta(m, k) + \delta(m, u-k) = 2(mu+s) + u,
\]
which is also the difference between the values of $\dim(\g_0)$ for $n = (m+1)u+s$ and $n=mu+s$. So we are done.
\end{proof}

The equality of $q$-characters has technical consequences that will be useful for us below.
\begin{cor}\label{cor:self.dual}
The exceptional $W$-algebras $W(\sll_n[u^m, s], n/s)$ are self dual.
\end{cor}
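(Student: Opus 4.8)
The plan is to prove self-duality by producing an isomorphism between the vacuum module $V = W(\sll_n[u^m,s], n/s)$ and its contragredient $V'$, since a simple vertex algebra is self-dual precisely when $V' \cong V$ as a $V$-module. Because $V$ is rational and lisse, $V'$ is again an irreducible ordinary module; it has conformal weight $0$ and, being the graded dual of $V$, carries the same normalised $q$-character $\widetilde\chi_{V'} = \widetilde\chi_V$. By Theorem \ref{thm:rationality.thm} we may write $V' = H^0_{f,-}(L_k(\la'))$ for a unique class in $\prin^k_\circ/\sim$, equivalently for a unique parameter $\eta' \in \Pvf{u}/\wtil{W}_+$. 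The entire problem is thus to identify $\eta'$ with the vacuum parameter $\eta_{\mathrm{vac}}$.

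First I would locate the contragredient at the level of parameters. Duality on highest-weight $\what\g$-modules is governed by the longest Weyl element and is compatible with the $(-)$-variant reduction $H^0_{f,-}(-)$; tracing it through the parametrisation $\la = y(\rho - (p/u)\eta)-\rho$ of Proposition \ref{AE2023-lemma8.6} one expects duality to act on $\Pvf{u}/\wtil{W}_+$ by the involution $*$ induced by the diagram automorphism of $\what{\sll}_n$, that is, by reversing the cyclic labelling (reversal of the underlying necklace from Theorem \ref{thm:bijections}). This is consistent with Proposition \ref{prop:H.la.char.thm}: the product formula depends on $\eta$ only through the multiset $\{(\eta,\al) : \al \in \D\}$, which is manifestly $*$-invariant, so $\widetilde\chi_{M_{\eta^*}} = \widetilde\chi_{M_\eta}$, exactly as the equality $\widetilde\chi_{V'} = \widetilde\chi_V$ requires. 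Crucially, $*$ commutes with the natural bijections between the sets $\Pvf{u}$ for varying $m$.

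It then remains to check that the vacuum parameter is fixed by $*$. I would reduce this to the principal case $m=0$: the bijections of Theorem \ref{thm:bijections} match vacua to vacua and commute with $*$, so it suffices to verify that the vacuum of the principal $W$-algebra $W(\sll_s[s], s/u)$ corresponds to a reversal-symmetric necklace, which can be read off directly from its balanced labelling (or inferred from the known self-duality of principal admissible $W$-algebras). Once $\eta_{\mathrm{vac}}^* = \eta_{\mathrm{vac}}$ is established we conclude $V' = M_{\eta_{\mathrm{vac}}^*} = M_{\eta_{\mathrm{vac}}} = V$, giving self-duality.

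The hard part will be the first step, namely establishing cleanly that contragredient duality corresponds to the diagram involution $*$ on $\Pvf{u}/\wtil{W}_+$, with correct handling of the Weyl-group factor $y$ and the $\wtil{W}_+$-equivalence; the $q$-character alone cannot settle the matter, since it is $*$-invariant, so one genuinely needs the module-level statement. A cleaner route, bypassing this bookkeeping, would be to show that \emph{the vacuum is the unique irreducible module of conformal weight $0$}: the conformal weights are computable from the Kac--Roan--Wakimoto formula and, via Theorem \ref{thm:q-char-equalities}, comparable with the principal case, and since $V'$ has conformal weight $0$ this alone forces $V' \cong V$. I would pursue this second argument as the primary line and retain the parameter-involution description as corroboration.
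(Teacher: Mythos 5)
Your proposal has a genuine gap at its load-bearing step. Both of your routes reduce self-duality to an identification of the contragredient $V'$ within the list of irreducibles from Theorem \ref{thm:rationality.thm}, and both leave the decisive claim unproved. In your primary line, everything rests on the assertion that \emph{the vacuum is the unique irreducible module of conformal weight $0$}. You say this is ``computable'' from the Kac--Roan--Wakimoto formula, but you never establish it: by \eqref{eq:h.la.fla} it amounts to showing that $|\rho - (n/u)\eta|^2 = |\rho - (n/u)x^0|^2$ forces $\eta \equiv x^0$ modulo $\wtil{W}_+$, a nontrivial combinatorial statement about the weights of $\check{P}_{+, f}^u$ (note these theories are non-unitary, so $h=0$ is not the minimal conformal dimension and no positivity argument is available). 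Nothing in the paper, nor in standard references, hands you this uniqueness for free, even after reducing to the principal case $m=0$ via the conformal-dimension comparison. Your backup line is in no better shape: the claim that contragredient duality acts on $\check{P}_{+, f}^u/\wtil{W}_+$ by necklace reversal is exactly the hard module-level bookkeeping you yourself flag as open (the paper states the analogous fact only as an unproved aside, in Remark \ref{rem:equal.q.char}), and without it the $*$-invariance of the vacuum parameter proves nothing. Note also that reversal pairs $\eta \neq \eta^*$ automatically share both $q$-character and conformal weight, so your uniqueness claim is genuinely at risk and cannot be waved through.

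The paper avoids all of this with a much shorter argument that your proposal misses: by Li's theorem \cite{Li.94} the space of invariant bilinear forms on $V$ is $(V_0/L_1V_1)^*$, so one only needs $V_0 = \C\vac$ and $V_1 = 0$. For $m=0$ this holds because principal $W$-algebras are strongly generated in conformal weights $\geq 2$, and Theorem \ref{thm:q-char-equalities} --- which you invoke only to match characters of modules --- transfers the vanishing of the weight-one space to all $m$, since graded dimensions are read off from the $q$-character. Simplicity then upgrades the resulting one-dimensional space of invariant forms to a nondegenerate one. In other words, self-duality here is a statement about the existence of an invariant form, not about locating $V'$ in the classification, and the character equality you already have is strong enough to settle it without any analysis of conformal weights or of the duality involution on parameters. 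As written, your proof does not close; repaired along your own lines it would require proving the $h=0$ uniqueness (or the reversal description of duality), either of which is substantially more work than the statement deserves.
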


\begin{proof}
By a theorem of Li \cite{Li.94} the vector space of invariant bilinear forms on a conformal vertex algebra $V$ is isomorphic to the vector space $V_0 / L_1 V_1$. For $m=0$ our $W$-algebra is of principal type, hence strongly generated by vectors of conformal weight $2$ and higher, so $V_0 = \C\vac$ and $V_1=0$. By equality of $q$-characters we have $V_0 = \C\vac$ and $V_1=0$ in all the other cases too. So in all cases there is a $1$-dimensional space of invariant bilinear forms. These vertex algebras are simple, so any nonzero invariant bilinear form is non degenerate, and so we are done.
\end{proof}

\begin{rem}\label{rem:equal.q.char}
If two necklaces are related by order reversal (for example $(4, 3 \mid 1)$ and $(3, 4 \mid 1)$ above) then the corresponding irreducible modules over the $W$-algebra have the same $q$-character. In fact this operation corresponds to duality at the level of modules.
\end{rem}

\begin{rem}
The $q$-characters of irreducible $W$-algebra modules can be interpreted in terms of counting of cylindrical partitions \cite{FW2016} and planar partitions \cite{PW2024}. Perhaps Theorem \ref{thm:q-char-equalities} has a combinatorial interpretation in these terms.
\end{rem}

\subsection{Equivalence of modular data}\label{sec:type.A.moddata}

We now set about proving the following theorem.
\begin{thm}\label{thm:S-mat.equal}
The bijections of Theorem \ref{thm:bijections} induce equivalences of modular data. In particular the vertex algebras $W(\sll_n[u^m, s], n/s)$ where $n = mu+s$, have the same fusion rules for all $m$.
\end{thm}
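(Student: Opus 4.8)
Let $\phi$ denote the bijection of Theorem \ref{thm:bijections} between the sets of irreducible modules for varying $m$. Each of the vertex algebras in question is lisse, rational and self dual (Corollary \ref{cor:self.dual}), so its representation category is a modular tensor category and the modular datum formalism of Section \ref{subsec:mod.data} applies; by the Verlinde formula it therefore suffices to prove that $\phi$ intertwines the matrices $S$ and $T$, the equality of fusion rules being then automatic. By transitivity it is enough to compare a general $m \ge 1$ with the principal case $m = 0$, namely $W(\sll_s[s], s/u)$.

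I would first dispose of the $T$-matrix, whose diagonal entries are $e^{2\pi i(h_i - c/24)}$. Computing the conformal dimension of $H^0_{f,-}(L_k(\la))$ from the formula of {\cite[Remark 2.2]{KRW03}} and the central charge from {\cite[Theorem 2.2]{KRW03}}, one reduces the invariance of $h_i - c/24 \pmod{\Z}$ under $\phi$ to a finite combinatorial identity in the geometry of the pyramid. This should run entirely parallel to the dimension count in the proof of Theorem \ref{thm:q-char-equalities}: since the product formula \eqref{eq:H.la.char.thm} depends on $\la$ only through $\eta$, the relevant data are again the column and row statistics of the pyramid, which were shown there to be $m$-independent.

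The substance of the proof is the $S$-matrix, for which I would start from the boundary formula \eqref{eq:S.orig.firstsimp},
\[
S_{\la,\la'} = C(\la,\la') \sum_{w \in W(\Gamma)} \varepsilon(w)\, e^{-2\pi i\frac{n}{u}(\beta, w(\beta'))} \prod_{\al \in \D_{0,+}} \frac{(w(\al), \xi)}{(\al, \xi)},
\]
with $h^\vee = n$, and exploit type $A$. By Lemma \ref{lem:replete.integrability.type.A} the weight $\beta = -y(\eta)$ is orthogonal to $\D_0$ and regular elsewhere, so in coordinates $\beta = (\beta_1, \dots, \beta_n)$ it is constant on the columns of the pyramid and separates distinct columns; the same holds for $\beta'$, while $W_0 \cong S_{m+1}^{s} \times S_m^{u-s}$ is generated by the within-column transpositions. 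Extending the summand to all of $S_n$ via the determinant identity $\sum_{w \in S_n}\varepsilon(w) e^{-\frac{2\pi i}{u}(\mu, w(\nu))} = \det\!\big[e^{-\frac{2\pi i}{u}\mu_i\nu_j}\big]$ of Section \ref{subsec:W29Smat}, the matrix acquires repeated rows and columns (one block per pyramid-column) and degenerates; the restriction to the coset representatives $W(\Gamma)$ together with the factor $\prod_{\al}(w(\al),\xi)/(\al,\xi)$ is exactly the regularisation extracting the leading nonzero minor. In this way $S_{\la,\la'}$ is expressed, up to the sign-and-constant factor $C(\la,\la')$, through a reduced determinant indexed by the distinct column values of $\beta$ and $\beta'$; in the principal case $m=0$ this is simply the bare $s \times s$ determinant $\det[e^{-\frac{2\pi i s}{u}\beta_i\beta'_j}]$ over $S_s$.

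The key step — and the one I expect to be the main obstacle — is to show that this reduced determinant is carried to the $m=0$ determinant by the necklace bijection of Theorem \ref{thm:bijections}. Two features vary with $m$ and must be reconciled: the prefactor $\tfrac{n}{u} = m + \tfrac{s}{u}$ and the column data $\beta$. Splitting $e^{-2\pi i\frac{n}{u}(\,\cdot\,)} = e^{-2\pi i\frac{s}{u}(\,\cdot\,)} e^{-2\pi i m(\,\cdot\,)}$, one must verify that the integral translation part is absorbed using the control of the pairings $\langle \beta, \al^\vee\rangle$ modulo $u$ afforded by Lemma \ref{lem:replete.integrability.type.A}, and that the $u-s$ short-column contributions cancel against the regularising factors, leaving precisely the determinant attached to the $m=0$ weight $(a_1+1, \dots, a_s+1)$. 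This comparison determines the two $S$-matrices up to $C(\la,\la') = C_0\,\varepsilon(\la)\varepsilon(\la')$, i.e. up to an overall positive constant fixed by unitarity and a family of per-index signs. The residual signs are then eliminated by Lemma \ref{lem:S.untwistable}: two modular data related by $\widetilde S_{ij} = \varepsilon_i\varepsilon_j S_{ij}$ must coincide. Hence $\phi$ intertwines $S$ exactly, and with the $T$-matrix step the equivalence of modular data, and the equality of fusion rules, follows. As a consistency check, order-reversal of necklaces matches charge conjugation $C = S^2$ on both sides (Remark \ref{rem:equal.q.char}).
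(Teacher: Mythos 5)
Your overall skeleton is sound and matches the paper in outline: reduce to comparing general $m$ with $m=0$, handle $T$ by computing conformal dimensions and the central charge (the paper does this explicitly in Propositions \ref{prop:N.const} and \ref{prop:c.formula} --- note $T$-equality does \emph{not} follow from Theorem \ref{thm:q-char-equalities}, since \eqref{eq:H.la.char.thm} omits the $q^{h-c/24}$ prefactor, so the direct computation is unavoidable), start the $S$-comparison from \eqref{eq:S.orig.firstsimp}, and kill residual signs with Lemma \ref{lem:S.untwistable}. But the central mechanism you propose for the $S$-matrix has a genuine gap. You view the $W(\Gamma)$-sum as a regularisation of a degenerate $n\times n$ determinant whose rows repeat along pyramid \emph{columns} (since $\beta$ is constant on columns by Lemma \ref{lem:replete.integrability.type.A}), and assert that the factor $\prod_{\al\in\D_{0,+}}(w(\al),\xi)/(\al,\xi)$ ``extracts the leading nonzero minor'', i.e.\ a plain reduced determinant indexed by the $u$ distinct column values. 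This is unsubstantiated and, as stated, not how confluent limits behave: when groups of arguments of $\det\bigl[e^{-2\pi i\mu_i\nu_j/u}\bigr]$ collide on \emph{both} sides, the regularised limit is a doubly confluent determinant whose entries acquire polynomial (derivative) factors, not a minor of the original shape; and the step you yourself flag as ``the main obstacle'' --- cancellation of the $u-s$ short-column contributions leaving exactly the $s\times s$ determinant of $W(\sll_s[s],s/u)$ --- is precisely the content of the theorem and is never carried out. The paper's route avoids columns entirely and works with \emph{rows}: it first extends the sum from $W(\Gamma)$ to all of $W$ (Lemma \ref{lem:Macdonald}, Proposition \ref{prop:S.allW}), then specialises the auxiliary parameter $\xi\in\h^f$, which by Lemma \ref{lem:W0Wf} and Corollary \ref{cor:hf.simplification} kills every $w\notin W^fW_0$ and collapses the sum to $W^f\cong S_u^m\times S_s$ as in \eqref{eq:Swf}; this sum factorises over the rows of the pyramid \eqref{eq:Swf.factored}, and Lemma \ref{lem:u.proj.rho} shows each length-$u$ row factor is a constant times a sign, because $\pi_r\beta$ lies in the $W^{(r)}$-orbit of $\rho^{(r)}$. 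Nothing needs to ``cancel''; the long rows contribute trivially by regularity and the bound $(\theta,\mu)<u$ forced by $\eta\in P_+^u$.

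The second gap is the phase bookkeeping, where your appeal to Lemma \ref{lem:replete.integrability.type.A} is insufficient. Since $\beta,\beta'\in P$ and not $Q$, the pairing $(\beta,w(\beta'))$ is not integral, so the splitting $e^{-2\pi i\frac{n}{u}(\cdot)}=e^{-2\pi i\frac{s}{u}(\cdot)}e^{-2\pi im(\cdot)}$ does not dispose of the $m$-part for free. What one can legitimately do (as in \eqref{eq:W.s.reduction}) is pull out $e^{-2\pi im(\pi_s\beta,\pi_s\beta')}$, using $w(\beta')-\beta'\in\Z\D^{(s)}$ and $(\pi_s\beta,\al)\in\Z$ for $\al\in\D^{(s)}$, together with the factor $e^{-2\pi i\frac{n}{u}(\pi_f\beta,\pi_f\beta')}$ from the $\h^f$-component. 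This leaves a genuinely pair-dependent unimodular phase $e^{-2\pi i\langle\beta,\beta'\rangle}$ with $\langle\beta,\beta'\rangle=m(\pi_s\beta,\pi_s\beta')+\frac{n}{u}(\pi_f\beta,\pi_f\beta')$, and Lemma \ref{lem:S.untwistable} \emph{cannot} absorb it: that lemma only applies once one knows the phase factorises as $C_0\,\varepsilon(\beta)\varepsilon(\beta')$ with signs. Establishing this factorisation is Proposition \ref{prop:s.f.relation}, and its proof is not formal: it rests on the identity $\frac{n}{u}(x,y)=\frac{1}{2}\kappa_{\g_0}(x,y)$ for $x,y\in\h^f$ (Lemma \ref{lem:killing.relation}), which is obtained by a vertex-algebra argument --- the weight-one subspace of the simple $W$-algebra vanishes by the character formula, forcing the Heisenberg level $\phi$ of {\cite[Theorem 2.4(c)]{KRW03}} to be zero --- together with the explicit Killing-form computations of Lemma \ref{lem:swapper} and the integrality $\pi_u\beta\in\rho^{(u)}+Q$ from Lemma \ref{lem:u.proj.rho}. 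This input has no counterpart in your outline; without it your argument determines the two $S$-matrices only up to an unknown pair-dependent phase, which is too weak to conclude.
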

The equalities of $S$-matrices and of $T$-matrices will be proved in Sections \ref{subsec:S-matrices} and \ref{subsec:T-matrices}, respectively.
\begin{rem}
Let $V$ be a rational vertex algebra, and suppose the conformal dimensions of the irreducible $V$-modules are distinct from one another. Then the normalised $q$-characters of these modules are linearly independent, and so the $S$-matrix of $V$ can be deduced from the $S$-transformation of these normalised characters. If this condition holds for two of our exceptional $W$-algebras, then the equality of $S$-matrices can be deduced from the equality of $T$-matrices together with the equality of (unnormalised) characters. However, in general, distinct irreducible modules might share the same $q$-character, as happened for $W(\sll_3[3], 3/8)$ in Remark \ref{rem:equal.q.char}.
\end{rem}

The fusion rules of principal $W$-algebras have been determined in \cite{FKW}, so Theorem \ref{thm:S-mat.equal} reduces the determination of fusion rules of all boundary $W$-algebras in type $A$ to this known case. The consequences of Theorem \ref{thm:S-mat.equal} go beyond the boundary $W$-algebras, however, since the fusion rules of exceptional $W$-algebras in general tend to exhibit the ``factorisation'' phenomenon alluded to in the introduction. The fusion rules in general are expressed as products of fusion rules of a simple affine vertex algebra with those of a boundary $W$-algebra. We make this more precise in the following corollary.

To state the corollary, we recall the notation $\CF(V)$ for the Grothendieck group of the modular tensor category of $V$-modules, introduced in Section \ref{subsec:mod.data}.
\begin{cor}
Let $n = um+s$ where $1 \leq s \leq u-1$ and $s$ is coprime to $u$. Assume further that $u$ and $s$ are odd. Then
\[
\CF(W(\sll_n[u^m, s], p/u)) \cong \CF(L_{u-s}(\sll_s))^{\text{int}} \otimes \CF(L_{p-n}(\sll_n)).
\]
\end{cor}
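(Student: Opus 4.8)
The plan is to assemble three facts: the factorisation of the fusion ring of a type $A$ exceptional $W$-algebra into a boundary factor and an affine factor; the equality of boundary fusion rings across the family $m \mapsto W(\sll_n[u^m,s], n/u)$ furnished by Theorem \ref{thm:S-mat.equal}; and the Frenkel--Kac--Wakimoto description \cite{FKW} of the fusion rules of the principal $W$-algebra. First I would write
\[
\CF(W(\sll_n[u^m, s], p/u)) \cong \CF(W(\sll_n[u^m, s], n/u)) \otimes \CF(L_{p-n}(\sll_n)),
\]
isolating the boundary $W$-algebra (the case $p = h^\vee = n$) together with the affine vertex algebra $L_{p-n}(\sll_n)$, which sits at the nonnegative integer level $p-n = p-h^\vee$ and is therefore rational. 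I would then apply Theorem \ref{thm:S-mat.equal}, specialised to $m=0$, where the orbit $[u^0,s]=[s]$ is principal and the boundary algebra is exactly $W(\sll_s[s], s/u)$, obtaining $\CF(W(\sll_n[u^m,s], n/u)) \cong \CF(W(\sll_s[s], s/u))$ for every $m$. Finally I would invoke \cite{FKW} to identify $\CF(W(\sll_s[s], s/u))$ with $\CF(L_{u-s}(\sll_s))^{\text{int}}$ and substitute.

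For the factorisation step I would appeal to the results of \cite{AE2023} underlying the factorisation phenomenon described in the introduction. The required hypotheses all hold: $W(\sll_n[u^m,s], p/u)$ is exceptional, hence lisse and rational, and is self dual by Corollary \ref{cor:self.dual}, so by \cite{Huang-rigidity} its module category is a modular tensor category and $\CF$ is a commutative ring (Section \ref{subsec:mod.data}). The factorisation respects the parametrisation of irreducible modules by admissible weights $\la = y(\nu - (p/u)\eta) - \rho$: the datum $\nu \in P_+^{p,\text{reg}}$ governs the affine factor $L_{p-n}(\sll_n)$ while the datum $\eta \in \check{P}_{+,f}^u$ governs the boundary factor. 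In the boundary case $P_+^{h^\vee,\text{reg}} = \{\rho\}$, so only the $\eta$-datum survives and the affine factor degenerates to $L_0(\sll_n) = \C\vac$, consistent with the reduction to $m=0$ being immediate from Theorem \ref{thm:S-mat.equal}.

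The substantive input, and the step I expect to be the main obstacle, is the identification $\CF(W(\sll_s[s], s/u)) \cong \CF(L_{u-s}(\sll_s))^{\text{int}}$. By Proposition \ref{AE2023-lemma8.6} with $m=0$, the irreducible modules of $W(\sll_s[s], s/u)$ are indexed by the positive labelings of the affine Dynkin diagram of $\sll_s$ summing to $u$, taken modulo the $\Z/s$-rotation action of $\wtil{W}_+$. Subtracting $1$ from every label identifies these with the nonnegative labelings summing to $u-s$ that index the integrable $L_{u-s}(\sll_s)$-modules, the residual $\Z/s$ being the simple-current symmetry of $L_{u-s}(\sll_s)$. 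Because $\gcd(u,s)=1$ this action is free, so the orbit count $\tfrac{1}{s}\binom{u-1}{s-1} = \tfrac{1}{u}\binom{u}{s}$ matches the cardinality of Theorem \ref{thm:bijections}. The ring $\CF(L_{u-s}(\sll_s))^{\text{int}}$ is the simple-current quotient fusion ring carried by these orbits, and its identification with the $W$-algebra fusion ring is the content of \cite{FKW}. The hypotheses that $u$ and $s$ be odd enter precisely here: together with $\gcd(u,s)=1$ they guarantee the integrality of the simple-current monodromy charges, so that the quotient has integer fusion coefficients and defines a genuine commutative ring rather than merely a $\Z$-module. The delicate point overall is that all three isomorphisms must respect the multiplicative, and not merely the additive, structure simultaneously; establishing this rests on the compatibility of the factorisation with the explicit $S$-matrix formula \eqref{eq:S.orig}, with Lemma \ref{lem:S.untwistable} available to pin down the residual signs.
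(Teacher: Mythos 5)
Your overall route coincides with the paper's (factorise into a boundary factor and an affine factor, apply Theorem \ref{thm:S-mat.equal} to reduce the boundary factor to $W(\sll_s[s], s/u)$, then invoke \cite{FKW}), but the first step — the tensor factorisation of $\CF(W(\sll_n[u^m,s],p/u))$ — is precisely what the paper has to prove here, and you treat it as citable. It is derived from the explicit coefficient formula \eqref{eq:aB}: since $\gcd(u,n)=1$, one may choose representatives of $\prin^k_\circ/\sim$ (parametrised, note, by the \emph{diagonal} quotient $P^{p,\text{reg}} \times \check{P}^u_{+,f}/\wtil{W}_+$, not a product of quotients) with $\eta \in Q$, which kills the cross terms $e^{-2\pi i[(\nu,\beta')+(\nu',\beta)]}$ and factors $S_{\la,\la'}$ into a constant times $S^{(1)}_{\eta,\eta'} S^{(2)}_{\nu,\nu'}$. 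The missing idea in your plan is that neither factor is literally the $S$-matrix you want: $S^{(2)}_{\nu,\nu'} = \sum_{w}\varepsilon(w)e^{-2\pi i \frac{u}{p}(w(\nu),\nu')}$ is the image of the $S$-matrix of $L_{p-n}(\sll_n)$ under the Galois automorphism $\sigma_u(\zeta_N)=\zeta_N^u$, and likewise $S^{(1)}$ is a Galois twist of the boundary $S$-matrix of Proposition \ref{prop:S-matrix-boundary} (the exponents carry $p/u$ rather than $n/u$). One must then observe that the Verlinde formula outputs integers, which are Galois-invariant, so the twist does not change the fusion rules. Without this argument your identification of the two factors with $\CF(L_{p-n}(\sll_n))$ and $\CF(W(\sll_n[u^m,s],n/u))$ does not go through; gesturing at ``compatibility with \eqref{eq:S.orig}'' does not supply it.

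You have also misplaced the parity hypothesis. In the paper, oddness of $u$ and $s$ is used at the outset: it guarantees that the Dynkin grading of $\sll_n$ attached to $[u^m,s]$ is good and even, whence $W(\sll_n[u^m,s],p/u)$ is self-dual by \cite[Proposition 4.2]{AEM2024}, which is what licenses the appeal to \cite{Huang-rigidity} and the Verlinde formula. Your citation of Corollary \ref{cor:self.dual} does not cover this, since that corollary treats only the boundary level $n/u$, not general $p/u$. Conversely, your claim that oddness enters in the FKW step via ``integrality of simple-current monodromy charges'' is not correct: \cite[Theorem 4.3']{FKW} identifies $\CF(W(\sll_s[s],s/u))$ with $\CF(L_{u-s}(\sll_s))^{\text{int}}$ with no parity restriction, and $\CF^{\text{int}}$ is by definition the \emph{subring} spanned by modules with highest weight in $Q \cap P_+^{u-s}$ (closed under fusion because the $P/Q$-grading is additive), not a simple-current quotient — your orbit count matches only because the $\Z/s$-action is free when $\gcd(u,s)=1$, with each orbit meeting $Q$ exactly once.
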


For $k \in \Z_+$ the irreducible $L_k(\g)$-modules are $L_k(\mu)$ where $\mu$ runs over $P_+^k$. The notation $\CF(L_{k}(\g))^{\text{int}}$ employed in the theorem statement means the subring spanned by those modules for which $\mu$ lies in $Q \cap P_+^k$.

\begin{proof}
The assumption on the parity of $u$ and $s$ guarantees that the Dynkin grading of $\sll_n$ corresponding to $\mbo_u$ is a good even grading. It follows that $W = W(\sll_n[u^m, s], p/u)$ is a self-dual vertex algebra {\cite[Proposition 4.2]{AEM2024}}. Since it is also lisse and rational, Huang's theorem \cite{Huang-rigidity} assures us that the fusion rules are computed by the Verlinde formula applied to the $S$-matrix \eqref{eq:S.orig}.

The set of irreducible $W$-modules consists of $H_{f, -}^0(L_k(\la))$ as $\la$ ranges over a set $\laset$ of representatives of $\prin_\circ^k / \sim$ from $P_{0, +}$. By {\cite[Theorem 8.7]{AE2023}} this set is in bijection with $P^{p, \text{reg}} \times \check{P}^{u}_{+, f} / \wtil{W}_+$, where the action of $\wtil{W}_+$ on $(\nu, \eta)$ is by simultaneous rotation of the labeled affine Dynkin diagrams corresponding to the two weights.

Since $u$ is coprime to $n = mu + s$ we make use of the $\wtil{W}_+$-symmetry to choose representatives $\la \in \laset$ such that $\eta \in Q$. This choice causes the formula \eqref{eq:aB} for $a(\la, \la')$ to simplify. Specifically the ``cross terms'' $e^{-2\pi i [(\nu, \beta') + (\nu', \beta)]}$ take the value $1$, and the $S$-matrix coefficient $S_{\la, \la'}$ given by formula \eqref{eq:S.orig} factors into (a constant times) a product of terms
\begin{align*}
S^{(1)}_{\eta, \eta'} &= \varepsilon(y) \varepsilon(y') \sum_{w \in W(\Gamma)} \varepsilon(w) e^{-2\pi i \frac{p}{u} (\beta, w(\beta'))} \prod_{\al \in \D_{0, +}} \frac{(w(\al), \xi)}{(\al, \xi)} \\
\text{and} \quad S^{(2)}_{\nu, \nu'} &= \sum_{w \in W} \varepsilon(w) e^{-2\pi i \frac{u}{p} (w(\nu), \nu')}
\end{align*}
(abusing notation here slightly, as $S^{(1)}_{\eta, \eta'}$ depends not only on $\eta$ and $\eta'$ but also on $y$ and $y'$ through a sign). The second of these sums is essentially the $(\nu, \nu')$ entry of the $S$-matrix of the simple affine vertex algebra $L_{p-n}(\sll_n)$ \cite{KP84}. More precisely, if $K_{\nu, \nu'}$ denotes the entry of the $S$-matrix for the pair of irreducible $L_{p-n}(\sll_n)$-modules $L_{p-n}(\nu-\rho)$ and $L_{p-n}(\nu'-\rho)$ then
\[
K_{\nu, \nu'} = \frac{i^{|\D_+|}}{|P/pQ|^{1/2}} \sum_{w \in W} e^{-\frac{2\pi i}{p} (w(\nu), \nu')}.
\]
These coefficients lie in the cyclotomic field $\Q(\zeta_N)$ where $N = pn$, let $\sigma_u \in \Gal(\Q(\zeta_N)/\Q)$ denote the automorphism determined by $\sigma_u(\zeta_N) = \zeta_N^u$. Then we see that $S^{(2)}_{\nu, \nu'} = \sigma_u(K_{\nu, \nu'})$. Since the output of the Verlinde formula are integers, the Galois twist does not affect the fusion rules.

Similarly $S^{(1)}_{\eta, \eta'}$ is given by a Galois twist of the $S$-matrix coefficient of the boundary $W$-algebra $W(\sll_n[u^m, s], n/u)$ as in Proposition \ref{prop:S-matrix-boundary}. Due to Theorem \ref{thm:S-mat.equal} this $S$-matrix coincides with that of the principal $W$-algebra $W(\sll_s[s], s/u)$. The fusion rules of this principal $W$-algebra are well known {\cite[Theorem 4.3']{FKW}} (see also \cite[Section 8]{AE2019}) to be given by $\CF(L_{u-s}(\sll_s))^{\text{int}}$.

\end{proof}

\subsubsection{Equality of $S$-matrices}\label{subsec:S-matrices}

In Propositions \ref{prop:S-matrix} and \ref{prop:S-matrix-boundary} we have given the entries of the $S$-matrix in terms of a sum over the set $W(\Gamma)$. We shall now manipulate this sum into a more manageable form via a number of auxiliary lemmas. We consider $S(\h^*) = \C[\h]$, in which we may regard roots $\al \in \D$ as (linear) polynomials.
\begin{lemma}\label{lem:Macdonald}
As elements of $\C[\h]$ we have
\begin{align}\label{eq:Macdonald}
\frac{1}{|W|} \sum_{w \in W} \varepsilon(w) \prod_{\al \in \D_+} w\al = \prod_{\al \in \D_+} \al.    
\end{align}
\end{lemma}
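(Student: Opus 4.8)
The plan is to recognise \eqref{eq:Macdonald} as the skew-symmetrisation of the \emph{Weyl discriminant} $\pi := \prod_{\al \in \D_+} \al$, viewed as a degree-$|\D_+|$ element of $\C[\h] = S(\h^*)$, and to reduce everything to the single observation that $\pi$ is $W$-skew-invariant. Concretely, the left-hand side of \eqref{eq:Macdonald} is
\[
\frac{1}{|W|} \sum_{w \in W} \varepsilon(w)\, w\pi,
\]
so once we know $w\pi = \varepsilon(w)\,\pi$ for every $w \in W$, each summand becomes $\varepsilon(w)^2 \pi = \pi$, and the average collapses to $\pi$, which is exactly the right-hand side. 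Thus the entire content of the lemma is the skew-invariance of $\pi$, and after that the conclusion is immediate.

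First I would establish the skew-invariance $w\pi = \varepsilon(w)\,\pi$. The cleanest route is to verify it on the generating simple reflections $s_i$ and then extend multiplicatively, using that $\varepsilon \colon W \to \{\pm 1\}$ is a group homomorphism. For a simple reflection $s_i$ one uses the standard fact that $s_i$ permutes the set $\D_+ \setminus \{\al_i\}$ among itself and sends $\al_i \mapsto -\al_i$. Consequently
\[
s_i \pi = (s_i \al_i) \prod_{\al \in \D_+ \setminus \{\al_i\}} s_i \al = (-\al_i) \prod_{\al \in \D_+ \setminus \{\al_i\}} \al = -\pi = \varepsilon(s_i)\,\pi,
\]
and since any $w = s_{i_1} \cdots s_{i_r}$ one obtains $w\pi = (-1)^r \pi = \varepsilon(w)\,\pi$ by iteration (the parity is well defined because $\varepsilon$ is a homomorphism). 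Alternatively one can argue directly from lengths: $w$ sends exactly $\ell(w)$ positive roots to negative ones, and the map $\al \mapsto w\al$ (corrected by a sign on those roots) induces a bijection of $\D_+$ with itself, producing the factor $(-1)^{\ell(w)} = \varepsilon(w)$; this avoids any induction but amounts to the same counting.

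The only genuine obstacle is the combinatorial input that $s_i$ fixes $\D_+ \setminus \{\al_i\}$ setwise while flipping $\al_i$; this is a classical and elementary property of root systems (see e.g.\ Humphreys), and I would simply invoke it. A secondary point worth flagging, to keep the argument airtight, is that the skew-symmetrisation lands in the correct space: $\pi$ is nonzero (as a product of nonzero linear forms in the integral domain $\C[\h]$), so the equality $w\pi = \varepsilon(w)\pi$ is an identity of nonzero polynomials and the averaging step involves no cancellation surprises. With these two remarks in place the proof is essentially two lines beyond the skew-invariance, and no degree-counting or divisibility argument (which would otherwise determine the leading constant) is needed, since the constant is pinned down exactly by $\varepsilon(w)^2 = 1$.
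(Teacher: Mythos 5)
Your proposal is correct and takes essentially the same approach as the paper: both reduce the identity to the skew-invariance $w\pi = \varepsilon(w)\pi$ of $\pi = \prod_{\al \in \D_+} \al$, after which each summand equals $\varepsilon(w)^2\pi = \pi$ and the average collapses to $\pi$. The paper establishes skew-invariance precisely by the length-counting fact you mention as your alternative route ($w$ sends exactly $\ell(w)$ positive roots to negative ones), so the two arguments coincide in substance.
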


\begin{proof}
For each $w \in W$, the set $\{w(\al) \mid \al \in \D_+\}$ is obtained by taking $\D_+$ and replacing some roots $\al$ with $-\al$. The number of negative roots equals the length $\ell(w)$ of $w$. But $\varepsilon(w) = (-1)^{\ell(w)}$, so $\prod_{\al \in \D} w\al = \varepsilon(w) \prod_{\al \in \D_+} \al$. The equality asserted by the lemma, follows immediately.
\end{proof}

Let us consider the expression $(\beta, w(\beta'))$ which appears in \eqref{eq:S.orig.firstsimp}. We have chosen things so that $\beta = -y(\eta)$ pairs trivially with all $\al \in \D_{0}$ (and similarly for $\beta'$). Therefore $(\beta, w(\beta')) = (\beta, \beta')$ for all $w \in W_0$. Combining this observation with Lemma \ref{lem:Macdonald} above, allows us to change the range of summation in \eqref{eq:S.orig.firstsimp} from $W(\Gamma)$ to $W$. More precisely, we have:
\begin{prop}\label{prop:S.allW}
For $\la$, $\la' \in \laset$, with attendant choices as above, we have
\begin{align}\label{eq:S.allW}
S_{\la, \la'} = C(\la, \la') \sum_{w \in W} \varepsilon(w) e^{-2\pi i \frac{n}{u} (\beta, w(\beta'))} \prod_{\al \in \D_{0, +}} \frac{(w(\al), \xi)}{(\al, \xi)}.
\end{align}
\end{prop}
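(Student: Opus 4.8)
The plan is to derive \eqref{eq:S.allW} directly from the boundary formula \eqref{eq:S.orig.firstsimp} of Proposition \ref{prop:S-matrix-boundary} (recalling that $h^\vee = n$ for $\g = \sll_n$, so the exponent $\frac{h^\vee}{u}$ there is precisely $\frac{n}{u}$) by enlarging the index set of summation from $W(\Gamma)$ to the full Weyl group $W$. The key structural observation is that
\[
W(\Gamma) = \{w \in W \mid w(\D_{0,+}) \subset \D_+\}
\]
is exactly the set of minimal-length representatives of the left cosets $W/W_0$, where $W_0$ is the Weyl group of $\D_0$. Indeed, a root which is a nonnegative integer combination of positive roots must itself be positive, so the condition $w(\D_{0,+}) \subset \D_+$ is equivalent to $w$ sending the simple roots of $\D_0$ into $\D_+$, the standard characterisation of minimal coset representatives. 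Hence $W = \bigsqcup_{w \in W(\Gamma)} w W_0$, and I would evaluate the right-hand side of \eqref{eq:S.allW} coset by coset, fixing $w \in W(\Gamma)$ and summing the summand over $w w_0$ as $w_0$ ranges over $W_0$.

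First I would dispose of the exponential factor. By Lemma \ref{lem:replete.integrability.type.A} we have $\left<\beta, \al^\vee\right> = 0$ for all $\al \in \D_0$ (and likewise for $\beta'$), so both $\beta$ and $\beta'$ are fixed by every element of $W_0$. Consequently $(\beta, w w_0(\beta')) = (\beta, w(\beta'))$ is constant across the coset, and the exponential factors out of the inner sum. It then remains to evaluate
\begin{align*}
\sum_{w_0 \in W_0} \varepsilon(w w_0) \prod_{\al \in \D_{0, +}} \frac{(w w_0(\al), \xi)}{(\al, \xi)}.
\end{align*}
Here Lemma \ref{lem:Macdonald}, applied to the root system $\D_0$ and its Weyl group $W_0$, is the crucial input: reading that identity in $\C[\h]$, applying the linear operator $w$ to both sides, and then evaluating at $\xi$ (so that each linear polynomial $\gamma$ contributes $(\gamma, \xi)$) yields $\sum_{w_0 \in W_0} \varepsilon(w_0) \prod_{\al \in \D_{0,+}} (w w_0(\al), \xi) = |W_0| \prod_{\al \in \D_{0,+}} (w(\al), \xi)$. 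Extracting $\varepsilon(w)$ via $\varepsilon(w w_0) = \varepsilon(w)\varepsilon(w_0)$, the inner sum collapses to $|W_0|\,\varepsilon(w) \prod_{\al \in \D_{0,+}} \frac{(w(\al), \xi)}{(\al, \xi)}$.

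Assembling the two steps, the sum over $W$ equals $|W_0|$ times the sum over $W(\Gamma)$ appearing in \eqref{eq:S.orig.firstsimp}; rewriting $S_{\la, \la'}$ in terms of the former therefore replaces the prefactor $C(\la, \la')$ by $C(\la, \la')/|W_0|$, which remains of the form $C_0 \varepsilon(\la)\varepsilon(\la')$ with a new constant $C_0$, giving \eqref{eq:S.allW}. I expect the main subtlety to lie in the bookkeeping of the product term: one must check that the sign $\varepsilon(w_0)$ produced by the sign-flips in $\{w_0(\al) \mid \al \in \D_{0,+}\}$ is precisely the one supplied by Macdonald's identity, and that the full $W_0$-invariance of $\beta'$ (not merely some weaker statement) is genuinely at our disposal. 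Both points rest on having chosen $\la$ so that $\beta$ is orthogonal to $\D_0$, that is, on the $\g_0$-integrability hypothesis built into the definition of $\laset$.
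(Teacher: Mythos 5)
Your proof is correct and follows essentially the same route as the paper's: both rest on the coset decomposition $W = W(\Gamma)\,W_0$, the $W_0$-invariance of $\beta'$ guaranteed by Lemma \ref{lem:replete.integrability.type.A}, and Lemma \ref{lem:Macdonald} applied to $(\D_0, W_0)$, with the factor $|W_0|$ absorbed into the constant $C_0$. The only cosmetic difference is direction — you collapse the $W$-sum onto $W(\Gamma)$, while the paper expands the $W(\Gamma)$-sum of \eqref{eq:S.orig.firstsimp} out to $W$ — which is the same computation read backwards.
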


\begin{proof}
Let $w \in W(\Gamma)$, and consider
\begin{align*}
\prod_{\al \in \D_{0, +}} (w(\al), \xi) 
&= \prod_{\al \in \D_{0, +}} (\al, w^{-1}(\xi)) \\
&= \frac{1}{|W_0|} \sum_{w_0 \in W_0} \varepsilon(w_0) \prod_{\al \in \D_{0, +}} (w_0\al, w^{-1}(\xi)),
\end{align*}
where in the second line we used Lemma \ref{lem:Macdonald}. Invariance of $\beta'$ under $W_0$ means we can replace $(\beta, w(\beta'))$ with $(\beta, ww_0(\beta'))$. Clearly \eqref{eq:S.orig.firstsimp} reduces to \eqref{eq:S.allW}.
\end{proof}

\begin{rem}
The set $W(\Gamma)$ consists of the shortest element in each coset $wW_0$ of $W_0 \subset W$. In particular $|W(\Gamma)| = |W| / |W_0|$. 
\end{rem}

\begin{lemma}\label{lem:W0Wf}
The set $\{w \in W \mid w(\D_0) \cap \D^f = \emptyset\}$ coincides with $W^f W_0$.
\end{lemma}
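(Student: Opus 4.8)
The plan is to translate everything into the combinatorics of the left adjusted pyramid. Writing $W = S_n$ acting on the boxes $1, \dots, n$, the descriptions of $\D_0$ and $\D^f$ given above identify $W_0$ with the Young subgroup $\prod_{\text{columns } C} S_C$ permuting boxes within each column, and $W^f$ with the Young subgroup $\prod_{\text{rows } R} S_R$ permuting boxes within each row. Since $w(\al_{ij}) = \al_{w(i), w(j)}$ and $\al_{pq} \in \D^f$ precisely when $p, q$ share a row, the condition $w(\D_0) \cap \D^f = \emptyset$ becomes the purely combinatorial statement: for any two boxes $i \neq j$ in a common column, $w(i)$ and $w(j)$ lie in distinct rows; equivalently, for every row $R$ the preimage $w^{-1}(R)$ meets each column at most once. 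The inclusion $W^f W_0 \subseteq \{w : w(\D_0) \cap \D^f = \emptyset\}$ is the easy half and I would dispatch it first: if $w = w^f w_0$ with $w_0 \in W_0$, then $w_0(\D_0) = \D_0$, so $w(\D_0) = w^f(\D_0)$; as $w^f$ preserves rows and two boxes of a common column occupy distinct rows, $w^f$ carries each root of $\D_0$ out of $\D^f$.

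The reverse inclusion is where the work lies. Assuming $m \geq 1$ (the case $m=0$ being trivial, since then $W_0$ is trivial), the crucial observation is that each \emph{long} row — of which there are $m$, each of length $u$ — has length equal to the number of columns, which is also $u$. Thus for a long row $R$ the set $w^{-1}(R)$ consists of $u$ boxes lying in pairwise distinct columns among the $u$ columns, and so is forced to be a \emph{full} transversal: exactly one box of each column maps into $R$. Consequently, for every column $C$ the rows occupied by $w(C)$ include all $m$ long rows. Since the hypothesis makes $w|_C$ row-injective, $w(C)$ occupies exactly $|C|$ rows; comparing this with $|C|$ (the number of rows meeting $C$) and using that the only non-long row available is the single short row, one concludes that the rows occupied by $w(C)$ are precisely those meeting $C$ — the long rows when $C$ has height $m$, and the long rows together with the short row when $C$ has height $m+1$. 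This identification of the ``row content'' of each $w(C)$ is the step I expect to be the main obstacle, and it is exactly where the shape of the pyramid enters (much as numerical input drives Lemma \ref{lem:replete.integrability.type.A}).

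With this in hand the decomposition is immediate. For each column $C$ the map sending a row $r$ meeting $C$ to the row of $w(r, C)$ is, by the previous paragraph, a bijection of the set of rows meeting $C$; I would let $w_0 \in W_0$ act on column $C$ as the permutation of that column inducing the inverse bijection. Then $w w_0$ fixes the row of every box, i.e. $w w_0 \in W^f$, whence $w = (w w_0)\, w_0^{-1} \in W^f W_0$. (One may note in passing that $W^f \cap W_0 = \{e\}$, since a permutation preserving both rows and columns fixes every box, so this decomposition is in fact unique.) Everything outside the transversal argument is formal bookkeeping.
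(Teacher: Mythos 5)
Correct, and essentially the paper's own argument: the easy inclusion is handled exactly as in the paper's proof, and the reverse inclusion is the same factorization of $w$ into a column-preserving element of $W_0$ and a row-preserving element of $W^f$, which the paper dismisses with ``it is not difficult to see''. Your transversal count --- each long row has length $u$, equal to the number of columns, so it must receive exactly one box from every column, which then pins down the row content of $w(C)$ for each column $C$ --- correctly supplies exactly the step the paper omits (and is where the specific shape $[u^m, s]$ enters), so your write-up is a complete version of the printed sketch.
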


\begin{proof}
Let $\al = \al_{ij} \in \D_0$, which is to say $i$ and $j$ are in the same column. If $w = w^f w_0$ where $w^f \in W^f$ and $w_0 \in W_0$, and we think of these as permutations of the labels $\{1, 2, \ldots, n\}$ of the boxes, then $w_0$ will keep $i$ and $j$ in the same column, and $w^f$ cannot move them into the same row, i.e., $w(\al) \notin \D^f$. Conversely, if $w$ is some permutation of the labels which does \emph{not} send two labels in one column to a single row, it is not difficult to see that $w$ can be factored into a product consisting of a permutation preserving columns (an element of $W_0$) followed by a permutation preserving rows (an element of $W^f$).
\end{proof}

\begin{cor}\label{cor:hf.simplification}
Let $\xi \in \h^f$. For $w \in W$ outside the subset $W^f W_0$, there exists $\al \in \D_{0, +}$ such that $(w(\al), \xi) = 0$. 
\end{cor}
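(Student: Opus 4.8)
The plan is to obtain this corollary immediately from Lemma \ref{lem:W0Wf} together with the defining property of $\D^f$, so essentially no new work is required beyond unwinding definitions. First I would take the contrapositive of Lemma \ref{lem:W0Wf}: since that lemma identifies $W^f W_0$ with $\{w \in W \mid w(\D_0) \cap \D^f = \emptyset\}$, any $w \notin W^f W_0$ must satisfy $w(\D_0) \cap \D^f \neq \emptyset$. Hence there exists a root $\al \in \D_0$ with $w(\al) \in \D^f$.

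Next I would arrange for this root to be positive. Since $\D_0$ is a root system it is stable under negation, so exactly one of $\al$ and $-\al$ lies in $\D_{0, +}$; and because $\D^f$ is likewise stable under negation we have $w(-\al) = -w(\al) \in \D^f$ as well. Thus, replacing $\al$ by $-\al$ if necessary, we may assume $\al \in \D_{0, +}$ while still having $w(\al) \in \D^f$. Finally I would invoke the definition $\D^f = \{\gamma \in \D \mid \gamma(\h^f) = 0\}$: under the identification of $\h$ with $\h^*$ via the invariant form this says precisely that $(\gamma, \xi) = 0$ for every $\gamma \in \D^f$ and every $\xi \in \h^f$. Applying this with $\gamma = w(\al)$ yields $(w(\al), \xi) = 0$, which is exactly the claimed conclusion.

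There is no genuine obstacle here; all the content is packaged in Lemma \ref{lem:W0Wf}. The only two points demanding a moment of care are the reduction from an arbitrary root of $\D_0$ to a \emph{positive} root (handled by the symmetry of $\D_0$ and $\D^f$ under sign change) and the translation between the annihilation condition $\gamma(\h^f) = 0$ defining $\D^f$ and the bilinear pairing $(w(\al), \xi)$ appearing in the $S$-matrix formula \eqref{eq:S.allW}, which is immediate once one recalls that $\h$ and $\h^*$ are identified through the form.
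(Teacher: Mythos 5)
Your proof is correct and follows essentially the same route as the paper: Lemma \ref{lem:W0Wf} gives a root $\al \in \D_0$ with $w(\al) \in \D^f$, and the definition of $\D^f$ (via the identification of $\h$ with $\h^*$) forces $(w(\al), \xi) = 0$ for any $\xi \in \h^f$. Your write-up is if anything slightly more careful than the paper's, since you prove only the implication actually needed and handle the passage from $\D_0$ to $\D_{0,+}$ explicitly, whereas the paper phrases the vanishing condition as an equivalence and leaves the sign adjustment implicit.
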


\begin{proof}
Recall that $\D^f = \{\al \in \D \mid \al(\h^f) = 0\}$. Therefore the condition $\prod_{\al \in \D_{0, +}} (w(\al), \xi) = 0$ on $w \in W$ is the same as the condition that the set $w(\D_0) \cap \D^f$ be non empty.
\end{proof}

Due to Corollary \ref{cor:hf.simplification}, we now obtain a significant reduction in the formula \eqref{eq:S.allW} for the $S$-matrix.
\begin{prop}
The entries of the $S$-matrix of $W(\sll_n[u^m, s], n/u)$ are given by 
\begin{align}\label{eq:Swf}
S_{\la, \la'} = C(\la, \la') \sum_{w \in W^f} \varepsilon(w) e^{-2\pi i \frac{n}{u} (\beta, w(\beta'))}.
\end{align}
\end{prop}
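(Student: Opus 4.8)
The plan is to begin from the formula \eqref{eq:S.allW} of Proposition \ref{prop:S.allW}, in which $S_{\la,\la'}$ is written as a sum over all of $W$, and to collapse this onto $W^f$ by exploiting the freedom in the auxiliary parameter $\xi$. The first move is to specialise $\xi$ to lie in $\h^f$. This is legitimate provided $(\al,\xi)\neq 0$ for every $\al\in\D_{0,+}$: in type $A$ the root systems $\D_0$ and $\D^f$ are disjoint (``same column'' roots versus ``same row'' roots), so no $\al\in\D_{0,+}$ belongs to $\D^f$, each restriction $\al|_{\h^f}$ is nonzero, and a generic $\xi\in\h^f$ avoids all the hyperplanes $(\al,\xi)=0$.

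With such a $\xi$ fixed, Corollary \ref{cor:hf.simplification} shows that the product $\prod_{\al\in\D_{0,+}}(w(\al),\xi)$ vanishes unless $w\in W^f W_0$, so the summation in \eqref{eq:S.allW} restricts to $w\in W^f W_0$. The next step is to factor each such $w$ uniquely as $w=w^f w_0$ with $w^f\in W^f$ and $w_0\in W_0$; uniqueness holds because $W^f\cap W_0=\{e\}$, since an element preserving both the column partition and the row partition of the pyramid labels must fix every box. Writing $\varepsilon(w)=\varepsilon(w^f)\varepsilon(w_0)$ and using that $\beta'$ is orthogonal to $\D_0$ (by Lemma \ref{lem:replete.integrability.type.A}), hence fixed by $W_0$, the exponential becomes $e^{-2\pi i\frac{n}{u}(\beta,w^f(\beta'))}$, independent of $w_0$.

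It then remains to evaluate the inner sum over $w_0\in W_0$ of $\varepsilon(w_0)\prod_{\al\in\D_{0,+}}\frac{(w^f w_0(\al),\xi)}{(\al,\xi)}$. Here one uses that $\xi\in\h^f$ is fixed by $W^f$: each generating reflection $r_\al$ with $\al\in\D^f$ satisfies $r_\al(\xi)=\xi-(\xi,\al^\vee)\al=\xi$ because $(\al,\xi)=0$, so $(w^f w_0(\al),\xi)=(w_0(\al),\xi)$ and the inner sum no longer depends on $w^f$. Applying Macdonald's identity (Lemma \ref{lem:Macdonald}) to the root system $\D_0$ with Weyl group $W_0$, evaluated at $\xi$, gives $\sum_{w_0\in W_0}\varepsilon(w_0)\prod_{\al\in\D_{0,+}}(w_0(\al),\xi)=|W_0|\prod_{\al\in\D_{0,+}}(\al,\xi)$, so the inner sum collapses to the constant $|W_0|$. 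Absorbing this constant into $C(\la,\la')$ produces exactly \eqref{eq:Swf}.

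The genuinely type-$A$ input is confined to the combinatorial description of $\D_0$ and $\D^f$ via columns and rows of the pyramid, which underlies both the existence of a generic $\xi\in\h^f$ and the triviality of $W^f\cap W_0$; everything else (Corollary \ref{cor:hf.simplification}, the $W_0$-invariance of $\beta'$, the $W^f$-invariance of $\xi$, and Macdonald's identity) is formal. I expect the step requiring the most care to be the unique factorisation $w=w^f w_0$ together with the compatibility of signs, since it is precisely this that lets the double sum factor and the $w_0$-sum evaporate; but in the presence of Lemma \ref{lem:W0Wf} it amounts to bookkeeping.
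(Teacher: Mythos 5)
Your proof is correct and follows essentially the same route as the paper's: specialise $\xi$ to a generic element of $\h^f$, invoke Corollary \ref{cor:hf.simplification} to restrict the sum to $W^f W_0$, use the invariances $w^f(\xi)=\xi$ and $w_0(\beta')=\beta'$, and collapse the $W_0$-sum to the constant $|W_0|$ via Lemma \ref{lem:Macdonald}; your explicit check of the unique factorisation $w = w^f w_0$ (via $W^f \cap W_0 = \{e\}$) merely makes precise some bookkeeping the paper leaves implicit. The only cosmetic difference is that the paper treats $m=0$ (where $\h^f = 0$, so one cannot pick a nonzero $\xi \in \h^f$) separately by citing \cite{FKW}, whereas in your argument that case degenerates harmlessly since then $\D_{0,+} = \emptyset$ and $W^f = W$.
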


\begin{proof}
If $m = 0$ then we are in the principal case, $W^f = W$, and the formula has been known since \cite{FKW}. Suppose now that $m \geq 1$, then $\h^f \neq 0$, so we may specialise $\xi \in \h^f$. Because of Corollary \ref{cor:hf.simplification}, the expression \eqref{eq:S.allW} becomes
\begin{align*}
S_{\la, \la'} &= C(\la, \la') \sum_{w^f w_0 \in W^f W_0} \varepsilon(w^f w_0) e^{-2\pi i \frac{n}{u} ([w^f w_0](\beta), \beta')} \prod_{\al \in \D_{0, +}} \frac{(w^f w_0(\al), \xi)}{(\al, \xi)} \\
&= C(\la, \la') \sum_{w^f w_0 \in W^f W_0} \varepsilon(w^f w_0) e^{-2\pi i \frac{n}{u} (w^f(\beta), \beta')} \prod_{\al \in \D_{0, +}} \frac{(w_0(\al), \xi)}{(\al, \xi)},
\end{align*}
where we have used the condition $\xi \in \h^f$ to see that $w^f(\xi) = \xi$, and we have used $(\beta, \D_{0, +}) = 0$ to see that $w_0(\beta)=\beta$. Next we use \eqref{eq:Macdonald} to reduce this to \eqref{eq:Swf}.
\end{proof}

Let $R$ denote the set of rows of the pyramid, and $n_r$ the length of the row $r \in R$. Thus $n_r = s$ for one element of $R$ and $n_r = u$ for the remaining elements. Denote
\[
\D^{(r)} = \{\al_{ij} \mid \text{boxes $i$ and $j$ are in row $r$}\},
\]
and $W^{(r)} = \left<r_\al \mid \al \in \D^{(r)}\right>$. Clearly $(\al, \al') = 0$ whenever $\al \in \D^{(r)}$ and $\al'\in \D^{(r')}$ for $r'\neq r$, and
\[
W^f \cong \prod_{r \in R} W^{(r)} \quad \text{and} \quad W^{(r)} \cong S_{n_r},
\]
with the actions of the various components on $\h^*$ commuting.

Consider orthogonal projections $\pi_r : \h^* \rightarrow (\h^*)^{(r)}$ where $(\h^*)^{(r)}$ denotes the span in $\h^*$ of $\D^{(r)}$. Because of mutual orthogonality of the sets $\D^{(r)}$ we see that $W^{(r)}$ fixes $\pi_{r'}\beta$ whenever $r'\neq r$. Denote the projection to $\h^f$ by $\pi_f$. Then
\[
\pi_f + \sum_r \pi_r = \id,
\]
is an orthogonal decomposition. This is because by definition $\al \in \D^f$ means $\al$ is orthogonal to $\h^f$, and it's easy enough to see that $\dim(\h^f)=m$ and so by counting dimensions the spans of $\D^f$ and $\h^f$ make up all of $\h$.

At the risk of abusing notation, we write $\pi_s$ for the projection to the remainder part, i.e., to the component $\D^{(r)} \cong \D(\sll_s)$, and $\pi_u$ for the sum of the projections to the other components, i.e., to the components $\D^{(r)} \cong \D(\sll_u)$. So in particular $\pi_s + \pi_u + \pi_f = \id$.

As a consequence of the above remarks, we obtain a factorisation of the Weyl group sum
\begin{align}\label{eq:Swf.factored}
\sum_{w \in W^f} \varepsilon(w) e^{-2\pi i \frac{n}{u} (\beta, w(\beta'))} = e^{-2\pi i \frac{n}{u} (\pi_f\beta, \pi_f\beta')} \prod_{r \in R} \sum_{w \in W^{(r)}} \varepsilon(w) e^{-2\pi i \frac{n}{u} (\pi_r\beta, w(\pi_r\beta'))}.
\end{align}

The following lemma will allow us to effectively control the contribution of $\D^{(u)}$ to the product \eqref{eq:Swf.factored}, allowing us to relate the $S$-matrix entries $S_{\la, \la'}$ to the contribution of $\D^{(s)}$, which will ultimately recover the $S$-matrix of $W(\sll_s[s], s/u)$.
\begin{lemma}\label{lem:u.proj.rho}
Let $r \in R$ and denote by $\rho^{(r)} \in (\h^*)^{(r)}$ the Weyl vector of $\D^{(r)}_+ = \D^{(r)} \cap \D_+$. Let $\la \in \laset$ and $\beta$ associated with $\la$ as above. If $n_r = u$ then $\pi_r(\beta)$ lies in the $W^{(r)}$-orbit of $\rho^{(r)}$.
\end{lemma}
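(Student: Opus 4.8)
The plan is to work in the standard realisation of $\sll_n$, compute the multiset of entries of $\pi_r(\beta)$ in coordinates, and show it coincides with the multiset of entries of $\rho^{(r)}$; since $W^{(r)} \cong S_u$ acts by permuting coordinates, equality of entry-multisets is exactly the assertion that the two vectors lie in a common orbit. So write $\beta = \sum_i \beta_i \epsilon_i$ with $\sum_i \beta_i = 0$, so that $\left<\beta, \al_{ij}^\vee\right> = \beta_i - \beta_j$; since $\beta \in P$ these pairwise differences are integers. First I would invoke Lemma \ref{lem:replete.integrability.type.A}: because $\la$ is replete and $\g_0$-integrable, $\beta_i = \beta_j$ precisely when boxes $i$ and $j$ share a column, and $\beta_i \neq \beta_j$ otherwise. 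Hence $\beta$ assumes exactly $u$ distinct values $b_1, \ldots, b_u$, one per column of the pyramid.

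Next I would bound the spread of these values. Using the $\wtil{W}_+$-freedom exactly as in the proof of Lemma \ref{lem:replete.integrability.type.A}, choose the representative $\eta$ so that the affine label on $\al_0^\vee$ is nonzero; then $\left<\eta, \theta^\vee\right> \leq u-1$, and since $\beta$ is (minus) a Weyl conjugate of $\eta$ one obtains $|\left<\beta, \al^\vee\right>| \leq u-1$ for every root $\al$. Taking $i,j$ in a common row of length $u$ gives $|b_c - b_{c'}| = |\beta_i - \beta_j| \leq u-1$ for all columns $c, c'$. Thus the $b_c$ are $u$ distinct reals whose pairwise differences are integers and whose total spread is at most $u-1$. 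Sorting them as $b_{(1)} < \cdots < b_{(u)}$, the values $b_{(c)} - b_{(1)}$ are $u$ distinct non-negative integers bounded by $u-1$, hence exactly $\{0, 1, \ldots, u-1\}$; so $b_{(c)} = b_{(1)} + (c-1)$ and $\{b_1, \ldots, b_u\}$ is an arithmetic progression of common difference $1$.

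Finally I would assemble the conclusion. A row $r$ with $n_r = u$ meets every column exactly once, so $\{\beta_i \mid i \in r\} = \{b_1, \ldots, b_u\}$. The orthogonal projection $\pi_r$ onto $(\h^*)^{(r)}$ merely subtracts the mean $\tfrac1u\sum_c b_c$, producing the centred multiset $\{-\tfrac{u-1}{2}, \ldots, \tfrac{u-1}{2}\}$ (the additive offset $b_{(1)}$ cancels). This is precisely the multiset of entries of the type $A_{u-1}$ Weyl vector $\rho^{(r)}$. Since $W^{(r)} \cong S_u$ permutes coordinates, two vectors of $(\h^*)^{(r)}$ with equal entry-multisets lie in one orbit, whence $\pi_r(\beta) \in W^{(r)} \cdot \rho^{(r)}$. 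The only delicate point is the spread bound $|b_c - b_{c'}| \leq u-1$, which rests entirely on the judicious choice of representative $\eta$ furnished by the $\wtil{W}_+$-action; the remaining steps are bookkeeping with the pyramid combinatorics and the coordinate description of $\rho^{(r)}$.
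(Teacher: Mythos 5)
Your proof is correct and takes essentially the same route as the paper's: both arguments rest on Lemma \ref{lem:replete.integrability.type.A} together with the bound $|\left<\beta, \al^\vee\right>| \leq u-1$ obtained by choosing $\eta$ with nonzero label at $\al_0^\vee$, and both conclude by observing that a regular integral weight of $A_{u-1}$ with pairings so bounded must be a Weyl conjugate of $\rho^{(r)}$. Your coordinate computation ($u$ distinct values with integer differences and spread at most $u-1$ form consecutive integers, which centre under $\pi_r$ to the entries of $\rho^{(r)}$) is simply the explicit $S_u$-rendition of the paper's abstract step that the dominant conjugate $\mu$ would otherwise satisfy $(\theta^{(r)}, \mu) \geq u$.
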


\begin{proof}
By definition of orthogonal projection, $(\pi_r\beta, \al) = (\beta, \al)$ for all $\al \in \D^{(r)}$. Certainly $(\beta, \al) \in \Z$ for all $\al \in \D$, so $\pi_r\beta \in P^{(r)}$ (the weight lattice of $\D^{(r)}$).

Since $\D^f$ is disjoint from $\D_0$ and $\beta$ is non-integrable with respect to roots in $\D_{>0}$, we have $(\beta, \al) \neq 0$ for all $\al \in \D^{(r)}$ and so $\pi_r\beta$ is a regular integral weight. Consider the unique dominant element $\mu$ of $W^{(r)}(\pi_r\beta)$. We claim $\mu = \rho^{(r)}$, which we will prove by assuming the contrary and deducing a contradiction. Indeed we would then have $(\theta^{(r)}, \mu) \geq u$, where $\theta^{(r)}$ is the highest root of $\D^{(r)}_+ \subset \D_+$. It would follow that $(\theta, \mu) \geq u$, which would be a contradiction to our choice of $\eta \in P_+^{u}$.
\end{proof}
It follows from the lemma that
\begin{align*}
\sum_{w \in W^{(u)}} \varepsilon(w) e^{-2\pi i \frac{n}{u} (\pi_u\beta, w(\pi_u\beta'))} = C(\la, \la') \sum_{w \in W^{(u)}} \varepsilon(w) e^{-2\pi i \frac{n}{u} (\rho^{(u)}, w(\rho^{(u)}))},
\end{align*}
where $C(\la, \la') = \varepsilon(\la) \varepsilon(\la')$ and $\varepsilon(\la) = \varepsilon(w)$ for that $w \in W^{(u)}$ such that $\pi_u\beta = w(\rho^{(u)})$, etc.


We consider now the contribution of the $W^{(s)}$-sum to $S_{\la, \la'}$. Indeed
\begin{align}
\sum_{w \in W^{(s)}} \varepsilon(w) e^{-2\pi i \frac{n}{u} (\pi_s\beta, w(\pi_s\beta'))} 
&= \sum_{w \in W^{(s)}} \varepsilon(w) e^{-2\pi i m (\pi_s\beta, w(\pi_s\beta'))} e^{-2\pi i \frac{s}{u} (\pi_s\beta, w(\pi_s\beta'))} \nonumber \\
&= e^{-2\pi i m (\pi_s\beta, \pi_s\beta')} \sum_{w \in W^{(s)}} \varepsilon(w) e^{-2\pi i \frac{s}{u} (\pi_s\beta, w(\pi_s\beta'))}. \label{eq:W.s.reduction}
\end{align}
In the second step we have used that $w(\beta')-\beta' \in \D^{(s)}$ and $(\pi_s\beta, \al) \in \Z$ for $\al \in \D^{(s)}$.


Let us denote
\begin{align}\label{eq:sfform.def}
\langle \beta, \beta' \rangle = m (\pi_s\beta, \pi_s\beta') + \frac{n}{u} (\pi_f\beta, \pi_f\beta')
\end{align}
Then what we have shown so far is that
\[
S_{\la, \la'} = C(\la, \la') e^{-2\pi i \langle \beta, \beta' \rangle} \sum_{w \in W^{(s)}} \varepsilon(w) e^{-2\pi i \frac{s}{u} (\pi_s\beta, w(\pi_s\beta'))}.
\]
We remark that the map $\beta \mapsto \pi_s \beta$ corresponds to the bijection of Theorem \ref{thm:bijections}, and the sums on the right hand side of \eqref{eq:W.s.reduction} are nothing other than the components of the $S$-matrix of $W(\sll_s[s], s/u)$. So (in order to apply Proposition \ref{lem:S.untwistable} and deduce Theorem \ref{thm:S-mat.equal}) it remains to prove that the factor $e^{-2\pi i \langle \beta, \beta' \rangle}$ takes the form $C_0 \varepsilon(\beta) \varepsilon(\beta')$ for some constant $C_0$ and function $\varepsilon : \beset \rightarrow \{+1, -1\}$. This we shall do in the following proposition.
\begin{prop}\label{prop:s.f.relation} Let $\beta, \beta' \in \beset$. Then
\begin{enumerate}
\item $\langle \beta, \beta' \rangle = \frac{1}{2}\kappa_{\g_0}(\pi_u\beta, \pi_u\beta')$,

\item There exists a constant $C_0$ and a function $\varepsilon : \beset \rightarrow \{+1, -1\}$ such that
\[
e^{-2\pi i \langle \beta, \beta' \rangle} = C_0 \varepsilon(\beta) \varepsilon(\beta').,
\]
\end{enumerate}
\end{prop}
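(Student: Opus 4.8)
The plan is to work in explicit coordinates attached to the pyramid and to reduce both claims to elementary computations. I would realise $\sll_n$ in the standard way, with orthonormal vectors $\epsilon_i$ (for the form with $(\epsilon_i,\epsilon_j)=\delta_{ij}$) indexed by the boxes $i=1,\ldots,n$, roots $\epsilon_i-\epsilon_j$, and the trace-zero relation $\sum_i\epsilon_i=0$. By Lemma \ref{lem:replete.integrability.type.A} we have $(\beta,\epsilon_i-\epsilon_j)=0$ whenever $i$ and $j$ lie in a common column, so $\beta$ is \emph{column-constant}: setting $c_j=(\beta,\epsilon_i)$ for any box $i$ in column $j$ gives a well-defined vector $(c_1,\ldots,c_u)$, and likewise $(c'_1,\ldots,c'_u)$ for $\beta'$. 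Recording that the first $s$ columns have height $m+1$ and the remaining $u-s$ columns have height $m$, the relation $\sum_i(\beta,\epsilon_i)=0$ becomes $mC_u+C_s=0$, where $C_u=\sum_{j=1}^u c_j$ and $C_s=\sum_{j=1}^s c_j$. First I would write out the three projections in these coordinates: $\pi_f\beta$ is the row-average vector (value $C_u/u$ on each box of a $u$-row and $C_s/s$ on the $s$-row), $\pi_r\beta$ has coordinate $d_j:=c_j-C_u/u$ in the column-$j$ box, for every $u$-row $r$ (independently of $r$), and $\pi_s\beta$ has coordinate $c_j-C_s/s$ in the column-$j$ box of the $s$-row.

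For part (1) I would compute both sides directly. Using orthogonality of the row-sum vectors $v_r=\sum_{i\in r}\epsilon_i$ one finds $(\pi_f\beta,\pi_f\beta')=mC_uC'_u/u+C_sC'_s/s$ and $(\pi_s\beta,\pi_s\beta')=\sum_{j=1}^s c_jc'_j-C_sC'_s/s$. On the other side, decomposing $\kappa_{\g_0}$ over the column factors $\sll_{k_j}$ and using $\kappa_{\sll_k}\vert_\h=2k(\cdot,\cdot)$, the short columns contribute nothing (there $\pi_u\beta$ is constant, so its trace-zero-in-column projection vanishes) while each tall column contributes $2m\,d_jd'_j$; hence $\tfrac12\kappa_{\g_0}(\pi_u\beta,\pi_u\beta')=m\sum_{j=1}^s d_jd'_j$. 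Substituting $C_s=-mC_u$ (and likewise for $\beta'$) together with $n=mu+s$, the asserted equality $\langle\beta,\beta'\rangle=\tfrac12\kappa_{\g_0}(\pi_u\beta,\pi_u\beta')$ collapses to the scalar identity $-\tfrac{m^2}{s}+\tfrac{n}{u^2}+\tfrac{nm}{us}=\tfrac{2m}{u}+\tfrac{s}{u^2}$, which is immediate once $\tfrac{n}{u^2}$ and $\tfrac{nm}{us}$ are expanded. This settles (1).

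For part (2) I would feed Lemma \ref{lem:u.proj.rho} into the formula $\langle\beta,\beta'\rangle=m\sum_{j=1}^s d_jd'_j$ just obtained. That lemma says the common coordinate vector $(d_1,\ldots,d_u)$ of $\pi_u\beta$ is a $W^{(u)}\cong S_u$-permutation of $\rho^{(u)}$, whose components are $\rho_i=\tfrac{u+1}{2}-i$; since $\rho^{(u)}$ is regular this determines a unique permutation $\sigma_\beta\in S_u$ with $d_j=\tfrac{u+1}{2}-\sigma_\beta(j)$, depending only on $\beta$. Expanding the product gives
\[
\langle\beta,\beta'\rangle = ms\Big(\tfrac{u+1}{2}\Big)^2 - \tfrac{m(u+1)}{2}\sum_{j=1}^s\big(\sigma_\beta(j)+\sigma_{\beta'}(j)\big) + m\sum_{j=1}^s\sigma_\beta(j)\sigma_{\beta'}(j).
\]
The decisive observation is that the final, genuinely bilinear, cross term is an \emph{integer}, hence vanishes modulo $1$; the first term is constant; and the middle term separates into a $\beta$-part and a $\beta'$-part. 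Thus, modulo $1$, $\langle\beta,\beta'\rangle$ is a constant plus $\tfrac12 g(\beta)+\tfrac12 g(\beta')$ with $g(\beta)=m(u+1)\sum_{j=1}^s\sigma_\beta(j)\in\Z$. Exponentiating yields exactly $e^{-2\pi i\langle\beta,\beta'\rangle}=C_0\,\varepsilon(\beta)\varepsilon(\beta')$ with $C_0=e^{-2\pi i\,ms(u+1)^2/4}$ and $\varepsilon(\beta)=(-1)^{m(u+1)\sum_{j=1}^s\sigma_\beta(j)}\in\{+1,-1\}$, proving (2).

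The routine bookkeeping of the three projections and the Killing-form normalisation in part (1) is the most error-prone part, but the genuine crux is the integrality of the cross term $m\sum_{j=1}^s\sigma_\beta(j)\sigma_{\beta'}(j)$ in part (2): this is precisely what forces the a priori non-factorising bilinear form to become a product of signs after exponentiation, and it relies on Lemma \ref{lem:u.proj.rho} guaranteeing that the $u$-row data are integer (Weyl-permuted copies of $\rho^{(u)}$) rather than arbitrary.
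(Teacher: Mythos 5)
Your proof is correct, and for part (1) it takes a genuinely different route from the paper's. You establish $\langle \beta, \beta' \rangle = \tfrac{1}{2}\kappa_{\g_0}(\pi_u\beta, \pi_u\beta')$ by direct computation in box coordinates: column-constancy of $\beta$ (correctly extracted from Lemma \ref{lem:replete.integrability.type.A}), explicit formulas for $\pi_f$, $\pi_s$ and $\pi_u$, the trace relation $mC_u + C_s = 0$, and the count of $2m\,d_jd'_j$ per tall column (only pairs mixing a $u$-row box with the $s$-row box contribute, and short columns give zero) --- all of which check out, and the whole identity does collapse to $-\tfrac{m^2}{s}+\tfrac{n}{u^2}+\tfrac{nm}{us}=\tfrac{2m}{u}+\tfrac{s}{u^2}$, valid since $n = mu+s$. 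The paper instead derives (1) from two auxiliary lemmas: Lemma \ref{lem:killing.relation}, which handles the $\tfrac{n}{u}(\pi_f\beta,\pi_f\beta')$ term via a vertex-algebraic input --- the Heisenberg level $\phi$ of $W^k(\g,f)$ must vanish because $W_k(\g,f)_1 = 0$, itself read off from the character formula --- and Lemma \ref{lem:swapper}, which supplies the relation $m(\cdot,\cdot) = \tfrac12\kappa_{\g_0}(\cdot,\cdot)$ on $(\h^*)^{(s)}$ and the cross-term cancellations, using $\kappa_{\g_0}(\beta,\cdot)=0$ to trade $\pi_f\beta$ for $-(\pi_s+\pi_u)\beta$. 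Your computation eliminates the representation-theoretic input entirely and is self-contained linear algebra; what the paper's route buys is a conceptual explanation of the numerical coincidence $\tfrac{n}{u}(\cdot,\cdot)=\tfrac12\kappa_{\g_0}(\cdot,\cdot)$ on $\h^f$, which in your version appears only as an unexplained scalar identity. For part (2) the two arguments are the same mechanism in different clothing: the paper writes $\pi_u\beta = \rho^{(u)} - \gamma$ with $\gamma \in Q$ and invokes $\tfrac12\kappa_{\g_0}(\gamma,\gamma') \in \Z$, while you write $d_j = \tfrac{u+1}{2} - \sigma_\beta(j)$ with $\sigma_\beta \in S_u$ unique by regularity of $\rho^{(u)}$ (both resting on Lemma \ref{lem:u.proj.rho}) and observe integrality of the cross term $m\sum_{j \leq s}\sigma_\beta(j)\sigma_{\beta'}(j)$; your $\varepsilon(\beta) = (-1)^{m(u+1)\sum_{j\leq s}\sigma_\beta(j)}$ is exactly the coordinate form of the paper's $\varepsilon(\beta) = e^{-\pi i \kappa_{\g_0}(\rho^{(u)},\gamma)}$, with your $C_0$ matching theirs (note neither need be $\pm 1$, which the statement permits). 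The degenerate case $m=0$, where both sides of (1) vanish and $\varepsilon \equiv 1$, is harmless in your setup since every exponent carries a factor of $m$.
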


\begin{proof}
First we shall deduce part (2) from part (1). First we observe that $\frac{1}{2}\kappa_{\g_0}(\gamma, \gamma') \in \Z$ whenever $\gamma, \gamma' \in Q$ since the form is a sum of evaluations of roots (namely $\gamma$ and $\gamma'$) against coroots (those of $\g_0$), and coroots $\al^\vee$ and $-\al^\vee$ contribute equally to the sum.

By Proposition \ref{lem:u.proj.rho}, for each $\beta$ there exists $\gamma \in Q$ such that $\pi_u\beta = \rho^{(u)} - \gamma$, and similarly $\pi_u\beta' = \rho^{(u)} - \gamma'$. Furthermore $\rho^{(u)} \in \frac{1}{2}\Z\D^{(u)} \subset \frac{1}{2}Q$. By the first half, then,
\begin{align*}
\langle \beta, \beta' \rangle = \frac{1}{2}\left( \kappa_{\g_0}(\rho^{(u)}, \rho^{(u)}) - \kappa_{\g_0}(\rho^{(u)}, \gamma + \gamma') + \kappa_{\g_0}(\gamma, \gamma') \right).
\end{align*}
But now $\frac{1}{2}\kappa_{\g_0}(\gamma, \gamma')$ is an integer, so
\[
e^{-2\pi i \langle \beta, \beta' \rangle} = C_0 \varepsilon(\beta) \varepsilon(\beta')
\]
where $C_0 = e^{-\pi i \kappa_{\g_0}(\rho^{(u)}, \rho^{(u)})}$ and $\varepsilon(\beta) = e^{-\pi i \kappa_{\g_0}(\rho^{(u)}, \gamma)} \in \{+1, -1\}$.

Now we prove part (1). By Lemma {\ref{lem:killing.relation}} and Lemma {\ref{lem:swapper} part (1),} below, we have
\begin{align}\label{eq:s.f.relation.1}
\langle \beta, \beta' \rangle = \frac{1}{2}\kappa_{\g_0}(\pi_s\beta, \pi_s\beta') + \frac{1}{2}\kappa_{\g_0}(\pi_f\beta, \pi_f\beta').
\end{align}
Since $\beta$ annihilates the roots of $\g_0$ we have $\beta|_{\g_0} = 0$ and so $\kappa_{\g_0}(\beta, \beta') = 0$. It follows that we can replace $\pi_f \beta$ by $-(\pi_s+\pi_u) \beta$ in \eqref{eq:s.f.relation.1}, obtaining
\begin{align}\label{eq:s.f.relation.3}
\langle \beta, \beta' \rangle = \frac{1}{2}\kappa_{\g_0}(\pi_s\beta, \pi_s\beta') + \frac{1}{2}\kappa_{\g_0}((\pi_u+\pi_s)\beta, (\pi_u+\pi_s)\beta').
\end{align}
Now by Lemma {\ref{lem:swapper} part (2)}, this in turn is reduced to
\begin{align}\label{eq:s.f.relation.4}
\langle \beta, \beta' \rangle = \frac{1}{2}\kappa_{\g_0}(\pi_u\beta, \pi_u\beta').
\end{align}
\end{proof}


Finally we prove the lemmas used in the proof of Proposition \ref{prop:s.f.relation}.
\begin{lemma}\label{lem:killing.relation}
For all $x, y \in \h^f$ the following equality holds
\[
\frac{n}{u} (x, y) = \frac{1}{2} \kappa_{\g_0}(x, y).
\]
\end{lemma}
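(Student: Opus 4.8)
The plan is to prove the equivalent rescaled identity $\kappa_{\g}(x,y) = u\,\kappa_{\g_0}(x,y)$ for all $x, y \in \h^f$, and then finish using the standard normalisation $\kappa_{\g} = 2h^\vee(\cdot,\cdot) = 2n\,(\cdot,\cdot)$ for $\g = \sll_n$; dividing by $2u$ gives exactly the stated equality. Since $x, y \in \h$ commute with all of $\h$, both Killing forms collapse to sums over roots, the Cartan contributing nothing:
\[
\kappa_{\g}(x,y) = \sum_{\al \in \D}\al(x)\al(y), \qquad \kappa_{\g_0}(x,y) = \sum_{\al \in \D_0}\al(x)\al(y).
\]
So the whole problem reduces to comparing these two root sums on $\h^f$.

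The key structural input is that any $x \in \h^f$ is constant along the rows of the pyramid: writing $x = \diag(x_1, \dots, x_n)$ with entries indexed by boxes, the condition $[x, f] = 0$ forces $x_i = x_j$ whenever boxes $i, j$ lie in a common row. Consequently, for a root $\al_{ij}$ the quantity $\al_{ij}(x)\al_{ij}(y) = (x_i - x_j)(y_i - y_j)$ depends only on the ordered pair of rows containing the two boxes, and in particular vanishes when $i, j$ share a row, i.e. when $\al_{ij} \in \D^f$. I would therefore group both root sums by the ordered pair of distinct rows $(r, r')$ to which the two boxes belong, writing $\delta_{r,r'}$ for the common value of $\al(x)\al(y)$ on that block. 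The number of roots $\al_{ij} \in \D$ with $i$ in row $r$ and $j$ in row $r'$ is $n_r n_{r'}$, whereas the number of such roots lying in $\D_0$ equals the number of columns meeting both rows; call this $c_{r,r'}$. This gives $\kappa_{\g}(x,y) = \sum_{r \neq r'} n_r n_{r'}\,\delta_{r,r'}$ and $\kappa_{\g_0}(x,y) = \sum_{r \neq r'} c_{r,r'}\,\delta_{r,r'}$.

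It then remains to establish the combinatorial identity $n_r n_{r'} = u\, c_{r,r'}$ for every pair of distinct rows, and this is the one step carrying real content. Here left-adjustedness is decisive: each row occupies an initial segment of the columns, so two rows of lengths $n_r, n_{r'}$ overlap in exactly $\min(n_r, n_{r'})$ columns, giving $c_{r,r'} = \min(n_r, n_{r'})$. Since the partition of $\mbo_u$ is $[u^m, s]$ there is only one short row (of length $s$), so every pair of distinct rows contains at least one row of length $u$; hence $\max(n_r, n_{r'}) = u$ and
\[
n_r n_{r'} = \max(n_r, n_{r'})\,\min(n_r, n_{r'}) = u\, c_{r,r'}.
\]
Summing over row pairs yields $\kappa_{\g} = u\,\kappa_{\g_0}$ on $\h^f$, completing the proof.

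I expect the only genuine obstacle to be verifying $n_r n_{r'} = u\,c_{r,r'}$; everything else is bookkeeping. The point to get right is that left-adjustedness forces the column-overlap of two rows to equal the shorter row length, and that a single short row guarantees the longer member of any pair attains the full length $u$ — it is precisely these two facts that make the factor $u$ come out cleanly.
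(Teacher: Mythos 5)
Your proof is correct, and it is genuinely different from the one in the paper. You verify the identity by brute force: reduce both Killing forms on $\h^f$ to root sums, use constancy of elements of $\h^f$ along pyramid rows to group terms by ordered pairs of distinct rows, and then check the termwise count $n_r n_{r'} = u \min(n_r, n_{r'})$, which holds because the left-adjusted pyramid makes the column-overlap of two rows equal to the shorter length and the partition $[u^m, s]$ has only one row shorter than $u$. All of these steps check out (including the normalisations $\kappa_{\g} = 2h^\vee(\cdot,\cdot)$ and $h^\vee = n$ for $\sll_n$, and the fact that roots in $\D^f$ contribute zero). The paper instead reads the identity off from vertex algebra structure: by Kac--Roan--Wakimoto, $W^k(\g,f)$ contains a Heisenberg subalgebra $H^\phi(\h^f)$ with $\phi(x,y) = k(x,y) + \tfrac{1}{2}\left(\kappa_{\g}(x,y) - \kappa_{\g_0}(x,y)\right)$, and the character formula of Proposition \ref{prop:H.la.char.thm} shows the simple quotient has trivial weight-one subspace, which forces $\phi \equiv 0$; at the boundary level $k = -h^\vee + n/u$ this is precisely the stated equality. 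The trade-off: your argument is elementary and self-contained, needing nothing beyond pyramid combinatorics, but it is tied to type $A$ and to the left-adjusted pyramid; the paper's argument is conceptual, explaining the identity as the degeneration to level zero of the Heisenberg subalgebra at the boundary level, and it would apply verbatim to any good grading (and potentially outside type $A$) whenever $W_k(\g,f)_1 = 0$ is known --- at the cost of invoking the product formula for characters.
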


\begin{proof}
The universal $W$-algebra $W^k(\g, f)$ has a set of strong generators indexed by a graded basis of $\g^f$, with conformal weight $1+j$ assigned to those generators in $\g_{-j}^f$. In particular $W^k(\g, f)$ contains a Heisenberg subalgebra $H^\phi(\h^f)$. The bilinear form $\phi : \h^f \times \h^f \rightarrow \C$ was determined explicitly in \cite[Theorem 2.4(c)]{KRW03} as
\begin{align}\label{eq:gnat.form}
\phi(x, y) = k (x, y) + \frac{1}{2} \left( \kappa_{\g}(x, y) - \kappa_{\g_0}(x, y) \right),
\end{align}
where $(-, -)$ denotes the restriction of the standard normalised bilinear form on $\g$, and the $\kappa$ are Killing forms.

From the character formula \eqref{eq:H.la.char.thm} (see the proof of Corollary \ref{cor:self.dual}) we deduce that $W_k(\g, f)_1 = 0$. Therefore $W^k(\g, f)_1$ lies inside the maximal ideal, which can only occur if $\phi = 0$. Accordingly, setting the right hand side of \eqref{eq:gnat.form} to zero, and using $\kappa_\g(-,-) = 2h^\vee (-,-)$, we obtain the desired equation.
\end{proof}

\begin{lemma}\label{lem:swapper}
Identifying $\h^*$ with $\h$ as usual, we have:
\begin{enumerate}
\item For all $\la, \la' \in (\h^*)^{(s)}$ we have
\[
m (\la, \la') = \frac{1}{2}\kappa_{\g_0}(\la, \la').
\]

\item For all $\beta, \beta'$ orthogonal to $\D_0$ we have
\[
2\kappa_{\g_0}(\pi_s\beta, \pi_s\beta') + \kappa_{\g_0}(\pi_u\beta, \pi_s\beta') + \kappa_{\g_0}(\pi_s\beta, \pi_u\beta') = 0.
\]
\end{enumerate}
\end{lemma}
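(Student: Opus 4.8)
The plan is to compute both identities directly in the coordinates furnished by the pyramid. Identify $\h$ with the traceless diagonal matrices in $\sll_n$, write $x = \diag(x_1, \dots, x_n)$ indexed by the boxes, and let $(x, y) = \sum_i x_i y_i$ be the trace form. The blocks of $\g_0$ are the columns of the pyramid: there are $s$ ``tall'' columns $C_0, \dots, C_{s-1}$ of height $m+1$ (occupied by the short row together with all $m$ long rows) and $u-s$ ``short'' columns of height $m$. Since the Killing form of a reductive Lie algebra vanishes on its centre and splits as a sum over simple factors, on $\h$ it takes the explicit shape
\begin{equation*}
\kappa_{\g_0}(x, y) = \sum_{\text{columns } c} 2 h_c \sum_{i \in C_c} (x_i - \bar{x}_c)(y_i - \bar{y}_c),
\end{equation*}
where $h_c$ is the height of column $c$ and $\bar{x}_c = h_c^{-1}\sum_{i \in C_c} x_i$ is the column average. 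This single formula drives everything.

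For part (1) I would use that a weight $\la \in (\h^*)^{(s)}$ is supported on the short row and traceless there, so its only nonzero entries sit one apiece in the tall columns $C_0, \dots, C_{s-1}$. In the tall column $C_c$ the short-row box carries $\la_{i_c}$ while the $m$ long-row boxes carry $0$; evaluating the bracket above gives a column contribution of $2(m+1)\cdot \tfrac{m}{m+1}\la_{i_c}\la'_{i_c} = 2m\,\la_{i_c}\la'_{i_c}$, and the short columns contribute nothing. Summing over the tall columns yields $\kappa_{\g_0}(\la, \la') = 2m\,(\la, \la')$, which is exactly part (1).

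For part (2) I would first exploit the conceptual content of the hypothesis: ``$\beta$ orthogonal to $\D_0$'' says precisely that $\beta \in Z(\g_0)$, whence $\ad_\beta = 0$ on $\g_0$ and $\kappa_{\g_0}(\beta, -) \equiv 0$. Expanding $\kappa_{\g_0}(\beta, \pi_s\beta') = 0$ and $\kappa_{\g_0}(\pi_s\beta, \beta') = 0$ through $\beta = \pi_s\beta + \pi_u\beta + \pi_f\beta$ and adding, the target combination $2\kappa_{\g_0}(\pi_s\beta, \pi_s\beta') + \kappa_{\g_0}(\pi_u\beta, \pi_s\beta') + \kappa_{\g_0}(\pi_s\beta, \pi_u\beta')$ is seen to equal $-\kappa_{\g_0}(\pi_f\beta, \pi_s\beta') - \kappa_{\g_0}(\pi_s\beta, \pi_f\beta')$. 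Thus the whole statement reduces to the single orthogonality $\kappa_{\g_0}(\pi_f\beta, \pi_s\beta') = 0$, i.e.\ $\h^f \perp_{\kappa_{\g_0}} (\h^*)^{(s)}$.

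To close I would prove this orthogonality from the column formula. Since $\pi_s\beta'$ is supported on the short row, only tall columns contribute; in each such column both $\pi_f\beta$ (constant $\bar\beta_s$ on the short-row box, constant $\bar\beta_u$ on the $m$ long-row boxes) and $\pi_s\beta'$ have traceless-within-column parts proportional to the \emph{same} vector, the one distinguishing the short-row box from the long-row boxes. Their column contribution is therefore a fixed multiple of $(\bar\beta_s - \bar\beta_u)\,((\beta')^{(c)} - \bar\beta'_s)$, and summing over $c = 0, \dots, s-1$ produces the factor $\sum_{c}((\beta')^{(c)} - \bar\beta'_s) = 0$ by tracelessness of $\pi_s\beta'$ on the short row. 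Everything here is elementary linear algebra; the only thing demanding care — and hence the main obstacle — is keeping straight how each of $\pi_s$, $\pi_u$, $\pi_f$ redistributes the column-constant vector $\beta$ across the rows, so that the cross terms line up as claimed.
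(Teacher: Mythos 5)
Your proof is correct and follows essentially the same route as the paper's: both establish part (1) by explicitly computing $\kappa_{\g_0}$ on diagonal matrices in pyramid coordinates (your general column formula specialises to the paper's block computation), and both deduce part (2) from the vanishing of $\kappa_{\g_0}(\beta, \cdot)$ due to centrality of $\beta$ in $\g_0$, combined with the $\kappa_{\g_0}$-orthogonality of $(\h^*)^{(s)}$ and $\h^f$. One small remark: your parenthetical claim that $\pi_f\beta$ takes a single constant value on all long-row boxes is not true for arbitrary elements of $\h^f$, but it is valid here precisely because $\beta$, being orthogonal to $\D_0$, is constant along columns, so every long row has the same average; the paper sidesteps this by proving the stronger orthogonality $\kappa_{\g_0}(x, y) = 0$ for all $x \in \h^{(s)}$ and $y \in \h^f$.
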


\begin{proof}
First we prove part (1). Under the conventions on nilpotent elements in $\sll_n$ and pyramids which we have adopted from \cite{CMcG}, the vector space $\h^{(s)}$ is identified with the space of traceless diagonal $n \times n$ matrices with nonzero entries permitted only in the following $s$ positions: $E_{1, 1}$, $E_{(m+1)+1, (m+1)+1}$, $E_{2(m+1)+1, 2(m+1)+1}$, and so on up to $E_{(s-1)(m+1)+1, (s-1)(m+1)+1}$.

Identifying an element $h = \sum h_i E_{i(m+1)+1, i(m+1)+1} \in \h^{(s)}$ with a vector $(h_0, \ldots, h_{s-1})$, the normalised bilinear form accordingly becomes $(h, h') = \sum_i h_i h_i'$.

The subalgebra $\g_0$ consists of certain block diagonal matrices, specifically $s$ blocks of size $(m+1) \times (m+1)$, followed by $u-s$ blocks of size $m \times m$. Note in particular that the diagonal matrices listed above are precisely the $(1, 1)$-entries of the first $s$ blocks. It follows that
\[
\tr_{\g_0} \ad(h) \ad(h') = 2 m \sum_{i} h_i h'_i.
\]
from which we immediately derive the stated relation.

Now we turn to part (2).  Since $\beta'$, by hypothesis, annihilates the root vectors in $\g_0$, we have
\[
\kappa_{\g_0}(\pi_s\beta, \beta') = 0.
\]
It follows that
\[
\kappa_{\g_0}(\pi_s\beta, (\pi_s+\pi_u)\beta') = \kappa_{\g_0}(\pi_s\beta, -\pi_f\beta').
\]
Again following our conventions on pyramids, each vector space $\h^{(r)}$ is identified with the space of traceless matrices with nonzero entries only in a certain set $\{E_{i, i} \mid i \in \Sigma_r\}$ (for example above we saw $\Sigma_s = {i(m+1)+1 \mid i=0,\ldots s-1}$). Furthermore $\h^f$ is identified with the space of traceless matrices with entries constant on each of the sets $\Sigma_r$. By a similar calculation as in the proof of part (1), we conclude that for $x \in \h^{(s)}$ and $y \in \h^f$ we have $\kappa_{\g_0}(x, y) = 0$. Therefore
\[
\kappa_{\g_0}(\pi_s\beta, \pi_s\beta') = -\kappa_{\g_0}(\pi_s\beta, \pi_u\beta').
\]
By the same argument, we also have
\[
\kappa_{\g_0}(\pi_s\beta, \pi_s\beta') = -\kappa_{\g_0}(\pi_u\beta, \pi_s\beta').
\]
Summing the two relations gives the claimed result.
\end{proof}

\subsubsection{Equality of $T$-matrices}\label{subsec:T-matrices}

In this section we will show that the conformal dimensions of the irreducible $W(\sll_n[u^m, s], n/u)$-modules (corresponding under the bijections of Theorem \ref{thm:bijections}) are independent of $m$. In particular the $T$-matrices of these vertex algebras coincide.

We recall that the conformal dimension $h_\la$ of $H_{f, -}^0(L_k(\la))$ is given by the formula {\cite[Remark 2.2]{KRW03}}
\begin{align*}
h_\la &= \frac{u}{2p}\left(|\la+\rho|^2 - |\rho|^2\right) - \frac{p}{2u}|x_0|^2 + (\rho, x_0)
\end{align*}
which, in case $\la = -(p/u)\eta$, can be rewritten as
\begin{align}\label{eq:h.la.fla}
h_\la = \frac{u}{2p}\left(|\rho-\frac{p}{u}\eta|^2 - |\rho-\frac{p}{u}x_0|^2\right). 
\end{align}

Let us write $\eta \in P_{+, f}^u$ in the form
\[
\eta = \sum_{i=0}^{u-1} \varpi_{mi + a_i},
\]
where $(a_0, a_1, \ldots, a_{u-1})$ is a fixed list of non decreasing integers. For example in $\sll_3$ the weight $(6, 1 \mid 1)$ corresponds to
\[
\mathbf{a} = (a_0, \ldots, a_7) = (1, 1, 1, 1, 1, 1, 2, 3).
\]
The corresponding weight $(1, 1, 1, 1, 1, 1, 0, 1, 0, 1 \mid 0)$ in $\sll_{11}$ corresponds to this same list $\mathbf{a}$. The bijections between $P_{+, f}^u$ preserve these lists. From the description \eqref{eq:x0.description} of the grading element $x^0$, we see that (for all $m$) it corresponds to the list $\mathbf{a} = (1, 2, \ldots, s-1, s, s, s, \ldots, s)$.

Let us fix a list $\mathbf{a}$ and, for $m \in \Z_{+}$ and $n = mu+s$, consider the weight $\eta$ for $\sll_n$ associated with $\mathbf{a}$ as above. We introduce
\[
N_m(\mathbf{a}) = \frac{u}{2n} \left|\rho - \frac{n}{u}\eta\right|^2 - \frac{n^2}{24u}.
\]
\begin{prop}\label{prop:N.const}
The number $N_m(\mathbf{a})$ depends on $\mathbf{a}$ but not on $m$.
\end{prop}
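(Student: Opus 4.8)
The plan is to turn the statement into an explicit polynomial identity in $m$ and to verify that the coefficients of $m^2$ and $m^1$ vanish. First I would expand
\[
\frac{u}{2n}\left|\rho-\frac{n}{u}\eta\right|^{2}=\frac{u}{2n}|\rho|^{2}-(\rho,\eta)+\frac{n}{2u}|\eta|^{2},
\]
so that
\[
N_m(\mathbf{a})=\left(\frac{u}{2n}|\rho|^{2}-\frac{n^{2}}{24u}\right)-(\rho,\eta)+\frac{n}{2u}|\eta|^{2}.
\]
For the first bracket I would use the Freudenthal--de Vries strange formula $|\rho|^{2}=h^\vee\dim(\g)/12=n(n^{2}-1)/12$, which gives $\frac{u}{2n}|\rho|^{2}-\frac{n^{2}}{24u}=\frac{(u^{2}-1)n^{2}-u^{2}}{24u}$.

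Next I would evaluate the remaining two terms by means of the standard type $A$ inner products $(\rho,\varpi_j)=\tfrac12 j(n-j)$ and $(\varpi_i,\varpi_j)=\min(i,j)-ij/n$, which remain valid for $1\le i,j\le n$ under the convention $\varpi_n=0$. Writing $b_i=mi+a_i$, so that $\eta=\sum_{i=0}^{u-1}\varpi_{b_i}$, these yield
\[
(\rho,\eta)=\tfrac12\sum_{i}b_i(n-b_i),\qquad |\eta|^{2}=\sum_{i,j}\min(b_i,b_j)-\tfrac1n\Big(\sum_i b_i\Big)^{2}.
\]
Since $\mathbf{a}$ is nondecreasing and $m\ge 0$, the sequence $(b_i)$ is nondecreasing, so the double sum collapses to $\sum_{i,j}\min(b_i,b_j)=\sum_{i=0}^{u-1}\big(2(u-i)-1\big)b_i$. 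Substituting $b_i=mi+a_i$ and $n=mu+s$, and expressing everything through the power sums $\sum_i i=\tfrac{u(u-1)}2$, $\sum_i i^{2}=\tfrac{(u-1)u(2u-1)}6$ together with the $\mathbf{a}$-dependent sums $A=\sum_i a_i$, $\tau=\sum_i i a_i$ and $\sum_i a_i^{2}$, one sees that $N_m(\mathbf{a})$ is a polynomial in $m$ of degree at most two. It therefore suffices to show that its coefficients of $m^{2}$ and $m^{1}$ vanish.

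The coefficient of $m^{2}$ does not involve $\mathbf{a}$; after factoring out $\tfrac{u(u-1)}{24}$ it reduces to the bracket $(u+1)-6u+2(2u-1)+2(2u-1)-3(u-1)$, which is identically $0$, so the quadratic term disappears automatically. The substantive point, and the step I expect to be the main obstacle, is the vanishing of the coefficient of $m^{1}$. This coefficient is affine-linear in $A$ and $\tau$, so its vanishing for every admissible $\mathbf{a}$ is equivalent to three separate identities in $u$ and $s$: the coefficient of $A$ must vanish, the coefficient of $\tau$ must vanish, and the purely $(u,s)$-dependent term must vanish. Each of these is elementary: collecting the contributions of the five summands above, the coefficient of $A$ comes out as $-\tfrac u2+\tfrac{2u-1}2-\tfrac{u-1}2=0$, the coefficient of $\tau$ as $1-1=0$, and the constant term factors as $\tfrac{s(u-1)}{12}\big[(u+1)-3u+(2u-1)\big]=0$. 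Once both coefficients are shown to vanish, $N_m(\mathbf{a})$ is constant in $m$, as claimed. The only point requiring care is the boundary case $m=0$, where some $b_i$ equal $n$; this is harmless under the convention $\varpi_n=0$, which is exactly what makes the inner-product formulas above continue to hold, so no separate treatment is needed.
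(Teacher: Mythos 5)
Your proposal is correct and follows essentially the same route as the paper: expand $N_m(\mathbf{a})$ via the strange formula and the type $A$ Gram matrix $(\varpi_i,\varpi_j)=\min(i,j)-ij/n$, and evaluate $\sum_{i,j}\min(b_i,b_j)$ using the multiplicities $2(u-i)-1$ for the nondecreasing sequence $b_i=mi+a_i$ (I checked your bracket identities for the $m^2$ and $m^1$ coefficients, including the $A$-, $\tau$- and constant-term pieces, and they are all correct, as is your handling of the $m=0$ boundary via $\varpi_n=0$). The only difference is organizational: the paper pushes the computation through to a closed formula for $N_m(\mathbf{a})$ that is manifestly $m$-free, while you verify vanishing of the quadratic and linear coefficients in $m$, which suffices equally well.
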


\begin{cor}\label{cor:same.confdim}
The bijections between irreducible $W(\sll_n[u^m, s], n/s)$-modules of Theorem \ref{thm:bijections}, preserve conformal dimensions.
\end{cor}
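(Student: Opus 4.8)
The plan is to reduce the corollary to Proposition~\ref{prop:N.const}, which I may assume, by means of the conformal dimension formula \eqref{eq:h.la.fla}. Since we are in the boundary case, we have $p = h^\vee = n$ for $\sll_n$, so \eqref{eq:h.la.fla} reads
\[
h_\la = \frac{u}{2n}\left|\rho - \frac{n}{u}\eta\right|^2 - \frac{u}{2n}\left|\rho - \frac{n}{u}x^0\right|^2 .
\]
By the very definition of $N_m$, the first term equals $N_m(\mathbf{a}) + \frac{n^2}{24u}$, where $\mathbf{a}$ is the list attached to $\eta$.

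The next step is to treat the subtracted $x^0$-term on the same footing. From the description \eqref{eq:x0.description} the grading element $x^0$ is itself of the form $\sum_{i=0}^{u-1}\varpi_{mi + a^0_i}$ for the \emph{fixed} list $\mathbf{a}^0 = (1, 2, \ldots, s-1, s, s, \ldots, s)$, which does not vary with $m$. Hence the second term equals $N_m(\mathbf{a}^0) + \frac{n^2}{24u}$, the additive constants $\frac{n^2}{24u}$ cancel, and I obtain the clean identity
\[
h_\la = N_m(\mathbf{a}) - N_m(\mathbf{a}^0).
\]
Proposition~\ref{prop:N.const} now applies to both summands, so each is independent of $m$; and since the bijections of Theorem~\ref{thm:bijections} are exactly those preserving the list $\mathbf{a}$ (and they fix $\mathbf{a}^0$), corresponding modules for different $m$ share the same conformal dimension, which is the assertion of the corollary.

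The genuine work sits inside Proposition~\ref{prop:N.const}, which I am taking as given; were I to establish it, I would expand $\left|\rho - \frac{n}{u}\eta\right|^2 = |\rho|^2 - \frac{2n}{u}(\rho, \eta) + \frac{n^2}{u^2}|\eta|^2$, insert the type~$A$ identities $|\rho|^2 = n(n^2-1)/12$, $(\rho, \varpi_a) = a(n-a)/2$ and $(\varpi_a, \varpi_b) = \min(a,b) - ab/n$, and then substitute $a = mi + a_i$ together with $n = mu + s$. The hard part will be the bookkeeping showing that every term carrying a positive power of $m$ cancels, leaving an expression in the $a_i$ alone; the non-decreasing structure of $\mathbf{a}$ and the summation over $i = 0, \ldots, u-1$ are what drive this cancellation. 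For the corollary as stated, the one point that still warrants care is verifying that $x^0$ genuinely corresponds to a single $m$-independent list $\mathbf{a}^0$, so that the proposition can be applied to the subtracted term exactly as it is to the leading one.
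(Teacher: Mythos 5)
Your argument is exactly the paper's: the paper likewise applies formula \eqref{eq:h.la.fla} with $p = h^\vee = n$ and Proposition \ref{prop:N.const} to both the $\eta$-term and the $x^0$-term, having already noted (just before that proposition) that $x^0$ corresponds for all $m$ to the fixed list $(1, 2, \ldots, s-1, s, s, \ldots, s)$, which is precisely the point you flag as needing care. Your identity $h_\la = N_m(\mathbf{a}) - N_m(\mathbf{a}^0)$ is a correct and slightly more explicit rendering of the paper's one-line proof.
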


\begin{proof}
The weights $x^0$ correspond under the bijections, so the claim follows immediately from Proposition \ref{prop:N.const} and formula \eqref{eq:h.la.fla}.
\end{proof}

\begin{proof}[Proof of Proposition \ref{prop:N.const}]
First we expand
\begin{align*}
N_m(\mathbf{a}) &= \frac{u}{2n} |\rho|^2 - (\rho, \eta) + \frac{n}{2u}|\eta|^2 - \frac{n^2}{24u} \\
&= \frac{1}{24}\left[(n^2-1)u - \frac{n^2}{u}\right] - (\rho, \eta) + \frac{n}{2u}|\eta|^2,
\end{align*}
where in passing to the second line we used the Freudenthal-de Vries strange formula $|\rho|^2 = h^\vee \dim(\g) / 12$.

We recall that the standard bilinear form for $\sll_n$, relative to the basis $\{\varpi_1, \ldots, \varpi_{n-1}\}$, is given by the Gram matrix $F$ whose components are
\[
F_{ij} = \min\{i, j\} - \frac{ij}{n}.
\]
We use this to simplify $|\eta|^2$ and $(\rho, \eta)$ in turn. It will be convenient to introduce the quantities
\begin{align*}
S = \sum_{j=0}^{u-1} a_j, \quad S_2 = \sum_{j=0}^{u-1} a_j^2 \quad \text{and} \quad M = \sum_{k=1}^{u} k a_{u-k},
\end{align*}

For $|\eta|^2$ we have
\[
|\eta|^2 = \sum_{i, j=0}^{u-1} \left( \min\{mi+a_i, mj+a_j\} - \frac{(mi+a_i)(mj+a_j)}{n} \right).
\]
It is not hard to see that in the summation of $\min\{mi+a_i, mj+a_j\}$, the term $m(u-k)+a_{u-k}$ appears $2k-1$ times, and so the sum of these terms is
\[
\sum_{k=1}^u (2k-1) (m(u-k) + a_{u-k}) = m \frac{2u^3-3u^2+u}{6} + 2M - S.
\]
Combining with the second summand gives
\begin{align*}
|\eta|^2 = m \frac{2u^3-3u^2+u}{6} + 2M - S - \frac{1}{n}\left[m^2\frac{u^2(u-1)^2}{4} + mu(u-1) + S^2\right].
\end{align*}

For $(\rho, \eta)$ we have
\[
(\rho, \eta) = \sum_{i=1}^n \sum_{j=0}^{u-1} \left( \min\{i, mj+a_j\} - \frac{i(mj+a_j)}{n} \right).
\]
We compute the sum of the terms $\min\{i, mj+a_j\}$ via the following observation: for fixed $j$ and $i$ varying from $1$ to $n$, these terms assume the values $1, 2, \ldots, mj+a_j-1$ followed by $n-(mj+a_j)$ copies of $mj+a_j$. The total is thus
\[
\sum_{j=0}^{u-1} \left[ \frac{(mj+a_j)(mj+a_j-1)}{2} + (n - (mj + a_j))(mj+a_j) \right].
\]
Combining this with the other summand gives the result
\begin{align*}
(\rho, \eta) = -\frac{n-1}{2}\left(m \frac{u(u-1)}{2} + S\right) - \frac{1}{2}m^2 \frac{u(u-1)(2u-1)}{6} - m(uS-M) - \frac{1}{2}S_2.
\end{align*}

Substituting in the definition of $N_m(\mathbf{a})$ yields
\begin{align*}
N_m(\mathbf{a}) = \frac{S^2 + s(u+1)S - uS_2 - 2sM}{2 u} + \frac{s^2 \left(u^2-1\right)-u^2}{24 u},
\end{align*}
which is manifestly independent of $m$.
\end{proof}

\begin{prop}\label{prop:c.formula}
The central charge of $W(\sll_n[u^m, s], n/u)$ is given by the formula
\[
c = - \frac{(s-1)(u-s-1)(u+us-s^2)}{u}.
\]
In particular it is independent of $m$.
\end{prop}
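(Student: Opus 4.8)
The plan is to read off the central charge from the explicit formula for $W^k(\g,f)$ in {\cite[Theorem 2.2]{KRW03}} and then recognise that the only norm appearing in it is precisely the quantity already controlled by Proposition \ref{prop:N.const}. For a good \emph{even} grading one has $\g_{1/2} = 0$, and the central charge takes the form
\[
c = \dim(\g_0) - \frac{12}{k+h^\vee}\left|(k+h^\vee)x^0 - \rho\right|^2;
\]
a quick check on the principal $\sll_2$ case (where it returns the Virasoro value $1 - 6(k+1)^2/(k+2)$) fixes the normalisation. Substituting $k+h^\vee = p/u = n/u$ gives
\[
c = \dim(\g_0) - \frac{12u}{n}\left|\rho - \frac{n}{u}x^0\right|^2.
\]

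First I would match this against Proposition \ref{prop:N.const}. Taking $\eta = x^0$, which corresponds under \eqref{eq:x0.description} to the list $\mathbf{a}_{x^0} = (1,2,\ldots,s-1,s,\ldots,s)$, the same for every $m$, one has $\frac{u}{2n}|\rho - \frac{n}{u}x^0|^2 = N_m(\mathbf{a}_{x^0}) + \frac{n^2}{24u}$, so that
\[
c = \left(\dim(\g_0) - \frac{n^2}{u}\right) - 24\,N_m(\mathbf{a}_{x^0}).
\]
Next I would record the dimension count from the left-adjusted pyramid: there are $s$ columns of height $m+1$ and $u-s$ of height $m$, so $\g_0 = (\bigoplus_c \mathfrak{gl}_{h_c}) \cap \sll_n$ has $\dim(\g_0) = s(m+1)^2 + (u-s)m^2 - 1 = um^2 + 2sm + s - 1$. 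The key cancellation is then
\[
\dim(\g_0) - \frac{n^2}{u} = (um^2 + 2sm + s - 1) - \frac{(mu+s)^2}{u} = s - 1 - \frac{s^2}{u},
\]
in which all $m$-dependence disappears. Since Proposition \ref{prop:N.const} already tells us $N_m(\mathbf{a}_{x^0})$ is independent of $m$, this establishes that $c$ is independent of $m$.

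To obtain the closed form it then suffices to evaluate at a single $m$, and $m=0$ is cleanest: here $\g = \sll_s$, the grading is principal so $x^0 = \rho$ and $\dim(\g_0) = s-1$, and the Freudenthal--de Vries strange formula gives $|\rho|^2 = s(s^2-1)/12$. Substituting yields $c = (s-1) - \frac{(u-s)^2(s^2-1)}{u}$, and the routine factorisation $u - (u-s)^2(s+1) = -(u-s-1)(u+us-s^2)$ rewrites this (using $s^2-1 = (s-1)(s+1)$) as the asserted $-\frac{(s-1)(u-s-1)(u+us-s^2)}{u}$. Alternatively, substituting the explicit formula for $N_m(\mathbf{a}_{x^0})$ from the proof of Proposition \ref{prop:N.const}, with the values of $S$, $S_2$, $M$ evaluated on $\mathbf{a}_{x^0}$, lands on the same expression directly. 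I expect the only delicate point to be pinning down the exact shape of the KRW formula --- in particular the vanishing of the $\dim(\g_{1/2})$ contribution for our even gradings and the correct normalisation of the norm term --- after which everything is bookkeeping, with the $m$-independence supplied for free by Proposition \ref{prop:N.const}.
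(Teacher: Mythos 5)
Your proof is correct, and while it starts from the same place as the paper, it executes the key algebra by a genuinely different route. Both arguments rest on the specialised Kac--Roan--Wakimoto formula --- your display agrees with the paper's \eqref{eq:c.formula.general} once $k+h^\vee = n/u$ is substituted --- and on the same pyramid count of $\dim(\g_0)$ (your $s(m+1)^2+(u-s)m^2-1$ and the paper's $sm(m+1)+(u-s)m(m-1)+n-1$ both equal $um^2+2sm+s-1$). But where the paper then evaluates $\left|\rho-\tfrac{n}{u}x_0\right|^2$ directly for general $m$ ``with some elementary algebra,'' you avoid the general-$m$ norm computation altogether: feeding $\eta=x^0$ into Proposition \ref{prop:N.const} gives $c=\bigl(\dim(\g_0)-\tfrac{n^2}{u}\bigr)-24N_m(\mathbf{a}_{x^0})$, the cancellation $\dim(\g_0)-\tfrac{n^2}{u}=s-1-\tfrac{s^2}{u}$ kills all $m$-dependence, and the constant is fixed at $m=0$ via the strange formula, with the factorisation $u-(u-s)^2(s+1)=-(u-s-1)(u+us-s^2)$ checking out (e.g.\ $s=2$, $u=5$ returns $c=-22/5$, the $(2,5)$ minimal model value). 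The appeal to Proposition \ref{prop:N.const} is legitimate: its proof is a pure Gram-matrix computation valid for any non-decreasing integer list, not only lists arising from $\check{P}_{+,f}^u$, and the paper itself records that $x^0$ corresponds to the fixed list $(1,2,\ldots,s-1,s,\ldots,s)$ for every $m$ --- this is precisely how Corollary \ref{cor:same.confdim} uses it. The only point deserving a sentence in a careful write-up is the edge case $m=0$, where the index $mi+a_i$ reaches $s=n$ and one must read $\varpi_n=0$; this is harmless since the relevant Gram-matrix entries vanish, and it is consistent with $x^0=\rho$ for the principal grading. What your organisation buys is that the $m$-independence is inherited from a result already proved rather than re-derived, at the modest cost of a separate evaluation step at $m=0$; the paper's direct computation obtains the closed form and the $m$-independence simultaneously but leaves the algebra unrecorded.
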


\begin{proof}
The general formula for the central charge {\cite[Theorem 2.2]{KRW03}} at level $k = h^\vee + p/u$, specialised to the case of a nilpotent with even grading and grading element $x_0$, is
\begin{align}\label{eq:c.formula.general}
c = \dim(\g_0) - 12 \frac{u}{p} \left|\rho - \frac{p}{u}x_0\right|^2.
\end{align}
For the left adjusted pyramid we read off $\dim(\g_0) = sm(m+1) + (u-s)m(m-1) + n-1$. From this and from the explicit description of $x_0$, with some elementary algebra we deduce the formula in the theorem statement.
\end{proof}

\begin{cor}
For fixed coprime $u, s$ and varying $m \in \Z_+$, the $T$-matrices of the vertex algebras $W(\sll_n[u^m, s], n/s)$ coincide.
\end{cor}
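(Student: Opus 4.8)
The plan is to reduce everything to the two preceding results, since the $T$-matrix of the modular tensor category attached to a lisse rational self-dual vertex algebra is the diagonal matrix whose entry indexed by an irreducible module $M$ is $e^{2\pi i (h_M - c/24)}$, where $h_M$ is the conformal dimension of $M$ and $c$ is the central charge. Thus, to show that the $T$-matrices of $W(\sll_n[u^m, s], n/s)$ for varying $m$ coincide under the bijection of Theorem \ref{thm:bijections}, it suffices to establish that (a) each conformal dimension is matched to an equal one under the bijection, and (b) the central charge $c$ does not depend on $m$. Because the matrices are diagonal, no further comparison of off-diagonal structure is needed.

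Both ingredients are already in hand, so the argument is an assembly. For (a), Corollary \ref{cor:same.confdim} asserts precisely that the bijection of Theorem \ref{thm:bijections} preserves conformal dimensions, so that corresponding modules $M$ and $M'$ satisfy $h_M = h_{M'}$. For (b), Proposition \ref{prop:c.formula} gives the closed formula $c = -(s-1)(u-s-1)(u+us-s^2)/u$, which is manifestly a function of $u$ and $s$ alone. Consequently the quantity $h_M - c/24$ is unchanged under the bijection, hence so is each diagonal entry $e^{2\pi i(h_M - c/24)}$, and the two $T$-matrices agree. I would also remark that self-duality, needed to invoke Huang's modular tensor structure and thereby make sense of the $T$-matrix, is supplied by Corollary \ref{cor:self.dual}.

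The substantive work is therefore entirely contained in the earlier statements, and the main difficulty lies there rather than in the present bookkeeping. Specifically, the heart of (a) is Proposition \ref{prop:N.const}, whose proof requires the explicit expansion of $N_m(\mathbf{a})$ using the Freudenthal--de Vries strange formula and the Gram matrix $F_{ij} = \min\{i,j\} - ij/n$ of $\sll_n$, culminating in the cancellation of all $m$-dependence in favour of the invariants $S$, $S_2$, $M$; and (b) rests on the good-even-grading computation of $\dim(\g_0) = sm(m+1) + (u-s)m(m-1) + n-1$ and of $x^0$ feeding into the Kac--Roan--Wakimoto central charge formula \eqref{eq:c.formula.general}. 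Granting those inputs, the present corollary is immediate, and I expect no genuine obstacle at this final step.
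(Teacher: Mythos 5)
Your proposal is correct and follows exactly the paper's own route: the paper deduces the corollary immediately from Corollary \ref{cor:same.confdim} (equality of conformal dimensions under the bijection) and Proposition \ref{prop:c.formula} ($m$-independence of the central charge), just as you do. Your additional remarks on the diagonal form of $T$ and on self-duality via Corollary \ref{cor:self.dual} are accurate glosses, not deviations.
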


\begin{proof}
Follows immediately from Corollary \ref{cor:same.confdim} and Proposition \ref{prop:c.formula}.
\end{proof}

\end{document}